\documentclass[12pt]{amsart}
\usepackage[top=1in, bottom=1in, margin=1in]{geometry}
\usepackage{amsfonts,amsmath,amssymb,amscd,amsthm}
\usepackage[all]{xy}
\usepackage{enumerate}
\usepackage{verbatim}

\newtheorem{theorem}{Theorem}[section]
\newtheorem{prop}[theorem]{Proposition}
\newtheorem{lem}[theorem]{Lemma}
\newtheorem{cor}[theorem]{Corollary}
\newtheorem{rem}[theorem]{Remark}
\newtheorem{mydef}[theorem]{Definition}

\begin{document}

\title{Borel and Continuous Systems of Measures}
\author{A. Censor}
\address{Aviv Censor, Department of Mathematics,
University of California at Riverside, Riverside, CA, 92521, U.S.A.}
\email{avivc@math.ucr.edu}
\author{D. Grandini}
\address{Daniele Grandini, Department of Mathematics,
University of California at Riverside, Riverside, CA, 92521, U.S.A.}
\email{daniele@math.ucr.edu}
\date{\today}
\begin{abstract}
We study Borel systems and continuous systems of measures, with a focus on mapping properties: compositions, liftings, fibred products and disintegration. Parts of the theory we develop can be derived from known work in the literature,
and in that sense this paper is of expository nature. However, we put the above notions in the spotlight and provide a self-contained, purely measure-theoretic, detailed and thorough investigation of their properties, and in that aspect our paper enhances and complements the existing literature. Our work constitutes part of the necessary theoretical framework for categorical constructions involving measured and topological groupoids with Haar systems, a line of research we pursue in separate papers.
\end{abstract}
\keywords{System of measures; Borel system of measures; lifting; fibred product; disintegration; groupoid; Haar system.} \subjclass[2010]{28A50; 
22A22}
\maketitle

\section{Introduction}

We first give an overview of the contents of this paper. This is followed by a discussion of the nature of our work and its relation to the existing literature.

\subsection{Overview}
Our treatment of Borel systems of measures (BSMs) and continuous systems of measures (CSMs) in this paper is very general. Loosely speaking, a system of measures on a map $\pi:X \rightarrow Y$ is a family of measures $\lambda^{\bullet} = \{ \lambda^y \}_{y \in Y}$ on $X$, such that each $\lambda^y$ is concentrated on $\pi^{-1}(y)$. This can be made precise when the nature of $X$, $Y$ and $\pi$ is specified (e.g. topological spaces with a continuous map, Borel spaces with a Borel map), leading to appropriate assumptions on the measures $\{ \lambda^y \}$. We will denote a map $\pi:X \rightarrow Y$ admitting a system of measures $\lambda^{\bullet}$ by the diagram $\xymatrix{X\ar [rr]^{\pi}_{\lambda^{\bullet}}&&Y}$.

In the spotlight of our work are mapping properties of systems of measures. We establish terminology, notation and basic properties of systems of measures in Section \ref{sec:SOM}. Then, in Section \ref{sec:composition}, we study \emph{composition} of systems, corresponding to the following diagram $$\xymatrix{X\ar [rr]^{p}_{\alpha^{\bullet}}&&Y\ar [rr]^{q}_{\beta^{\bullet}}&&Z}$$ The composition $(\beta\circ\alpha)^{\bullet}$ is defined for any Borel set $E \subseteq X$ by $(\beta\circ\alpha)^{z}(E) =\int_Y \alpha^{y}(E) \ d\beta^z(y)$ (Definition \ref{def:composition BSM}).

In Section \ref{sec:lifting} we treat the notion of \emph{lifting}, namely producing a system of measures $(q^*\alpha)^{\bullet}$ on $\pi_Y$ in the following pull-back diagram:
$$\xymatrix{X * Y \ar [dd]_{\pi_X}\ar [rr]^{\pi_Y}&&Y\ar [dd]_{q}\\\\
X\ar [rr]^{p}_{\alpha^{\bullet}}&&Z}$$ The lifting is given by $(q^*\alpha)^y=\alpha^{q(y)}\times \delta_y$ (Definition \ref{def:lifting BSM}).

Section \ref{sec:fibred products} deals with the \emph{fibred product}, which is a system of measures $(\gamma_X * \gamma_Y)^{\bullet}$ on the map $f*g$ in the following diagram:
\[
\xy 0;<.2cm,0cm>:
(20,20)*{X_2 * Y_2}="1";
(40,20)*{Y_2}="2";
(20,0)*{X_2}="3";
(40,0)*{Z}="4";
(10,10)*{X_1 * Y_1}="5";
(30,10)*{Y_1}="6";
(10,-10)*{X_1}="7";
(30,-10)*{Z}="8";
{"1"+CR+(.5,0);"2"+CL+(-.5,0)**@{-}?>*{\dir{>}}?>(.5)+(0,1)*{\scriptstyle \pi_{Y_2}}};
{"5"+CR+(.5,0);"6"+CL+(-.5,0)**@{-}?>*{\dir{>}}?>(.75)+(0,1)*{\scriptstyle \pi_{Y_1}}};
{"3"+CR+(.5,0);"4"+CL+(-10,0)**@{-}};
{"3"+CR+(9,0);"4"+CL+(-.5,0)**@{-}?>*{\dir{>}}?>(.5)+(0,1)*{\scriptstyle p_2}};
{"7"+CR+(.5,0);"8"+CL+(-.5,0)**@{-}?>*{\dir{>}}?>(.5)+(0,1)*{\scriptstyle p_1}};
{"1"+CD+(0,-.5);"3"+CU+(0,9.6)**@{-}} ;
{"1"+CD+(0,-9.8);"3"+CU+(0,.5)**@{-}?>*{\dir{>}}?>(.5)+(-1.5,0)*{\scriptstyle \pi_{X_2}}} ;
{"2"+CD+(0,-.5);"4"+CU+(0,.5)**@{-}?>*{\dir{>}}?>(.5)+(-1,0)*{\scriptstyle q_2}} ;
{"5"+CD+(0,-.5);"7"+CU+(0,.5)**@{-}?>*{\dir{>}}?>(.5)+(-1.5,0)*{\scriptstyle \pi_{X_1}}} ;
{"6"+CD+(0,-.5);"8"+CU+(0,.5)**@{-}?>*{\dir{>}}?>(.25)+(-1,0)*{\scriptstyle q_1}} ;
{"5"+C+(1.4,1.4);"1"+C+(-1.4,-1.4)**@{-}?>*{\dir{>}}?>*{\dir{>}}?>(.5)+(-2,0)*{\scriptstyle f*g}} ;
{"6"+C+(1.4,1.4);"2"+C+(-1.4,-1.4)**@{-}?>*{\dir{>}}?>(.5)+(-1,0)*{\scriptstyle g}?>(.5)+(1.5,0)*{\scriptstyle \gamma_Y^{\bullet}}} ;
{"7"+C+(1.4,1.4);"3"+C+(-1.4,-1.4)**@{-}?>*{\dir{>}}?>(.5)+(-1,0)*{\scriptstyle f}?>(.5)+(1.5,0)*{\scriptstyle \gamma_X^{\bullet}}} ;
{"8"+C+(1.4,1.4);"4"+C+(-1.4,-1.4)**@{-}?>*{\dir{>}}?>(.5)+(-1.5,0)*{\scriptstyle id}};
\endxy
\]
The fibred product is defined by $\left(\gamma_X * \gamma_Y\right)^{(x_2,y_2)}=\gamma_X^{x_2} \times \gamma_Y^{y_2}$ (Definition \ref{def:fibred product BSM}).

Section \ref{sec:disintegration} explores the concept of \emph{disintegration}, a most valuable tool in applications: If $(X,\mu)$ and $(Y,\nu)$ are measure spaces, and $f:X \rightarrow Y$ is a Borel map, then a system of measures $\gamma^{\bullet}$ on $f$ is a disintegration of $\mu$ with respect to $\nu$ if $ \mu (E) = \int_Y \gamma^y (E) d\nu (y)$ for every Borel set $E \subseteq X$.

We conclude, in section \ref{sec:groupoids}, with a brief discussion of systems of measures for groupoids, in particular Haar systems.

\subsection{Broad perspective}
While our interest in systems of measures originated from our work with groupoids, in this paper we develop the theory from elementary principles and our approach is purely measure theoretic. This is in contrast to many references where the subject has been studied from very specialized perspectives. Systems of measures (also called $\pi$-systems or kernels) appear in various mathematical contexts, and have been investigated from different viewpoints in the literature. For example, a general introduction to the topic can be found in Bourbaki \cite{bourbaki}, which takes a very functional analytic approach.

The primary goal we set for this paper was to collect and clarify the categorically-flavored constructions that we needed, details of which we managed to trace only in part in the functional analysis, probability and groupoid literature. We do not claim to present an exhaustive account of the literature on systems of measures.

The world of groupoids, which motivated our study, is a discipline in which systems of measures play a fundamental role. Most notably, a Haar system for a groupoid $G$ is essentially a left-invariant system of measures on the range map $r:G \rightarrow G^{(0)}$, which generalizes the notion of a Haar measure on a group. In particular, Haar systems are a crucial ingredient for integration on groupoids, for groupoid representations, and for constructing groupoid $C^*$-algebras. Beyond Haar systems, maps between groupoids naturally give rise to systems of measures as well.

In the groupoid literature, systems of measures have been studied extensively, for example by Connes in \cite{connes-noncommutative-integration} (using the term ``kernel", noyau in French), by Muhly in \cite{muhly-book-unpublished} and by Renault and Anantharaman-Delaroche in \cite{renault-anantharaman-delaroche} (using the term ``$\pi$-systems"). The scope of our current study of systems of measures was therefore restricted to mapping properties which were essential for specific applications that came up in our work. Some of the results presented here appear scattered across the literature, which is why we opted to give a self contained treatment, including all definitions, and full proofs whenever lacking precise references. We point out that some of the formulas and diagrams which we make explicit, can be found in \cite{renault-anantharaman-delaroche}. In fact, significant parts of the theory are implicit in, and can be non-trivially derived from the aforementioned groupoid references, as well as other works of Renault (e.g. \cite{renaultJOT}), Ramsay (e.g. \cite{ramsay71}) and others. We single out a couple of such sources which we refer the specialized reader to: The first is Appendix A.1 of \cite{renault-anantharaman-delaroche} on transverse measure theory, which builds on Connes' work, starting with \cite{connes-noncommutative-integration}. The second is a fibred product construction beginning on page 265 of \cite{ramsay71}. A detailed discussion of how to extract some of our results from these is beyond the scope of this paper.

This paper provides tools and techniques that allow us to form certain categorical constructions with topological groupoids, which we shall present in separate papers. Primarily, we were interested in forming the so-called ``weak pull-back" of a diagram of topological groupoids, each endowed with a Haar system and a quasi-invariant measure on its unit space \cite{WPB}. The weak pull-back is a key ingredient for degroupoidification \`a la Baez and Dolan \cite{baez-hoffnung-walker}, which together with Christopher Walker we are currently generalizing from the discrete setting to the realm of topology and measure theory.

\section{Systems of measures}\label{sec:SOM}

\textbf{\emph{Throughout, we will assume all topological spaces to be second countable and $\mathbf{T_1}$. We require spaces to also be locally compact and Hausdorff whenever dealing with continuous systems of measures, as well as throughout Section \ref{sec:disintegration}.}} Measures will always be positive and Borel. Unless stated otherwise, continuous functions will be complex-valued, whereas Borel functions are allowed to take infinite values. \\

We first recall the definition of the \textbf{support} of a Borel measure $\mu$ on a space $X$: $$supp(\mu)=\{x \in X: \mu(A)> 0\ \ \text{for every open neighborhood } A \text{ of } x \}.$$ We say that the measure $\mu$ is \textbf{concentrated} on a subset $S \subseteq X$ if $\mu(X \setminus S)=0$.

\begin{lem}\label{lem:support}
The support is a closed subset of $X$. Moreover, if $S$ is a closed subset of $X$, then $supp(\mu)\subseteq S$ if and only if the measure $\mu$ is concentrated on $S$.
\end{lem}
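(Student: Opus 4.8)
The plan is to handle the three logical components separately: closedness of the support, and each direction of the stated equivalence. For closedness, I would show that the complement $X \setminus supp(\mu)$ is open. By definition, a point $x$ fails to lie in $supp(\mu)$ precisely when it admits an open neighborhood $A$ with $\mu(A)=0$. But then every point of $A$ has the same $A$ as a witnessing null neighborhood, so $A \subseteq X \setminus supp(\mu)$. Thus the complement of the support is a union of open sets, hence open, and $supp(\mu)$ is closed.

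For the equivalence, first I would prove the easy direction: if $\mu$ is concentrated on the closed set $S$, then $supp(\mu) \subseteq S$. Taking any $x \notin S$, the set $X \setminus S$ is open (since $S$ is closed), contains $x$, and satisfies $\mu(X \setminus S)=0$ by hypothesis. Hence $x$ admits a null open neighborhood, so $x \notin supp(\mu)$. Contrapositively, $supp(\mu) \subseteq S$.

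The converse direction is where the main work lies. Assuming $supp(\mu) \subseteq S$ with $S$ closed, I want $\mu(X \setminus S)=0$. Since $X \setminus S \subseteq X \setminus supp(\mu)$, it suffices to show the open set $U := X \setminus supp(\mu)$ is itself $\mu$-null. By the definition of the support, each point of $U$ has an open neighborhood of measure zero, so $U$ is covered by a family of $\mu$-null open sets. The difficulty is that this cover may be uncountable, and an uncountable union of null sets need not be null. This is precisely where the standing hypothesis that $X$ is second countable becomes essential: second countability implies the Lindel\"of property, so the cover admits a countable subcover $\{A_n\}_{n}$ of $U$. Then by countable subadditivity $\mu(U) \leq \sum_n \mu(A_n) = 0$, giving $\mu(X \setminus S) \leq \mu(U) = 0$ and completing the proof. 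I expect the extraction of a countable subcover to be the crux; everything else is a direct unwinding of definitions.
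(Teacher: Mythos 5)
Your proposal is correct and follows essentially the same route as the paper: the closedness argument and the easy direction are identical, and for the converse you invoke the Lindel\"of property of second countable spaces to extract a countable subcover of null open sets, whereas the paper inlines exactly that reduction by refining each null neighborhood to an element of a fixed countable basis before applying countable subadditivity. The two versions of the crux step are interchangeable, so there is no substantive difference.
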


\begin{proof}
Take $x\notin supp(\mu)$. Then $x$ has an open neighborhood $A$ such that $\mu(A)=0$. Furthermore, $A \cap supp(\mu) = \emptyset$. This shows that the complement of $supp(\mu)$ is open.

For the second part, note first that $supp(\mu)\subseteq S$ if and only if
$$x\notin S \quad \Rightarrow \quad \exists A\subseteq X\text{ open }:\ x\in A,\ \mu(A)=0.$$
Assume that $\mu(X \setminus S)=0$. Since the complement $X \setminus S$ is open, $A=X \setminus S$ satisfies the above statement for any $x\notin S$ and it follows that $supp(\mu)\subseteq S$. \\
Viceversa, assume $supp(\mu)\subseteq S$. Fix a countable basis ${\mathcal B}$ for the topology of $X$. Then the following statement is true:
$$x\notin S\quad\Rightarrow\quad \exists A_x\in {\mathcal B}:\ x\in A_x,\ \mu(A_x)=0.$$ It follows that $X \setminus S \subseteq \bigcup_{x\notin S}A_x$. But this union consists of countably many distinct elements of the basis ${\mathcal B}$, so we can invoke countable subadditivity to obtain
$\mu(X \setminus S)\leq \sum_{x\notin S}\mu(A_x)=0.$
\end{proof}

\begin{mydef}
Let $\pi:X \rightarrow Y$ be a Borel map. A \textbf{system of measures} on $\pi$ is a family of measures $\lambda^{\bullet} = \{ \lambda^y \}_{y \in Y}$ such that:
\begin{enumerate}
\item Each $\lambda^y$ is a Borel measure on $X$;
\item For every $y$, $\lambda^y$ is concentrated on $\pi^{-1}(y)$.
\end{enumerate}
\end{mydef}
\noindent If the map $\pi:X \rightarrow Y$ is continuous (or proper if the spaces are $T_2$), then condition \emph{(2)} is equivalent to
\begin{enumerate}
\item[\emph{(2')}] \textit{For every} $y$, $supp(\lambda^y) \subseteq \pi^{-1}(y)$.
\end{enumerate}
This follows immediately from Lemma \ref{lem:support} since $\pi^{-1}(y)$ is a closed subset of $X$.

We will denote a map $\pi:X \rightarrow Y$ admitting a system of measures $\lambda^{\bullet}$ by the diagram
$\xymatrix{X\ar [rr]^{\pi}_{\lambda^{\bullet}}&&Y}$.

Trivially, when $Y$ is a singleton $\{y\}$, a system of measures on the projection $\pi:X \rightarrow \{y\}$ is merely a Borel measure on $X$. This obvious observation will be of use in the sequel.

\begin{mydef}
We will say that a system of measures $\lambda^{\bullet}$ is:
\begin{itemize}
\item
\textbf{positive on open sets} if $\lambda^y(A) > 0$ for every $y \in Y$ and for every open set $A \subseteq X$ such that $A \cap \pi^{-1}(y) \neq \emptyset$.
\item
\textbf{locally bounded} if for any $x \in X$ there exists a neighborhood $U_x$ and a constant $C >0$ such that $\lambda^y(U_x) < C$ for any $y \in Y$.
\end{itemize}
\end{mydef}

A system of measures will be called \emph{bounded on compact sets} if for any compact set $K \subseteq X$, $\lambda^{\bullet}(K)$ is a bounded function on $Y$.
In general, it is not hard to see that being locally bounded implies being bounded on compact sets. If $X$ is assumed to be locally compact, the converse is also trivially true. Our discussion of this property will usually be restricted to the setting of locally compact spaces, where the two notions coincide.

\begin{lem}\label{lem:positive iff full support}
Assume that the map $\pi:X\rightarrow Y$ is continuous. A system of measures $\lambda^{\bullet}$ on $\pi$ is positive on open sets if and only if $supp(\lambda^y) = \pi^{-1}(y)$ for every $y \in Y$.
\end{lem}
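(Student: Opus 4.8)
The plan is to unwind the two definitions and to exploit the fact that, since $\pi$ is continuous, the inclusion $supp(\lambda^y) \subseteq \pi^{-1}(y)$ is already available for free: it is exactly condition \emph{(2')}, which by Lemma \ref{lem:support} is equivalent to $\lambda^y$ being concentrated on the closed set $\pi^{-1}(y)$, part of the definition of a system of measures. Consequently, proving the equality $supp(\lambda^y) = \pi^{-1}(y)$ reduces to establishing the reverse inclusion $\pi^{-1}(y) \subseteq supp(\lambda^y)$, and it is precisely this inclusion that I expect to be a verbatim translation of positivity on open sets.

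For the forward implication, I would assume $\lambda^{\bullet}$ is positive on open sets and fix $y \in Y$. Given the free inclusion above, it suffices to show $\pi^{-1}(y) \subseteq supp(\lambda^y)$. So I would take $x \in \pi^{-1}(y)$ and let $A$ be an arbitrary open neighborhood of $x$; then $x \in A \cap \pi^{-1}(y)$, so this intersection is nonempty, and positivity on open sets yields $\lambda^y(A) > 0$. Since $A$ was an arbitrary open neighborhood of $x$, the definition of the support gives $x \in supp(\lambda^y)$.

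For the converse, I would assume $supp(\lambda^y) = \pi^{-1}(y)$ for every $y$ and verify positivity directly. Fixing $y$ and an open set $A$ with $A \cap \pi^{-1}(y) \neq \emptyset$, I would pick a point $x \in A \cap \pi^{-1}(y) = A \cap supp(\lambda^y)$. Then $x \in supp(\lambda^y)$ and $A$ is an open neighborhood of $x$, so the definition of the support forces $\lambda^y(A) > 0$, which is exactly what positivity on open sets demands.

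I do not anticipate a genuine obstacle here: the statement is essentially a reformulation, with each direction obtained by applying the definition of the support to an arbitrary open neighborhood. The only point requiring care is to remember that the inclusion $supp(\lambda^y) \subseteq \pi^{-1}(y)$ does \emph{not} follow from positivity but rather from the defining concentration property together with continuity (via Lemma \ref{lem:support}); continuity of $\pi$ enters the argument solely through this inclusion.
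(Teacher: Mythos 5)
Your proof is correct and follows essentially the same route as the paper: the forward direction shows $\pi^{-1}(y)\subseteq supp(\lambda^y)$ from positivity and then invokes condition \emph{(2')} (concentration plus continuity of $\pi$, via Lemma \ref{lem:support}) for the reverse inclusion, and the converse picks a point of $A\cap\pi^{-1}(y)$ and applies the definition of the support, exactly as the paper does. Your closing remark correctly identifies the only subtle point, namely that continuity enters solely through the inclusion $supp(\lambda^y)\subseteq\pi^{-1}(y)$.
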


\begin{proof}
Suppose that $\lambda^{\bullet}$ is positive on open sets. For any $x\in \pi^{-1}(y)$ and any open neighborhood $A$ of $x$, we have that $A \cap \pi^{-1}(y) \neq \emptyset$ and thus $\lambda^{y}(A)>0$. Therefore, $x\in\ supp(\lambda^y)$.  This proves that $\pi^{-1}(y)\subseteq\ supp(\lambda^y)$. Condition \emph{(2')} above implies that $supp(\lambda^y) = \pi^{-1}(y)$.

Conversely, assume that $supp(\lambda^y) = \pi^{-1}(y)$ and let $A\subseteq X$ be an open subset satisfying $A\cap \pi^{-1}(y)\neq\emptyset$. Pick $x\in A\cap \pi^{-1}(y)$. Since $x\in supp(\lambda^y)$ and $A$ is an open neighborhood of $x$, it follows that $\lambda^y(A)>0$. Therefore, $\lambda^{\bullet}$ is positive on open sets.
\end{proof}

\begin{mydef}
A system of measures $\lambda^{\bullet}$ on a continuous map $\pi:X \rightarrow Y$ will be called
a \textbf{continuous system of measures} or \textbf{CSM} if for every non-negative continuous compactly supported function $0 \leq f\in C_c(X)$, the map $\displaystyle y \mapsto \int_X f(x)d \lambda^y (x)$ is a continuous function on $Y$.
\end{mydef}

Note that implicit in the above definition is the assumption on $\lambda^{\bullet}$ that $\int_X f(x)d \lambda^y (x)$ is finite for all $y$ and for any $0 \leq f\in C_c(X)$. This implies that $\int_X f(x)d \lambda^y (x)$ is finite for \textit{any} complex-valued function $f\in C_c(X)$. Hence, a CSM can be defined, equivalently, by requiring the map $y \mapsto \int_X f(x)d \lambda^y (x)$ to be a continuous function on $Y$ for any complex-valued function $f\in C_c(X)$.

\begin{mydef}
A system of measures $\lambda^{\bullet}$ on a Borel map $\pi:X \rightarrow Y$ is called
a \textbf{Borel system of measures} or \textbf{BSM} if for every Borel subset $E\subseteq X$, the function $\lambda^{\bullet}(E):Y\rightarrow [0,\infty]$ given by $y\mapsto \lambda^{y}(E)$ is a Borel function.
\end{mydef}

In the sequel it will be implicit that whenever a map $\pi:X \rightarrow Y$ admits a BSM, it is a Borel map, and if it admits a CSM, it is a continuous map. Also, recall that in the CSM context, spaces are assumed to be locally compact and Hausdorff.

\begin{lem}\label{lem:equiv_BSM}
A system of measures $\lambda^{\bullet}$ on $\pi:X\rightarrow Y$ is a BSM if and only if for every nonnegative Borel function $f:X \rightarrow [0,\infty]$, the map $\displaystyle y \mapsto \int f(x)d \lambda^y (x)$ is a Borel function on $Y$.
\end{lem}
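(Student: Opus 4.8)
The plan is to run the standard measure-theoretic bootstrapping argument, building up from indicator functions to simple functions and then to general nonnegative Borel functions via monotone convergence. The backward implication is immediate: if the stated integral condition holds for every nonnegative Borel $f$, then applying it to the indicator function $f=\chi_E$ of an arbitrary Borel set $E\subseteq X$ gives $\int \chi_E(x)\,d\lambda^y(x)=\lambda^y(E)$, so $y\mapsto\lambda^y(E)$ is Borel, which is exactly the defining property of a BSM. No further work is needed for this direction.

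For the forward implication, assume $\lambda^{\bullet}$ is a BSM. First, for indicator functions $\chi_E$ with $E\subseteq X$ Borel, the map $y\mapsto\int\chi_E\,d\lambda^y=\lambda^y(E)$ is Borel by hypothesis. Next, for a nonnegative simple Borel function $s=\sum_{i=1}^n c_i\chi_{E_i}$, with $c_i\geq 0$ and each $E_i$ Borel, linearity of the integral gives $\int s\,d\lambda^y=\sum_{i=1}^n c_i\lambda^y(E_i)$. Since a finite nonnegative linear combination of Borel functions is again Borel, the map $y\mapsto\int s\,d\lambda^y$ is Borel.

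Finally, for a general nonnegative Borel function $f:X\rightarrow[0,\infty]$, I would choose an increasing sequence of nonnegative simple Borel functions $s_n\nearrow f$ converging pointwise (a standard fact about nonnegative measurable functions). For each fixed $y$, the Monotone Convergence Theorem yields $\int s_n\,d\lambda^y\nearrow\int f\,d\lambda^y$. Thus $y\mapsto\int f\,d\lambda^y$ is the pointwise limit of the Borel functions $y\mapsto\int s_n\,d\lambda^y$, and since a pointwise limit of Borel functions into $[0,\infty]$ is Borel, the desired conclusion follows. This argument presents no genuine obstacle; the only points requiring care are the bookkeeping in the extended arithmetic of $[0,\infty]$-valued functions and the fact that the appeal to monotone convergence is made pointwise in $y$, both of which are routine.
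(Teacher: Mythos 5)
Your proposal is correct and follows essentially the same route as the paper: the backward direction via indicator functions, and the forward direction via the standard bootstrapping from indicators to nonnegative simple functions to general nonnegative Borel functions using the Monotone Convergence Theorem pointwise in $y$.
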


\begin{proof}
Assume that $ y \mapsto \int f(x) d \lambda^y (x)$ is Borel for any Borel function $f:X \rightarrow [0,\infty]$, and let $E\subseteq X$ be a Borel subset. Then the function $ y \mapsto \int \chi_{_E}(x) d \lambda^y (x)= \lambda^y(E)$ is Borel.

Now suppose $\lambda^{\bullet}$ is a BSM. The following argument is standard. If $ s=\sum_{i=1}^n r_i\chi_{_{E_i}}$ is a nonnegative simple function on $X$, then the map $y \mapsto \int s(x) d \lambda^y (x)=\sum_{i=1}^n r_i\lambda^y(E_i)$ is Borel, being a linear combination of the Borel functions $y \mapsto \lambda^y(E_i)$. Now let $f$ be any nonnegative Borel function. There exists an increasing sequence of nonnegative simple functions $s_n$ that converges to $f$ pointwise on $X$. From the Monotone Convergence Theorem, $\int f(x)d \lambda^y (x)=\int \lim_{n\longrightarrow \infty}s_n(x)d \lambda^y (x)=\lim_{n \longrightarrow \infty}\int s_n(x)d \lambda^y (x)$. Therefore the function $y \mapsto \int f(x)d \lambda^y (x)$ is a limit of Borel functions and thus Borel.
\end{proof}

\begin{lem}
Let $\lambda^{\bullet}$ be a BSM. For any function $\displaystyle f\in \bigcap_{y\in Y}L^1(\lambda^y)$, the map $\displaystyle y \mapsto \int f(x)d \lambda^y (x)$ is Borel.
\end{lem}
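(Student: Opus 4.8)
The plan is to reduce to the nonnegative case already settled by Lemma \ref{lem:equiv_BSM}, by splitting $f$ into its nonnegative Borel pieces, and then to use the $L^1$ hypothesis to guarantee that these pieces can be recombined without ever encountering the ill-defined expression $\infty-\infty$. Implicit here, and consistent with the standing conventions, is that $f$ is a complex-valued Borel function on $X$, so that each piece below is again Borel.

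First I would decompose $f$ as $f = (u^+ - u^-) + i(v^+ - v^-)$, where $u=\mathrm{Re}(f)$ and $v=\mathrm{Im}(f)$, and $u^+,u^-,v^+,v^-$ denote the usual positive and negative parts. Each of these four functions is a nonnegative Borel function on $X$, so Lemma \ref{lem:equiv_BSM} applies to each and tells me that the four maps $y \mapsto \int u^+\,d\lambda^y$, $y \mapsto \int u^-\,d\lambda^y$, $y \mapsto \int v^+\,d\lambda^y$ and $y \mapsto \int v^-\,d\lambda^y$ are all Borel functions on $Y$ with values in $[0,\infty]$. Next I would invoke the hypothesis $f\in\bigcap_{y\in Y}L^1(\lambda^y)$: for each fixed $y$ the finiteness of $\int |f|\,d\lambda^y$ forces each of these four integrals to be finite, since $u^+,u^-,v^+,v^-\leq |f|$ pointwise. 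Hence the four Borel functions are in fact finite-valued, and the identity
\[
\int f\,d\lambda^y = \Big(\int u^+\,d\lambda^y - \int u^-\,d\lambda^y\Big) + i\Big(\int v^+\,d\lambda^y - \int v^-\,d\lambda^y\Big)
\]
exhibits $y \mapsto \int f\,d\lambda^y$ as a genuine (finite) linear combination of Borel functions, which is therefore Borel.

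The step I expect to require the most care is precisely the finiteness argument: it is the requirement that $f$ lie in $L^1(\lambda^y)$ for \emph{every} $y$ (not merely for almost every $y$, and not merely that $\int f\,d\lambda^y$ converge conditionally) that rules out the ill-defined difference of two infinite values and makes the pointwise-in-$y$ decomposition legitimate. Everything else is a routine appeal to Lemma \ref{lem:equiv_BSM} together with the fact that finite linear combinations of Borel functions are Borel.
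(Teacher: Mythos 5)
Your proposal is correct and follows essentially the same route as the paper: decompose $f$ into real and imaginary parts and these into their positive and negative parts, apply Lemma \ref{lem:equiv_BSM} to each nonnegative Borel piece, and use the hypothesis $f\in\bigcap_{y\in Y}L^1(\lambda^y)$ to guarantee all four integrals are finite so the recombination is well defined. Your explicit remark that the pointwise bounds $u^+,u^-,v^+,v^-\leq |f|$ deliver this finiteness is exactly the point the paper compresses into the phrase ``Borel and finite.''
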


\begin{proof}
The proof is a routine argument stemming from Lemma \ref{lem:equiv_BSM}. We will denote $F_f(y) = \int_X f(x)d \lambda^y (x)$. Assume first that $f$ is real-valued. Write $f=f_+-f_-$, where $f_+,f_-$ are respectively the positive and negative parts of $f$. By Lemma \ref{lem:equiv_BSM}, the functions $F_{f_+}(y)$ and $F_{f_-}(y)$
are both Borel and finite, which implies that the function $F_{f}(y) = F_{f_+}(y) - F_{f_-}(y)$
is Borel. For complex-valued $f$, write $f = f_1 + if_2$, and $F_{f}(y) = F_{f_1}(y) + iF_{f_2}(y)$ is Borel.
\end{proof}

\begin{lem}\label{lem:CSM_support}
Assume that $\lambda^{\bullet}$ is a CSM on $\pi:X \rightarrow Y$, and let $f \in C_c(X)$. Let $F:Y \rightarrow \mathbb{C}$ be the continuous function on $Y$ given by $\displaystyle F(y) = \int_X f(x)d \lambda^y (x)$. Then $supp(F) \subseteq \pi(supp(f))$.
\end{lem}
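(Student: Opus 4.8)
The plan is to reduce the support inclusion to a pointwise vanishing statement, and then upgrade it by a closure argument. Concretely, I would first prove the set-theoretic containment $\{y : F(y) \neq 0\} \subseteq \pi(supp(f))$ by establishing its contrapositive: if $y \notin \pi(supp(f))$, then $F(y) = 0$. So fix such a $y$. Since $y$ is not in the image of $supp(f)$ under $\pi$, no point of $supp(f)$ can lie in the fiber $\pi^{-1}(y)$; that is, $supp(f) \cap \pi^{-1}(y) = \emptyset$. In particular $f$ vanishes identically on $\pi^{-1}(y)$, because any $x \in \pi^{-1}(y)$ then lies outside $supp(f)$ and hence satisfies $f(x) = 0$.

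Next I would compute $F(y) = \int_X f \, d\lambda^y$ by splitting the integral over $\pi^{-1}(y)$ and its complement $X \setminus \pi^{-1}(y)$. Over the fiber the integrand is zero by the previous paragraph, so that contribution vanishes. Over the complement the measure itself is null: condition (2) in the definition of a system of measures says precisely that $\lambda^y$ is concentrated on $\pi^{-1}(y)$, i.e. $\lambda^y(X \setminus \pi^{-1}(y)) = 0$. Hence both pieces vanish and $F(y) = 0$, which gives the desired containment $\{y : F(y) \neq 0\} \subseteq \pi(supp(f))$.

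Finally I would pass to closures. By definition $supp(F)$ is the closure of $\{y : F(y) \neq 0\}$, so it suffices to know that $\pi(supp(f))$ is itself closed, for then applying the closure operation to the inclusion above yields $supp(F) \subseteq \overline{\pi(supp(f))} = \pi(supp(f))$. This is exactly where the standing hypotheses of the CSM setting are used: $f \in C_c(X)$ forces $supp(f)$ to be compact, $\pi$ is continuous, so $\pi(supp(f))$ is compact, and since $Y$ is Hausdorff a compact set is closed. I expect this closedness to be the only genuine subtlety, since the measure-theoretic vanishing of $F(y)$ off $\pi(supp(f))$ is immediate from concentration; without compactness and the Hausdorff assumption the image need not be closed and the support inclusion could fail, so the compact-image argument is the crux that legitimizes the final closure step.
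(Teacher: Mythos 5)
Your proof is correct and takes essentially the same route as the paper's: both deduce from the concentration of $\lambda^y$ on $\pi^{-1}(y)$ that $F$ vanishes off the image of the support of $f$, and then use continuity of $\pi$, compactness of $supp(f)$, and the Hausdorff property of $Y$ to conclude that this image is closed. The only cosmetic difference is that the paper works with the open set $A=\{x: f(x)\neq 0\}$ and commutes closure with $\pi$ via $\overline{\pi(A)}=\pi(\overline{A})$, whereas you apply the fiber-vanishing argument directly to $supp(f)$ itself, which avoids that extra step.
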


\begin{proof}
Define $A=\{x\in X:f(x)\neq 0\}$ and $B=\{y\in Y:F(y)\neq 0\}$. By definition, $\overline{A} = supp(f)$ and $\overline{B} = supp(F)$. Recall that $\lambda^y$ is concentrated on $\pi^{-1}(y)$, from which it follows that $$y \notin \pi(A) \ \Rightarrow \ \pi^{-1}(y) \cap A = \emptyset \ \Rightarrow \ \forall x \in \pi^{-1}(y), f(x)=0 \ \Rightarrow \ \int_X f(x)d \lambda^y (x) = 0 \ \Rightarrow \ y\notin B.$$ Thus $B\subseteq \pi(A)$. Since $\pi$ is continuous, $\overline{A}$ is compact, and $Y$ is $T_2$, we obtain $supp(F) = \overline{B} \subseteq \overline{\pi(A)} = \pi(\overline{A}) = \pi(supp(f))$.
\end{proof}

\begin{cor}\label{cor:CSM has compact support}
A CSM $\lambda^{\bullet}$ on $\pi:X \rightarrow Y$ satisfies that for every $f\in C_c(X)$, the map $\displaystyle y \mapsto \int_X f(x)d \lambda^y (x)$ is in $C_c(Y)$.
\end{cor}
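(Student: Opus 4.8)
The plan is to show that the function $F(y) = \int_X f(x)\,d\lambda^y(x)$, which we already know how to handle in two complementary ways, is both continuous and compactly supported, and hence lies in $C_c(Y)$. First I would invoke the definition of a CSM, together with the remark following it that extends continuity from nonnegative $f$ to arbitrary complex-valued $f \in C_c(X)$ (splitting into real and imaginary, positive and negative parts). This immediately gives that $F$ is a continuous function on $Y$, so the only remaining task is to control its support.

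Next I would apply Lemma \ref{lem:CSM_support} directly to the given $f \in C_c(X)$, which yields the containment $supp(F) \subseteq \pi(supp(f))$. The point is then a short topological argument: since $f \in C_c(X)$, the set $supp(f)$ is compact, and because $\pi$ is continuous, the image $\pi(supp(f))$ is compact as well. Here I would use the standing assumption that $Y$ is Hausdorff (indeed locally compact Hausdorff in the CSM setting), so that the compact set $\pi(supp(f))$ is closed in $Y$.

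Finally, I would observe that $supp(F)$ is by definition a closed subset of $Y$ (as established in Lemma \ref{lem:support}), and it is contained in the compact set $\pi(supp(f))$. A closed subset of a compact set is compact, so $supp(F)$ is compact. Combining this with the continuity of $F$ gives $F \in C_c(Y)$, as desired.

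I do not anticipate a genuine obstacle here: this statement is essentially a packaging of Lemma \ref{lem:CSM_support} with the elementary fact that continuous images of compacta are compact. The only point requiring a little care is making explicit that continuity of $F$ for complex-valued $f$ follows from the nonnegative case via the decomposition noted after the definition of a CSM, and that the Hausdorff hypothesis on $Y$ is what lets us pass from ``compact'' to ``closed and contained in a compact set,'' thereby concluding compactness of $supp(F)$.
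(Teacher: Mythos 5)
Your proposal is correct and follows exactly the route the paper intends: the corollary is stated without proof precisely because it is the combination of Lemma \ref{lem:CSM_support} (whose proof already contains the compactness and Hausdorff argument $supp(F) = \overline{B} \subseteq \overline{\pi(A)} = \pi(supp(f))$) with the continuity of $F$ from the definition of a CSM and the remark extending it to complex-valued $f$. Your only addition is spelling out the final step that a closed subset of a compact set is compact, which the paper leaves implicit.
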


In the literature, the compact support of the map $y \mapsto \int_X f(x)d \lambda^y (x)$ is often included in the definition of continuity for a system of measures.

\begin{lem}\label{lem:CSM always locally bounded}
A CSM is always locally bounded.
\end{lem}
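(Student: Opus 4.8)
The plan is to dominate the measure of a suitable neighborhood by the integral of a compactly supported continuous function, and then invoke the CSM hypothesis together with Corollary \ref{cor:CSM has compact support} to bound that integral uniformly in $y$. Fix a point $x \in X$. Since $X$ is locally compact Hausdorff, I would first choose an open neighborhood $U_x$ of $x$ whose closure $\overline{U_x}$ is compact. By Urysohn's lemma for locally compact Hausdorff spaces, there exists $f \in C_c(X)$ with $0 \leq f \leq 1$ and $f \equiv 1$ on the compact set $\overline{U_x}$. In particular $\chi_{U_x} \leq f$ pointwise, so that for every $y \in Y$,
$$\lambda^y(U_x) = \int_X \chi_{U_x} \, d\lambda^y \leq \int_X f \, d\lambda^y =: F(y).$$

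Next, the CSM hypothesis guarantees that $F$ is continuous, and Corollary \ref{cor:CSM has compact support} upgrades this to $F \in C_c(Y)$. A continuous, compactly supported function on $Y$ attains a finite maximum $M = \sup_{y \in Y} F(y) < \infty$, since the supremum is attained on the compact set $supp(F)$ and $F$ vanishes off it. Setting $C = M + 1$, we obtain $\lambda^y(U_x) \leq F(y) \leq M < C$ for every $y \in Y$, which is exactly the local boundedness condition at $x$. As $x \in X$ was arbitrary, this establishes that $\lambda^{\bullet}$ is locally bounded.

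The step doing the real work is the reduction to a compactly supported dominating function: it is essential that $F$ be compactly supported and not merely continuous, because $Y$ is in general non-compact and a continuous function on $Y$ need not be bounded. This is precisely what Corollary \ref{cor:CSM has compact support} provides, and I expect it to be the crux of the argument. The only other mild subtlety is the appeal to local compactness, which ensures that $U_x$ can be chosen relatively compact so that the Urysohn function lies in $C_c(X)$ in the first place; both ingredients are available under the standing assumptions in force for continuous systems of measures.
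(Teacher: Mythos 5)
Your proof is correct and follows essentially the same route as the paper: both dominate the measure of a compact (or relatively compact) set by $\int_X f\,d\lambda^y$ for a Urysohn function $f \in C_c(X)$, and both conclude via the compact support of $F(y)=\int_X f\,d\lambda^y$ (the paper cites Lemma \ref{lem:CSM_support}, you cite its immediate consequence, Corollary \ref{cor:CSM has compact support}). The only cosmetic difference is that the paper first establishes boundedness on compact sets and then invokes the equivalence with local boundedness under local compactness, whereas you build the relatively compact neighborhood $U_x$ directly.
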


\begin{proof}
Let $\lambda^{\bullet}$ be a continuous system of measures on the continuous map $\pi:X \rightarrow Y$ and let $K\subseteq X$ be compact. There exists a function $f \in C_c(X)$ such that $f:X\rightarrow [0, 1]$ and $f \equiv 1$ on $K$. Therefore, $\lambda^y(K) = \int_X \chi_{_K}(x) d\lambda^y(x) \leq \int_X f(x) d\lambda^y(x)$.
By Lemma \ref{lem:CSM_support}, the support of the continuous function $F(y) = \int_X f(x) d\lambda^y(x)$ is contained in $\pi(supp(f))$, which is compact. Therefore, $F$ is a bounded function on $Y$, and so is $\lambda^{\bullet}(K)$. Hence $\lambda^{\bullet}$ is bounded on compact sets and therefore locally bounded.
\end{proof}

\begin{mydef}
A system of measures $\lambda^{\bullet}$ on $\pi:X \rightarrow Y$ satisfying that $\lambda^{y}(X)<\infty$ for every $y\in Y$ will be called a \textbf{system of finite measures}. If $\lambda^{\bullet}$ is also a BSM, it will be called a \textbf{finite BSM}, and if $\lambda^{\bullet}$ is also a CSM, it will be called a \textbf{finite CSM}.
\end{mydef}

\begin{mydef}
A system of measures $\lambda^{\bullet}$ on $\pi:X \rightarrow Y$ satisfying that $\lambda^{y}(X)=1$ for every $y\in Y$ will be called a \textbf{system of probability measures}. If $\lambda^{\bullet}$ is also a BSM, it will be called a \textbf{probability BSM}, and if $\lambda^{\bullet}$ is also a CSM, it will be called a \textbf{probability CSM}.
\end{mydef}

\begin{mydef}
A system of measures $\lambda^{\bullet}$ on $\pi:X \rightarrow Y$ satisfying that every $x \in X$ has a neighborhood $U_x$ such that $\lambda^{y}(U_x)<\infty$ for every $y \in Y$, will be called a \textbf{locally finite system of measures}. If $\lambda^{\bullet}$ is also a BSM, it will be called a \textbf{locally finite BSM}.
\end{mydef}

A locally finite system of measures is, in particular, a system of locally finite measures. We deliberately chose the stronger notion, as it is needed for our purposes (in particular for Lemma \ref{lem:criterion for locally finite BSM}).

Observe that a system of measures which is locally bounded, is of course locally finite. In light of Lemma \ref{lem:CSM always locally bounded} we have the following immediate corollary.

\begin{cor}\label{cor:CSM is locally finite}
A CSM is always locally finite.
\end{cor}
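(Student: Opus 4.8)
The plan is to deduce this directly from Lemma \ref{lem:CSM always locally bounded} together with the observation immediately preceding the statement, namely that local boundedness trivially implies local finiteness. Concretely, let $\lambda^{\bullet}$ be a CSM on a continuous map $\pi:X \rightarrow Y$.

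First I would invoke Lemma \ref{lem:CSM always locally bounded} to conclude that $\lambda^{\bullet}$ is locally bounded. Unwinding the definition of local boundedness, this means that for every point $x \in X$ there exist an open neighborhood $U_x$ of $x$ and a constant $C > 0$ such that $\lambda^y(U_x) < C$ for all $y \in Y$. The key point is that this single constant $C$ works uniformly across all $y$.

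Then I would simply observe that the uniform bound $C$ is in particular finite, so that $\lambda^y(U_x) < C < \infty$ for every $y \in Y$. This is exactly the condition in the definition of a locally finite system of measures: every $x \in X$ admits a neighborhood $U_x$ on which all of the measures $\lambda^y$ are finite. Hence $\lambda^{\bullet}$ is locally finite, which is the claim.

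I do not anticipate any real obstacle here, since the statement is an immediate logical consequence of the (genuinely nontrivial) Lemma \ref{lem:CSM always locally bounded} and the elementary implication that locally bounded systems are locally finite. All of the substantive content was already packaged into Lemma \ref{lem:CSM always locally bounded}, whose proof used a Urysohn-type function dominating $\chi_K$ together with the support estimate of Lemma \ref{lem:CSM_support}; once local boundedness is in hand, passing to local finiteness requires nothing more than discarding the uniformity of the bound and retaining only its finiteness for each fixed $y$. For this reason I would present the argument as a short corollary rather than as a separate proof.
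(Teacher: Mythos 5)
Your proof is correct and matches the paper's own derivation exactly: the corollary is stated right after the observation that locally bounded implies locally finite, and is obtained by combining that observation with Lemma \ref{lem:CSM always locally bounded}, precisely as you do. Nothing further is needed.
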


Before we proceed, we briefly recall the following well known facts from basic measure theory. A \textbf{Dynkin system} $\mathcal D$ is a non-empty collection of subsets of a space $X$ which is
\begin{enumerate}[(i)]
\item closed under relative complements, i.e. if $A,B \in \mathcal D$ and $A \subseteq B$ then $B \setminus A \in \mathcal D$;
\item closed under countable unions of increasing sequences, i.e. if $A_i \in \mathcal D$ and $A_i \subseteq A_{i+1}$ then $\bigcup_{i=1}^{\infty} A_i \in \mathcal D$;
\item contains $X$ itself.
\end{enumerate}
An equivalent notion is that of a \textbf{$\mathbf{\lambda}$-system} $\mathcal D$, which is a non-empty collection of subsets of a space $X$ which is
\begin{enumerate}[(a)]
\item closed under complements, i.e. if $A \in \mathcal D$ then $A^c \in \mathcal D$;
\item closed under disjoint countable unions, i.e. if $A_i \in \mathcal D$ and $A_i \cap A_j = \emptyset \ \forall i \neq j$ then $\bigcup_{i=1}^{\infty} A_i \in \mathcal D$;
\item contains $X$ itself.
\end{enumerate}
A \textbf{$\mathbf{\pi}$-system} $\mathcal P$ is a non-empty collection of subsets that is closed under finite intersections. \textbf{Dynkin's $\mathbf{\pi}$-$\mathbf{\lambda}$ Theorem} says that if a $\pi$-system $P$ is contained in a Dynkin system $D$, then the entire $\sigma$-algebra generated by $\mathcal P$ is contained in $\mathcal D$.

For our purposes, the following definition will be useful.

\begin{mydef}\label{def:pre-dynkin}
We will say that a collection $\mathcal{D}$ of subsets of $X$ ia a \textbf{pre-Dynkin system} if it satisfies the following two properties:
\begin{enumerate}
\item if $E,F\mbox{ and }E\cap F\in{\mathcal D}$, then  $E\cup F$ and $E\setminus F \in {\mathcal D}$;
\item if ${\mathcal C} \subseteq {\mathcal D}$ is at most countable, and any finite intersection of elements in
${\mathcal C}$ belongs to $\mathcal D$, then the union of all elements of ${\mathcal C}$ belongs to $\mathcal D$.
\end{enumerate}
\end{mydef}

\begin{lem}\label{lem:equivalent Dynkin}
Let $\mathcal{D}$ be a collection of subsets of a space $X$. $\mathcal{D}$ is a Dynkin system if and only if $\mathcal{D}$ is a pre-Dynkin system and $X$ belongs to $\mathcal{D}$.
\end{lem}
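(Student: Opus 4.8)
The plan is to prove the two implications separately, the key bridging observation being that the hypothesis $X \in \mathcal{D}$ upgrades the relative-complement condition (i) into genuine closure under complements: for any $A \in \mathcal{D}$ we have $A \subseteq X$, so $X \setminus A \in \mathcal{D}$. This is what allows the Dynkin axioms and the pre-Dynkin properties of Definition \ref{def:pre-dynkin} to communicate. As a warm-up I would also record that a Dynkin system containing $X$ is closed under disjoint unions of two sets: if $A \cap B = \emptyset$, then $B \subseteq A^c$, so $A^c \setminus B \in \mathcal{D}$ by axiom (i), and since $A^c \setminus B = (A \cup B)^c$, complementing gives $A \cup B \in \mathcal{D}$.

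For the forward direction, assume $\mathcal{D}$ is a Dynkin system; then $X \in \mathcal{D}$ is axiom (iii), and it remains to verify the two pre-Dynkin properties. For property (1), given $E, F, E\cap F \in \mathcal{D}$, the set $E \setminus F = E \setminus (E \cap F)$ is a relative complement of two members of $\mathcal{D}$ with $E \cap F \subseteq E$, hence lies in $\mathcal{D}$ by axiom (i); writing $E \cup F = (E \setminus F) \sqcup F$ as a disjoint union of members of $\mathcal{D}$ then gives $E \cup F \in \mathcal{D}$ by the warm-up. For property (2), I would first prove the \emph{finite} analogue by induction on $|\mathcal{C}|$: if $C_1, \dots, C_{n+1} \in \mathcal{D}$ and every finite intersection among them lies in $\mathcal{D}$, set $D_n = \bigcup_{i=1}^n C_i$; the inductive hypothesis gives $D_n \in \mathcal{D}$, and applied to the family $\{C_i \cap C_{n+1}\}_{i=1}^n$ it gives $D_n \cap C_{n+1} = \bigcup_{i=1}^n (C_i \cap C_{n+1}) \in \mathcal{D}$. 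Property (1) then yields $D_{n+1} = D_n \cup C_{n+1} \in \mathcal{D}$. For countable $\mathcal{C}$ the partial unions $D_n$ form an increasing sequence in $\mathcal{D}$, so axiom (ii) delivers $\bigcup_n C_n = \bigcup_n D_n \in \mathcal{D}$.

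For the reverse direction, assume $\mathcal{D}$ is pre-Dynkin with $X \in \mathcal{D}$, which is axiom (iii) for free. Axiom (i) follows from property (1): if $A \subseteq B$ are in $\mathcal{D}$, then $E \cap F = B \cap A = A \in \mathcal{D}$, so $B \setminus A \in \mathcal{D}$. Axiom (ii) follows from property (2) applied to an increasing sequence $\mathcal{C} = \{A_i\}$: since the $A_i$ are nested, every finite intersection of them equals the member of smallest index and hence lies in $\mathcal{D}$, so property (2) gives $\bigcup_i A_i \in \mathcal{D}$.

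The only genuinely delicate point is the nested induction establishing property (2) in the forward direction: the crux is to recognize that the family of pairwise traces $\{C_i \cap C_{n+1}\}$ again satisfies the hypothesis that all its finite intersections lie in $\mathcal{D}$ (since such an intersection is itself a finite intersection of the original $C_j$'s), which is exactly what makes the induction close. Everything else is bookkeeping with complements and disjoint decompositions, modulo routine handling of the edge cases where $\mathcal{C}$ is finite or empty, which I would dispatch quickly.
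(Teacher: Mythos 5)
Your proof is correct, and it takes a somewhat different route from the paper's. The paper leans on the equivalence between Dynkin systems and $\lambda$-systems (recorded as a known fact in its preliminaries): in the direction pre-Dynkin $+\ X \in \mathcal{D} \Rightarrow$ Dynkin it verifies the $\lambda$-system axioms --- complements via $X \cap A = A$, and disjoint countable unions via the trick that a finite intersection of two or more distinct disjoint sets is $\emptyset = X^c \in \mathcal{D}$ --- while in the opposite direction it quotes the $\lambda$-system's closure under finite disjoint unions to conclude $E \cup F = (E \setminus F) \cup (F \setminus E) \cup (E \cap F) \in \mathcal{D}$. You instead work entirely inside the Dynkin axioms (i)--(iii): you derive complement-closure and two-set disjoint-union closure directly from (i) and (iii) (your warm-up), use the leaner decomposition $E \cup F = (E \setminus F) \sqcup F$, and in the converse direction you verify axiom (ii) directly via the observation that a finite intersection of a nested family is its member of smallest index. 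You also carry out the induction for the finite case of pre-Dynkin property (2), with the key point that the traces $\{C_i \cap C_{n+1}\}_{i=1}^{n}$ inherit the intersection hypothesis; the paper merely asserts this finite case as an observation following from property (1). What each approach buys: the paper's argument is shorter on the page because it delegates to the standard Dynkin/$\lambda$ equivalence, whereas yours is self-contained and never needs that equivalence (which the paper states but does not prove), at the modest cost of the warm-up and the explicit induction.
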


\begin{proof}
Let $\mathcal{D}$ be a pre-Dynkin system on $X$ such that $X\in \mathcal{D}$. In order to prove that $\mathcal{D}$ is a Dynkin system, we verify properties (a), (b) and (c) above. Property (c) holds by assumption. For property (a), let $A \in \mathcal{D}$. Since $X\in \mathcal{D}$ and $X\cap A=A$, property (1) of a pre-Dynkin systems implies that $A^c=X\setminus A\in \mathcal{D}$, hence $\mathcal{D}$ is closed under complements. Finally, for property (b), let $\mathcal{C} = \{A_i\}_{i=1}^{\infty}\subseteq \mathcal{D}$ be a countable collection of pairwise disjoint subsets of $X$. For any finite intersection of distinct elements of $\mathcal{C}$ we have $$A_{i_1}\cap A_{i_2}\cap \dots\cap A_{i_k}=\left\{\begin{array}{lcl}A_{i_1}\in \mathcal{D}&\mbox{ if }&k=1,\\ \emptyset=X^c\in \mathcal{D}&\mbox{ if }&k>1.\end{array}\right.$$ Therefore, property (2) of a pre-Dynkin system guarantees that $\bigcup_{i=1}^{\infty} A_i \in \mathcal D$. We conclude that  $\mathcal{D}$ is a $\lambda$-system and thus a Dynkin system.

We now turn to the converse. Let $\mathcal{D}$ be a Dynkin system. Clearly, $X\in \mathcal{D}$. For property (1) of a pre-Dynkin system, let $E,\ F$ and $E\cap F\in \mathcal{D}$. Since by property (i) $\mathcal{D}$ is closed under relative complements, we have that $E\setminus F=E\setminus (E\cap F)\in \mathcal{D}$. Likewise, $F\setminus E=F\setminus (E\cap F)\in \mathcal{D}.$ From property (b) it follows that $\mathcal{D}$ is closed under disjoint finite unions, and thus we have that $E\cup F=(E\setminus F)\cup (F\setminus E)\cup (E\cap F)\in \mathcal{D}.$ For property (2) of a pre-Dynkin system, observe first that property (1) implies that if we have a \textit{finite} collection of sets in $\mathcal{D}$, satisfying that all their intersections are also in $\mathcal{D}$, then their union is in $\mathcal{D}$ as well. Now let $\mathcal{C}=\{C_i\}_{i=1}^{\infty}\subseteq \mathcal{D}$ be a \textit{countable} collection such that any finite intersection of its elements is in $\mathcal{D}$. Denote $V_k=\bigcup_{i=1}^k C_i$, for any $i\geq 1$. Applying the observation we just made to the finite collections $\mathcal{C}_k:=\{C_1,C_2,\dots, C_k\}$, we deduce that $V_k\in \mathcal{D}$, for all $k$. Since by property (ii) $\mathcal{D}$ is closed under countable unions of increasing sequences, we conclude that $\bigcup_{i=1}^{\infty}C_i=\bigcup_{k=1}^{\infty}V_k\in\mathcal{D}$. This completes the proof.
\end{proof}

\begin{prop}\label{prop:D contains Borel}
Let ${\mathcal D}$ be a pre-Dynkin system in $X$. If there is a countable basis ${\mathcal B}$ for the topology of $X$ such that $U_{1}\cap U_{2}\cap\dots\cap U_{n}\in {\mathcal D}$ for any $\{U_{1},U_{2},\dots, U_{n}\}\subset {\mathcal B}$, then ${\mathcal D}$ consists of all Borel subsets of $X$.
\end{prop}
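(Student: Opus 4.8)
The plan is to reduce the statement to a single application of Dynkin's $\pi$-$\lambda$ Theorem, the only genuine work being to promote the pre-Dynkin system $\mathcal{D}$ to an honest Dynkin system. First I would introduce the collection $\mathcal{P}$ of all finite intersections $U_1 \cap U_2 \cap \dots \cap U_n$ of elements of the basis $\mathcal{B}$. By construction $\mathcal{P}$ is closed under finite intersections, so it is a $\pi$-system, and the hypothesis says precisely that $\mathcal{P} \subseteq \mathcal{D}$; taking $n=1$ also records that $\mathcal{B} \subseteq \mathcal{D}$.

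The step I expect to be the main obstacle is showing that $X \in \mathcal{D}$: this is exactly the feature a pre-Dynkin system lacks compared with a Dynkin system, and it is where second countability must be used. Since $\mathcal{B}$ is a basis we have $\bigcup_{U \in \mathcal{B}} U = X$, and this is an at most countable union because $\mathcal{B}$ is countable. I would then invoke property (2) of Definition \ref{def:pre-dynkin} with the countable collection $\mathcal{C} = \mathcal{B} \subseteq \mathcal{D}$: every finite intersection of elements of $\mathcal{B}$ belongs to $\mathcal{D}$ by hypothesis, so property (2) delivers $X = \bigcup \mathcal{C} \in \mathcal{D}$.

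With $X \in \mathcal{D}$ established, Lemma \ref{lem:equivalent Dynkin} immediately upgrades $\mathcal{D}$ to a Dynkin system. Dynkin's $\pi$-$\lambda$ Theorem then applies to the inclusion $\mathcal{P} \subseteq \mathcal{D}$ of the $\pi$-system $\mathcal{P}$ into the Dynkin system $\mathcal{D}$, yielding $\sigma(\mathcal{P}) \subseteq \mathcal{D}$, where $\sigma(\mathcal{P})$ denotes the $\sigma$-algebra generated by $\mathcal{P}$.

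It remains to identify $\sigma(\mathcal{P})$ with the Borel $\sigma$-algebra. Each element of $\mathcal{P}$ is open, hence Borel, so $\sigma(\mathcal{P})$ is contained in the Borel $\sigma$-algebra. Conversely, using second countability once more, every open set $O \subseteq X$ can be written as $O = \bigcup \{ U \in \mathcal{B} : U \subseteq O \}$, an at most countable union of elements of $\mathcal{B} \subseteq \mathcal{P}$, so every open set lies in $\sigma(\mathcal{P})$; hence the Borel $\sigma$-algebra is contained in $\sigma(\mathcal{P})$. Combining the two inclusions gives that $\sigma(\mathcal{P})$ equals the Borel $\sigma$-algebra, and the chain (Borel $\sigma$-algebra) $= \sigma(\mathcal{P}) \subseteq \mathcal{D}$ shows that every Borel subset of $X$ belongs to $\mathcal{D}$, which is the assertion.
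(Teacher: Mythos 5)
Your proof is correct, and it reaches the conclusion by a slightly different decomposition than the paper's. The paper feeds the \emph{topology itself} into Dynkin's $\pi$-$\lambda$ Theorem: it first shows that \emph{every} open set $A$ lies in $\mathcal{D}$, by writing $A$ as a countable union of basis elements and applying property (2) of Definition \ref{def:pre-dynkin} (the hypothesis guarantees the finite intersections are in $\mathcal{D}$); in particular $X\in\mathcal{D}$, so $\mathcal{D}$ is a Dynkin system containing the $\pi$-system of open sets, and $\pi$-$\lambda$ finishes. You instead take the smaller $\pi$-system $\mathcal{P}$ of finite intersections of basis elements, use property (2) only once (with $\mathcal{C}=\mathcal{B}$, to get $X\in\mathcal{D}$ and hence, via Lemma \ref{lem:equivalent Dynkin}, that $\mathcal{D}$ is Dynkin), and then shift the burden onto the identification $\sigma(\mathcal{P})=$ Borel $\sigma$-algebra, which is where second countability enters for you; in the paper, second countability is instead spent before the $\pi$-$\lambda$ step, putting each open set into $\mathcal{D}$ directly. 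The trade-off: your version isolates the pre-Dynkin combinatorics to a single application and delegates the rest to standard facts about generated $\sigma$-algebras, while the paper's version is more self-contained at the $\pi$-$\lambda$ step (the familiar $\pi$-system of open sets, no discussion of $\sigma(\mathcal{P})$ needed) and yields the intermediate statement that $\mathcal{D}$ contains all open sets. Both arguments use the full strength of the hypothesis on arbitrary finite intersections, and both are complete.
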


\begin{proof}
Let $A$  be an open subset of $X$. Since ${\mathcal B}$ is a countable basis, there is a sequence $\{U_{1},U_{2},\dots, U_{n},\dots\}\subset {\mathcal B}$ such that $A=\bigcup_{i=1}^{\infty} U_i.$ Since, by assumption, all finite intersections of elements of the sequence belong to ${\mathcal D}$, property \textit{(2)} of Definition  \ref{def:pre-dynkin} implies that $A\in {\mathcal D}$. It follows that ${\mathcal D}$ contains all open subsets of $X$ and in particular  $X\in{\mathcal D}$. Therefore, ${\mathcal D}$ is a Dynkin system, containing all open subsets. Since open subsets form a $\pi$-system, we can invoke Dynkin's $\pi$-$\lambda$ Theorem to conclude that all Borel subsets of $X$ are in ${\mathcal D}$.
\end{proof}

\begin{lem}\label{lem:collection D}
Let $\lambda^{\bullet}$ be a system of finite measures. The collection of subsets
$${\mathcal D}=\{E\subseteq X\text{ Borel }:\ \lambda^{\bullet}(E)\text{ is a Borel function on } Y\}$$
is a pre-Dynkin system.
\end{lem}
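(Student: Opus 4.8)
The plan is to verify the two defining properties of a pre-Dynkin system directly from Definition \ref{def:pre-dynkin}, the crucial ingredient being that each $\lambda^y$ is a \emph{finite} measure: finiteness is exactly what makes subtraction of measure values legitimate, and it is used throughout. Note that each $E \in \mathcal{D}$ is by assumption Borel, so all sets manufactured below (finite unions, differences, countable unions) are again Borel and the only thing to check is the Borel measurability of the corresponding function on $Y$.

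First I would establish property \textit{(1)}. Suppose $E$, $F$ and $E\cap F$ all lie in $\mathcal{D}$, so that the three maps $y\mapsto\lambda^y(E)$, $y\mapsto\lambda^y(F)$ and $y\mapsto\lambda^y(E\cap F)$ are Borel functions on $Y$. Since $E\cap F\subseteq E$ and $\lambda^y(X)<\infty$, I can write $\lambda^y(E\setminus F)=\lambda^y(E)-\lambda^y(E\cap F)$, exhibiting $y\mapsto\lambda^y(E\setminus F)$ as a difference of Borel functions, hence Borel; thus $E\setminus F\in\mathcal{D}$. Likewise the inclusion-exclusion identity $\lambda^y(E\cup F)=\lambda^y(E)+\lambda^y(F)-\lambda^y(E\cap F)$ (again valid because the measures are finite) expresses $y\mapsto\lambda^y(E\cup F)$ as a linear combination of Borel functions, so $E\cup F\in\mathcal{D}$.

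Next, for property \textit{(2)}, let $\mathcal{C}=\{C_i\}$ be an at most countable subcollection of $\mathcal{D}$ all of whose finite intersections belong to $\mathcal{D}$, and set $V_k=\bigcup_{i=1}^k C_i$. The key step is to show each $V_k\in\mathcal{D}$ by means of the full inclusion-exclusion formula
$$\lambda^y(V_k)=\sum_{\emptyset\neq S\subseteq\{1,\dots,k\}}(-1)^{|S|+1}\,\lambda^y\Big(\bigcap_{i\in S}C_i\Big),$$
which holds precisely because all the measure values are finite. Each set $\bigcap_{i\in S}C_i$ is a finite intersection of elements of $\mathcal{C}$ and therefore lies in $\mathcal{D}$ by hypothesis (for $|S|=1$ this is just $C_i\in\mathcal{C}\subseteq\mathcal{D}$), so each summand $y\mapsto\lambda^y(\bigcap_{i\in S}C_i)$ is Borel; being a finite linear combination of Borel functions, $y\mapsto\lambda^y(V_k)$ is Borel, i.e. $V_k\in\mathcal{D}$. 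Finally, since $V_k$ increases to $\bigcup_i C_i$, continuity from below of each measure $\lambda^y$ gives $\lambda^y(\bigcup_i C_i)=\lim_{k\to\infty}\lambda^y(V_k)$, so $y\mapsto\lambda^y(\bigcup_i C_i)$ is a pointwise limit of Borel functions and hence Borel, whence $\bigcup_i C_i\in\mathcal{D}$.

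The main obstacle is the verification that $V_k\in\mathcal{D}$: this is where finiteness is essential, since the inclusion-exclusion formula would otherwise risk an undefined $\infty-\infty$, and where one must check carefully that every set appearing in the sum is a genuine finite intersection of members of $\mathcal{C}$, so that the hypothesis of property \textit{(2)} can be invoked. Once the finite unions $V_k$ are known to lie in $\mathcal{D}$, passing to the countable union is routine via monotone continuity of measures, which holds for any measure irrespective of finiteness.
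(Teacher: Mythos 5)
Your proof is correct and follows essentially the same route as the paper's: property \textit{(1)} via the finite-measure identities $\lambda^y(E\cup F)=\lambda^y(E)+\lambda^y(F)-\lambda^y(E\cap F)$ and $\lambda^y(E\setminus F)=\lambda^y(E)-\lambda^y(E\cap F)$, the finite case of property \textit{(2)} via inclusion-exclusion, and the countable case by writing the union as an increasing limit of finite unions and using continuity from below together with the fact that a pointwise limit of Borel functions is Borel. The only difference is cosmetic: you write out the full inclusion-exclusion formula explicitly, whereas the paper merely cites it.
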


\begin{proof}
We will prove that ${\mathcal D}$ satisfies properties \textit{(1)} and \textit{(2)} of Definition \ref{def:pre-dynkin}.
For any $y\in Y$ we have:
$${\lambda}^y(E\cup F)={\lambda}^y(E)+{\lambda}^y(F)-{\lambda}^y(E\cap F),\quad {\lambda}^y(E\setminus F)={\lambda}^y(E)-{\lambda}^y(E\cap F).$$ Therefore $\lambda^{\bullet}(E\cup F)$ and $\lambda^{\bullet}(E\setminus F)$ are Borel functions, and \textit{(1)} follows.

If ${\mathcal C}$ is finite, then \textit{(2)} is a consequence of an inclusion-exclusion formula as in \textit{(1)}. Suppose now that ${\mathcal C}$ is infinite, write ${\mathcal C}=\{E_n\}_{n=1}^{\infty}$ and let $E=\bigcup_{n=1}^{\infty} E_n$. Consider the sets
$$F_1:=E_1,\quad F_2:=E_1\cup E_2, \quad F_3:=E_1\cup E_2\cup E_3, \quad \dots$$
From the finite case we have that $F_n\in{\mathcal D}$ for all $n\geq 1$. Moreover, $E=\bigcup_{n=1}^{\infty} F_n$ and
$\lambda^{y}(E)=\lim_{n\rightarrow\infty} \lambda^{y}(F_n)$, for every $y\in Y$. Thus $\lambda^{\bullet}(E)$ is a Borel function, being a limit of the sequence of Borel functions $\{\lambda^{\bullet}(F_n)\}$. Therefore $E\in{\mathcal D}$, proving the infinite case of \textit{(2)}.
\end{proof}

\begin{lem}[Criterion for a system of finite measures to be a finite BSM]\label{lem:criterion for finite BSM}
Let $\pi:X\rightarrow Y$ be a Borel map endowed with a system of finite measures $\lambda^{\bullet}$. Assume that there is a countable basis ${\mathcal B}$ for the topology of $X$ such that $\lambda^{\bullet}(U_{1}\cap U_{2}\cap\dots\cap U_{n})$ is a Borel function for any $\{U_{1},U_{2},\dots, U_{n}\}\subset {\mathcal B}$, $n\geq 1$. Then $\lambda^{\bullet}$ is a finite BSM.
\end{lem}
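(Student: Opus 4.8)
The plan is to combine Lemma \ref{lem:collection D} with Proposition \ref{prop:D contains Borel}, which together do essentially all of the work. First I would introduce the collection
$${\mathcal D}=\{E\subseteq X\text{ Borel }:\ \lambda^{\bullet}(E)\text{ is a Borel function on } Y\}$$
associated to the given system $\lambda^{\bullet}$. Since $\lambda^{\bullet}$ is by hypothesis a system of finite measures, Lemma \ref{lem:collection D} applies verbatim and tells us that ${\mathcal D}$ is a pre-Dynkin system. This is the one step where finiteness is genuinely used: the inclusion-exclusion identities ${\lambda}^y(E\cup F)={\lambda}^y(E)+{\lambda}^y(F)-{\lambda}^y(E\cap F)$ and ${\lambda}^y(E\setminus F)={\lambda}^y(E)-{\lambda}^y(E\cap F)$ require the relevant quantities to be finite, so that the subtractions are well defined.

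Next I would observe that the hypothesis on the countable basis ${\mathcal B}$ is precisely the statement needed to invoke Proposition \ref{prop:D contains Borel}. Indeed, saying that $\lambda^{\bullet}(U_{1}\cap U_{2}\cap\dots\cap U_{n})$ is a Borel function for every finite subcollection $\{U_{1},\dots, U_{n}\}\subset {\mathcal B}$ is, by the very definition of ${\mathcal D}$, the assertion that each such finite intersection $U_{1}\cap\dots\cap U_{n}$ belongs to ${\mathcal D}$. Thus the pre-Dynkin system ${\mathcal D}$ satisfies the hypotheses of Proposition \ref{prop:D contains Borel} with respect to the basis ${\mathcal B}$.

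Applying Proposition \ref{prop:D contains Borel} then yields that ${\mathcal D}$ consists of all Borel subsets of $X$. Unwinding the definition of ${\mathcal D}$, this says exactly that $\lambda^{\bullet}(E)$ is a Borel function on $Y$ for every Borel set $E\subseteq X$, which is the defining property of a BSM. Since $\lambda^{\bullet}$ was assumed to be a system of finite measures, it is therefore a finite BSM, as required.

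I do not anticipate a genuine obstacle here, since the substantive content has already been isolated in the two preceding results; the only care required is to match the basis hypothesis precisely to the condition of Proposition \ref{prop:D contains Borel}, and to note that finiteness of the measures is exactly what licenses the pre-Dynkin structure in Lemma \ref{lem:collection D}.
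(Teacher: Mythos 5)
Your proposal is correct and follows exactly the paper's own proof: define ${\mathcal D}$, invoke Lemma \ref{lem:collection D} to see it is a pre-Dynkin system, and apply Proposition \ref{prop:D contains Borel} via the basis hypothesis to conclude ${\mathcal D}$ contains all Borel sets. Your added remark that finiteness of the measures is what licenses the subtractions in Lemma \ref{lem:collection D} is an accurate reading of where that hypothesis is used.
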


\begin{proof}
Consider the collection ${\mathcal D}=\{E\subseteq X\text{ Borel }:\ \lambda^{\bullet}(E)\text{ is a Borel function on } Y\}$. By Lemma \ref{lem:collection D} above, ${\mathcal D}$ is a pre-Dynkin system. With respect to ${\mathcal D}$, the basis ${\mathcal B}$ satisfies the condition of Proposition \ref{prop:D contains Borel}, which in turn implies that all Borel subsets of $X$ are in ${\mathcal D}$. Therefore, $\lambda^{\bullet}$ is a BSM.
\end{proof}

\begin{lem}[Criterion for a locally finite system of measures to be a locally finite BSM]\label{lem:criterion for locally finite BSM}
Let $\pi:X\rightarrow Y$ be a Borel map endowed with a locally finite system of measures $\lambda^{\bullet}$. Assume that there is a countable basis ${\mathcal B}$ for the topology of $X$ such that $\lambda^{\bullet}(U_{1}\cap U_{2}\cap\dots\cap U_{n})$ is a Borel function for any $\{U_{1},U_{2},\dots, U_{n}\}\subset {\mathcal B}$, $n\geq 1$. Then $\lambda^{\bullet}$ is a locally finite BSM.
\end{lem}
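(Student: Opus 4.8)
The plan is to reduce the general locally finite case to the already-established finite case of Lemma \ref{lem:criterion for finite BSM} by localizing on a countable cover of $X$ on whose pieces the measures are finite. The reason we cannot run the pre-Dynkin argument of Lemmas \ref{lem:collection D} and \ref{lem:criterion for finite BSM} directly is that when $\lambda^y(X)=\infty$ the inclusion-exclusion identities used there, such as $\lambda^y(E\setminus F)=\lambda^y(E)-\lambda^y(E\cap F)$, involve differences of possibly infinite quantities and break down. Restricting attention to sets of finite measure repairs this, and the whole proof is really a device for never evaluating $\lambda^\bullet$ except on sets of finite measure.

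First I would build the cover. Set $\mathcal{B}_{fin}=\{B\in\mathcal{B}:\ \lambda^y(B)<\infty\text{ for all }y\in Y\}$, a countable subcollection of $\mathcal{B}$. It covers $X$: given $x\in X$, local finiteness supplies a neighborhood $U_x$ with $\lambda^y(U_x)<\infty$ for every $y$, and choosing $B\in\mathcal{B}$ with $x\in B\subseteq U_x$ yields $\lambda^y(B)\leq\lambda^y(U_x)<\infty$, so $B\in\mathcal{B}_{fin}$. Enumerate $\mathcal{B}_{fin}=\{W_k\}_k$; note we only need a cover here, not a basis.

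Next, for each $k$ I would pass to the restricted system $\lambda^{\bullet}_k$ given by $\lambda^y_k(E)=\lambda^y(E\cap W_k)$. Each $\lambda^y_k$ is a Borel measure concentrated on $\pi^{-1}(y)$ with total mass $\lambda^y(W_k)<\infty$, so $\lambda^{\bullet}_k$ is a system of finite measures. The key computation is that for $U_1,\dots,U_n\in\mathcal{B}$ one has $\lambda^y_k(U_1\cap\dots\cap U_n)=\lambda^y(U_1\cap\dots\cap U_n\cap W_k)$; since $W_k\in\mathcal{B}$, the right-hand side is the value of $\lambda^{\bullet}$ on a finite intersection of basis elements, hence Borel in $y$ by hypothesis. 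Thus $\lambda^{\bullet}_k$ satisfies the hypotheses of Lemma \ref{lem:criterion for finite BSM} with respect to the same basis $\mathcal{B}$, so $\lambda^{\bullet}_k$ is a finite BSM; in particular $y\mapsto\lambda^y(E\cap W_k)$ is Borel for every Borel set $E$.

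Finally I would reassemble the full measure by disjointification. Put $V_k=W_k\setminus(W_1\cup\dots\cup W_{k-1})$, so the $V_k$ are disjoint Borel sets with $\bigcup_k V_k=X$. Because $E\cap V_k\subseteq W_k$, we have $\lambda^y(E\cap V_k)=\lambda^y_k(E\cap V_k)$, which is Borel in $y$ since $\lambda^{\bullet}_k$ is a BSM; and countable additivity gives $\lambda^y(E)=\sum_k\lambda^y(E\cap V_k)$ for every $y$. A countable sum of nonnegative Borel functions is Borel, so $y\mapsto\lambda^y(E)$ is Borel and $\lambda^{\bullet}$ is a BSM. Together with the assumed local finiteness this shows $\lambda^{\bullet}$ is a locally finite BSM. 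I expect the only delicate point to be the bookkeeping that keeps every set on which $\lambda^{\bullet}$ is evaluated of finite measure so that no infinite cancellations occur; the disjointification is precisely what packages this cleanly.
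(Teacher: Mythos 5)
Your proof is correct, and it reaches the result by the same overall strategy as the paper's proof --- pass to the finite-measure basis elements, settle the finite case there, and reassemble --- but the execution differs in two genuine ways, both of which streamline the argument. First, where the paper restricts $\pi$ to each basis element $U_i$ viewed as a subspace (with basis $\{U_i\cap U_j\}_{j}$) and re-runs the pre-Dynkin machinery of Lemma \ref{lem:collection D} and Proposition \ref{prop:D contains Borel} on that subspace, you instead truncate the measures, $\lambda^y_k(E)=\lambda^y(E\cap W_k)$, keep them as a system of finite measures on $\pi:X\rightarrow Y$ itself, and invoke Lemma \ref{lem:criterion for finite BSM} as a black box with the original basis $\mathcal{B}$; the observation that makes this legitimate --- $W_k\in\mathcal{B}$, so $U_1\cap\dots\cap U_n\cap W_k$ is again a finite intersection of basis elements --- is exactly right, and it spares you the subspace-topology bookkeeping. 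Second, where the paper reassembles via the increasing open sets $V_n=\bigcup_{i=1}^n U_i$, which forces an inclusion-exclusion argument (valid only because $\lambda^y(V_n)<\infty$) followed by a monotone limit, you disjointify, setting $V_k=W_k\setminus(W_1\cup\dots\cup W_{k-1})$, and use plain countable additivity together with the fact that a countable sum of nonnegative Borel functions is Borel. Your reassembly is cleaner in that no cancellation ever occurs, so finiteness plays no role at that stage, whereas the paper's inclusion-exclusion step genuinely depends on it. Both routes prove the same statement; yours trades the paper's self-contained re-derivation for a tighter reduction to the previously proved finite criterion.
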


\begin{proof}
Let ${\mathcal B}=\{U_i\}_{i=1}^{\infty}$. Since $\lambda^{\bullet}$ is locally finite, it is straightforward to verify that the sub-collection $\{U \in \mathcal{B} ~|~ \lambda^{y}(U)<\infty \text{ for every } y \in Y \}$ is itself a basis for $X$. Therefore, we can assume that all $U_i \in \mathcal{B}$ satisfy $\lambda^{y}(U_i)<\infty$ for every $y \in Y$.

For any $i \geq 1$, consider the map $\pi_i:U_i \rightarrow Y$ given by composing the inclusion $U_i\hookrightarrow X$ with $\pi : X\rightarrow Y$. Let
$\lambda^{\bullet}_i$ denote the system of measures on $\pi_i$ obtained by restricting $\lambda^{\bullet}$. Note that $\lambda^{\bullet}_i$ is a system of \textit{finite} measures, since $\lambda_i^{y}(U_i) = \lambda^{y}(U_i)<\infty$ for every $y \in Y$. Now consider the collection
$${\mathcal D}_i=\{E\subseteq U_i\text{ Borel }:\ \lambda_i^{\bullet}(E)\text{ is a Borel function on } Y\}.$$ We can apply Lemma \ref{lem:collection D}, which guarantees that ${\mathcal D}_i$ is a pre-Dynkin system in $U_i$. Also, the collection ${\mathcal B}_i=\left\{U_i\cap U_j\right\}_{j=1}^{\infty}$ is a basis for the topology of $U_i$. Moreover, due to our assumption on $\mathcal B$, we see that ${\mathcal B}_i$ satisfies the hypotheses of Proposition \ref{prop:D contains Borel} with respect to the collection ${\mathcal D}_i$. Consequently, ${\mathcal D}_i$ consists of all Borel subsets of $U_i$.

Let $E\subseteq X$ be a Borel subset. We need to show that $\lambda^{\bullet}(E)$ is a Borel function. For any $i$, the function $\lambda_i^{\bullet}(E\cap U_i)$ is a Borel function on $Y$, since $E\cap U_i$ is a Borel subset of $U_i$ and therefore in ${\mathcal D}_i$. Therefore, for any $i$, $\lambda^{\bullet}(E\cap U_i)$ is a Borel function on $Y$. Likewise, for any $i_1,\dots,i_k$ the function $\lambda^{\bullet}(E\cap U_{i_1}\cap\dots\cap U_{i_k})$ is a Borel function on $Y$.

Next, we define $V_n= \bigcup_{i=1}^n U_i$. This is an increasing sequence of open sets $\{V_n\}_{n=1}^{\infty}$, and each $V_n$ satisfies $\lambda^y(V_n) \leq \sum_{i=1}^n \lambda^y(U_i)<\infty$ for every $y \in Y$. Since $E\cap V_n = E \cap \left(\bigcup_{i=1}^n U_i \right) = (E \cap U_1) \cup (E \cap U_2) \cup \dots \cup (E \cap U_n)$, a routine inclusion-exclusion type argument yields that for all $n$, $\lambda^{\bullet}(E\cap V_n)$ can be written as a linear combination of functions of the form $\lambda^{\bullet}(E\cap U_{i_1}\cap\dots\cap U_{i_k})$, and is therefore a Borel function on $Y$.

Finally, $\lambda^{\bullet}(E)$ is the limit of the increasing sequence of Borel functions $\lambda^{\bullet}(E\cap V_n)$, hence by the Monotone Convergence Theorem, $\lambda^{\bullet}(E)$ is a Borel function on $Y$, as required.
\end{proof}

An immediate consequence of Lemma \ref{lem:criterion for locally finite BSM}
is the following.
\begin{cor}\label{cor:criterion for BSM}
Let $\pi:X\rightarrow Y$ be a Borel map endowed with a locally finite system of measures $\lambda^{\bullet}$.
If $\lambda^{\bullet}(A)$ is a Borel function for any open set $A$, then $\lambda^{\bullet}$ is a locally finite BSM.
\end{cor}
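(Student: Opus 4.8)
The plan is to reduce the statement directly to Lemma \ref{lem:criterion for locally finite BSM}, which already does all the substantive work. That lemma shows that a locally finite system of measures $\lambda^{\bullet}$ is a locally finite BSM provided there exists a \emph{countable} basis $\mathcal{B}$ for the topology of $X$ along which every finite intersection $U_1 \cap U_2 \cap \dots \cap U_n$ of basis elements yields a Borel function $\lambda^{\bullet}(U_1 \cap \dots \cap U_n)$. Thus the only task is to verify that the hypothesis of the corollary---that $\lambda^{\bullet}(A)$ is Borel for \emph{every} open set $A$---supplies exactly this data for some choice of $\mathcal{B}$.

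First I would invoke the blanket assumption of Section \ref{sec:SOM} that all topological spaces are second countable, which guarantees the existence of a countable basis $\mathcal{B}$ for the topology of $X$. Next I would observe that any finite intersection $U_1 \cap U_2 \cap \dots \cap U_n$ of elements of $\mathcal{B}$ is itself an open subset of $X$, being a finite intersection of open sets. By the hypothesis of the corollary, $\lambda^{\bullet}(A)$ is a Borel function for every open $A$; applying this with $A = U_1 \cap \dots \cap U_n$ shows that $\lambda^{\bullet}(U_1 \cap \dots \cap U_n)$ is a Borel function for every finite subcollection of $\mathcal{B}$. This is precisely the condition demanded by Lemma \ref{lem:criterion for locally finite BSM}, so that lemma applies and immediately yields that $\lambda^{\bullet}$ is a locally finite BSM.

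There is essentially no obstacle to overcome here: the entire content of the corollary is contained in the preceding lemma, and the proof amounts to the trivial observation that being Borel on all open sets forces being Borel on the (open) finite intersections of basis elements. The only point worth flagging explicitly is that second countability is what makes a \emph{countable} basis available, which is exactly what is needed to match the formulation of Lemma \ref{lem:criterion for locally finite BSM}; without it the hypothesis ``$\lambda^{\bullet}(A)$ Borel for all open $A$'' would still hold but could not be funneled through that lemma.
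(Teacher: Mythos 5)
Your proof is correct and is exactly the paper's argument: the paper states the corollary as an immediate consequence of Lemma \ref{lem:criterion for locally finite BSM}, and your write-up simply spells out the (trivial) verification that finite intersections of basis elements are open, so the hypothesis of that lemma is met for any countable basis supplied by second countability.
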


\begin{prop}\label{prop:CSM is BSM}
A CSM is a locally finite BSM.
\end{prop}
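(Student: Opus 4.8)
The plan is to reduce everything to Corollary \ref{cor:criterion for BSM}. Since a CSM is automatically locally finite by Corollary \ref{cor:CSM is locally finite}, it suffices to prove that $\lambda^{\bullet}(A)$ is a Borel function on $Y$ for every open set $A \subseteq X$; the corollary then upgrades this to the assertion that $\lambda^{\bullet}$ is a locally finite BSM. Thus the entire content of the proposition lies in controlling $\lambda^{\bullet}$ on open sets, and the bridge from there to the continuity hypothesis will be monotone approximation of indicator functions of open sets by functions in $C_c(X)$.

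First I would fix an open set $A \subseteq X$ and build an increasing sequence $0 \le f_1 \le f_2 \le \cdots$ in $C_c(X)$ with $f_n \le 1$ and $f_n \uparrow \chi_{_A}$ pointwise. Because $X$ is second countable, locally compact and Hausdorff, the open subspace $A$ is itself locally compact Hausdorff and second countable, hence $\sigma$-compact, so I can exhaust it by compact sets $K_n$ with $K_n \subseteq \mathrm{int}(K_{n+1})$ and $\bigcup_n K_n = A$. Urysohn's lemma for locally compact Hausdorff spaces then provides $g_n \in C_c(X)$ with $0 \le g_n \le 1$, $g_n \equiv 1$ on $K_n$ and $\mathrm{supp}(g_n) \subseteq A$. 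Setting $f_n = \max(g_1, \dots, g_n)$ keeps each $f_n$ continuous, bounded by $1$, and compactly supported inside $A$, while making the sequence increasing. For $x \in A$ one has $x \in K_n$ for all large $n$, so $f_n(x) \to 1$, whereas for $x \notin A$ every $f_n(x) = 0$; hence $f_n \uparrow \chi_{_A}$.

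With this sequence in hand, the Monotone Convergence Theorem gives, for each $y \in Y$,
\[
\lambda^y(A) = \int_X \chi_{_A}(x) \, d\lambda^y(x) = \lim_{n \to \infty} \int_X f_n(x) \, d\lambda^y(x).
\]
Each function $y \mapsto \int_X f_n \, d\lambda^y$ is continuous by the defining property of a CSM, and in particular Borel; therefore $\lambda^{\bullet}(A)$, being a pointwise limit of Borel functions on $Y$, is Borel (note that the limit may equal $+\infty$ where $A$ has infinite measure, which is harmless since Borel functions here are $[0,\infty]$-valued). As $A$ was an arbitrary open set, Corollary \ref{cor:criterion for BSM} applies and yields that $\lambda^{\bullet}$ is a locally finite BSM.

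I expect the only genuine obstacle to be the construction of the monotone approximating sequence $f_n \uparrow \chi_{_A}$: it is precisely here that the standing assumptions (second countability, local compactness, Hausdorffness) are used, via $\sigma$-compactness of $A$ and Urysohn's lemma, and the $\max$ trick is what converts a family of cut-off functions into an \emph{increasing} one, so that the Monotone Convergence Theorem can be invoked rather than a dominated-convergence argument requiring an integrable majorant we do not have at our disposal.
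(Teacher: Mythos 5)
Your proof is correct and takes essentially the same route as the paper's: both reduce the problem via Corollaries \ref{cor:CSM is locally finite} and \ref{cor:criterion for BSM} to showing that $\lambda^{\bullet}(A)$ is Borel for every open $A$, and then apply the Monotone Convergence Theorem to an increasing sequence in $C_c(X)$ converging pointwise to $\chi_{_A}$, so that $\lambda^{\bullet}(A)$ is a monotone limit of continuous functions. The only difference is cosmetic: the paper builds its non-decreasing sequence from an exhaustion of $A$ by relatively compact open sets, whereas you use a compact exhaustion together with the $\max$ trick.
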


\begin{proof}
Let $\lambda^{\bullet}$ be a CSM on $\pi:X \rightarrow Y$. By Corollary \ref{cor:CSM is locally finite} $\lambda^{\bullet}$ is locally finite. By Corollary \ref{cor:criterion for BSM} it is sufficient to show that $\lambda^{\bullet}(A)$ is a Borel function for any open subset $A$.

There exists an increasing sequence $\{A_n\}_{n=1}^{\infty}$ of open subsets of $A$ such that $\overline{A_n}$ is compact for every $n$, $\overline{A_n}\subset A_{n+1}$ and $\bigcup_{n=1}^{\infty}A_n=A$. Moreover, there exists a non-decreasing sequence of compactly supported continuous functions $\psi_n:X\rightarrow [0, 1]$ such that $\psi_n\equiv 1$ on $A_n$ and $supp(\psi_n)\subseteq \overline{A_{n+1}}$ for every $n$. Therefore, $\forall x\in X$, $\lim_{n\rightarrow \infty}\psi_n(x)=\chi_{_A}(x)$. It follows, by the Monotone Convergence Theorem, that
$$\lambda^{y}(A)=\int_X\chi_{_A}(x)d\lambda^y(x)=\int_X\lim_{n\rightarrow \infty}\psi_n(x)d\lambda^y(x)=\lim_{n\rightarrow\infty}\int_X\psi_n(x)d\lambda^y(x).$$
Since $\forall n$, $\psi_n \in C_c(X)$ and $\lambda^{\bullet}$ is continuous, the map $y \mapsto \int_X\psi_n(x)d\lambda^y(x)$ is continuous $\forall n$. Therefore the map $y \mapsto \lambda^y(A)$ is a (monotone) limit of continuous (hence Borel) functions, and is thus a Borel function.
\end{proof}

We omit the full proof of the following lemma, which is analogous to the proof of Lemma \ref{lem:criterion for locally finite BSM}, via a corresponding version of Lemma \ref{lem:collection D} with ${\mathcal D}=\{E\subseteq X\text{ Borel }:\ \mu(E) = \nu(E) \}$.

\begin{lem}\label{lem:mu=nu for every E Borel}
Let $\mu$ and $\nu$ be two locally finite measures on a space $X$. Assume that there is a countable basis ${\mathcal B}$ for the topology of $X$ such that $\mu(U_{1}\cap U_{2}\cap\dots\cap U_{n})=\nu(U_{1}\cap U_{2}\cap\dots\cap U_{n})$ for any $\{U_{1},U_{2},\dots, U_{n}\}\subset {\mathcal B}$, $n\geq 1$. Then $\mu(E) = \nu(E)$ for any Borel subset $E \subseteq X$.
\end{lem}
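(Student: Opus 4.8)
The plan is to mirror, essentially verbatim, the proof of Lemma \ref{lem:criterion for locally finite BSM}, with the statement ``$\lambda^{\bullet}(E)$ is a Borel function on $Y$'' replaced throughout by the numerical equality $\mu(E) = \nu(E)$. Accordingly, the first step is to establish the two-measure analog of Lemma \ref{lem:collection D}: for \emph{finite} measures $\mu$ and $\nu$ on a space $X$, the collection $\mathcal{D} = \{E \subseteq X \text{ Borel} : \mu(E) = \nu(E)\}$ is a pre-Dynkin system. Property \emph{(1)} follows from the identities $\mu(E \cup F) = \mu(E) + \mu(F) - \mu(E \cap F)$ and $\mu(E \setminus F) = \mu(E) - \mu(E \cap F)$, and the same for $\nu$: if $E$, $F$ and $E \cap F$ all lie in $\mathcal{D}$, then so do $E \cup F$ and $E \setminus F$. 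For property \emph{(2)}, the finite case is the same inclusion-exclusion as in \emph{(1)}, and for a countable $\mathcal{C} = \{E_n\}$ one sets $F_n = \bigcup_{i=1}^n E_i \in \mathcal{D}$ and invokes continuity from below: $\mu(\bigcup_n E_n) = \lim_n \mu(F_n) = \lim_n \nu(F_n) = \nu(\bigcup_n E_n)$.

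With this analog in hand, the argument proceeds exactly as before. Using local finiteness of both $\mu$ and $\nu$, I would first pass to the sub-collection $\{U \in \mathcal{B} : \mu(U) < \infty \text{ and } \nu(U) < \infty\}$, which is again a basis: every point has a neighborhood of finite $\mu$-measure and one of finite $\nu$-measure, and intersecting the two produces a neighborhood of finite measure for both, inside which basis elements with the desired property can be found. Thus I may assume every $U_i \in \mathcal{B}$ has both $\mu(U_i) < \infty$ and $\nu(U_i) < \infty$. For each $i$, restricting $\mu$ and $\nu$ to the subspace $U_i$ yields finite measures there, so the collection $\mathcal{D}_i = \{E \subseteq U_i \text{ Borel} : \mu(E) = \nu(E)\}$ is a pre-Dynkin system by the analog just proved.

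The basis $\mathcal{B}_i = \{U_i \cap U_j\}_{j=1}^{\infty}$ of $U_i$ then satisfies the hypothesis of Proposition \ref{prop:D contains Borel} relative to $\mathcal{D}_i$, since each finite intersection $U_i \cap U_{i_1} \cap \dots \cap U_{i_k}$ is itself a finite intersection of elements of $\mathcal{B}$ and hence carries equal $\mu$- and $\nu$-measure by hypothesis; therefore $\mathcal{D}_i$ consists of all Borel subsets of $U_i$, i.e. $\mu = \nu$ on every Borel subset of every $U_i$. Given an arbitrary Borel $E \subseteq X$, it follows that $\mu(E \cap U_i) = \nu(E \cap U_i)$ for all $i$, and more generally $\mu(E \cap U_{i_1} \cap \dots \cap U_{i_k}) = \nu(E \cap U_{i_1} \cap \dots \cap U_{i_k})$. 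Setting $V_n = \bigcup_{i=1}^n U_i$, an increasing sequence of open sets with $\bigcup_n V_n = X$ and $\mu(V_n), \nu(V_n) < \infty$, an inclusion-exclusion computation gives $\mu(E \cap V_n) = \nu(E \cap V_n)$ for all $n$, and continuity from below yields $\mu(E) = \lim_n \mu(E \cap V_n) = \lim_n \nu(E \cap V_n) = \nu(E)$, valid even when both sides are infinite.

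The main obstacle is exactly the one this whole scaffolding is designed to circumvent: the inclusion-exclusion identities require \emph{subtracting} measures, which is only legitimate for finite quantities. Cutting $\mathcal{B}$ down to basis elements of finite measure and working on the subspaces $U_i$ serves precisely to keep every measure appearing in these identities finite; once that is arranged, the pre-Dynkin/Dynkin machinery of Proposition \ref{prop:D contains Borel} together with a single monotone limit at the end completes the proof.
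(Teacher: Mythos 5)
Your proposal is correct and is precisely the argument the paper has in mind: the paper explicitly omits the proof, stating it is analogous to that of Lemma \ref{lem:criterion for locally finite BSM} via the two-measure version of Lemma \ref{lem:collection D} with ${\mathcal D}=\{E \text{ Borel}: \mu(E)=\nu(E)\}$, which is exactly the scaffolding you build (pre-Dynkin analog for finite measures, passing to a sub-basis of sets of finite $\mu$- and $\nu$-measure, Proposition \ref{prop:D contains Borel} on each $U_i$, then inclusion-exclusion on $V_n=\bigcup_{i=1}^n U_i$ and continuity from below). Your attention to keeping all subtracted quantities finite is the right and only delicate point, and it is handled correctly.
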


\begin{cor}\label{cor:mu=nu for every A open}
Let $\mu$ and $\nu$ be two locally finite measures on a space $X$. If $\mu(A) = \nu(A)$ for any open subset $A \subseteq X$, then $\mu(E) = \nu(E)$ for any Borel subset $E \subseteq X$.
\end{cor}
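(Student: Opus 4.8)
The plan is to deduce this immediately from Lemma \ref{lem:mu=nu for every E Borel} by verifying that its hypotheses are met. The only ingredient of that lemma not already present in the corollary's statement is the existence of a countable basis $\mathcal{B}$ on which $\mu$ and $\nu$ agree along all finite intersections. First I would invoke the standing assumption that all topological spaces in this paper are second countable, which furnishes a countable basis $\mathcal{B}$ for the topology of $X$.

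Next I would observe that the condition required by Lemma \ref{lem:mu=nu for every E Borel} transfers for free. For any finite collection $\{U_1, U_2, \dots, U_n\} \subset \mathcal{B}$, the intersection $U_1 \cap U_2 \cap \dots \cap U_n$ is again an open subset of $X$, since a finite intersection of open sets is open. The hypothesis of the corollary asserts that $\mu(A) = \nu(A)$ for \emph{every} open set $A$, so in particular $\mu(U_1 \cap \dots \cap U_n) = \nu(U_1 \cap \dots \cap U_n)$. Thus $\mathcal{B}$ satisfies exactly the basis condition demanded by the lemma.

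Having checked this, together with the local finiteness of $\mu$ and $\nu$ which is assumed in both statements, I would apply Lemma \ref{lem:mu=nu for every E Borel} directly to conclude that $\mu(E) = \nu(E)$ for every Borel subset $E \subseteq X$. I do not anticipate any genuine obstacle here: the corollary is a clean specialization in which the abstract basis hypothesis is replaced by the stronger and more transparent requirement of agreement on all open sets. The only points meriting explicit mention are that second countability supplies the basis and that openness is preserved under finite intersections, so that the hypothesis of the preceding lemma is automatically satisfied.
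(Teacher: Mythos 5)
Your proposal is correct and matches the paper's intent exactly: the corollary is stated there without proof precisely because it follows immediately from Lemma \ref{lem:mu=nu for every E Borel}, using the standing second countability assumption to produce a countable basis and the fact that finite intersections of open sets are open. Nothing further is needed.
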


We will make use of the above lemma and corollary in the sequel.

\section{Composition of systems of measures}\label{sec:composition}

The notion of composition of systems of measures appears in \S 1.3.a of \cite{renault-anantharaman-delaroche}, and is also mentioned briefly in \cite{revuz-book} (see Definition 1.5). Consider the diagram
$$\xymatrix{X\ar [rr]^{p}_{\alpha^{\bullet}}&&Y\ar [rr]^{q}_{\beta^{\bullet}}&&Z},$$
where $\alpha^{\bullet}$ is a BSM on $p:X \rightarrow Y$ and $\beta^{\bullet}$ is a system of measures  on $q:Y \rightarrow Z$.

\begin{mydef}\label{def:composition BSM}
We define the \textbf{composition} $(\beta\circ\alpha)^{\bullet}$ by $$(\beta\circ\alpha)^{z}(E)=\int_Y \alpha^{y}(E)\ d\beta^z(y) \ \ \ \ \ \forall z \in Z, \text{ and } E \subseteq X \text{ Borel. }$$
\end{mydef}

\begin{prop}\label{prop:composition BSM}
The composition $(\beta\circ\alpha)^{\bullet}$ is a system of measures on $q\circ p$. If $\alpha^{\bullet}$ and $\beta^{\bullet}$ are both BSMs, then $(\beta\circ\alpha)^{\bullet}$ is a BSM.
\end{prop}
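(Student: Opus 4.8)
The plan is to prove the two assertions separately. For the first, I must verify both defining conditions of a system of measures on $q\circ p$: that each $(\beta\circ\alpha)^z$ is a genuine Borel measure on $X$, and that it is concentrated on $(q\circ p)^{-1}(z)=p^{-1}(q^{-1}(z))$. I note at the outset that the defining integral makes sense precisely because $\alpha^{\bullet}$ is assumed to be a BSM: for each Borel $E\subseteq X$ the integrand $y\mapsto \alpha^y(E)$ is a nonnegative Borel function on $Y$, so $\int_Y \alpha^y(E)\,d\beta^z(y)$ is a well-defined element of $[0,\infty]$.

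To see that $(\beta\circ\alpha)^z$ is a measure, non-negativity and $(\beta\circ\alpha)^z(\emptyset)=0$ are immediate, so the content is countable additivity. Given pairwise disjoint Borel sets $\{E_n\}$ with union $E$, I would use countable additivity of each fixed measure $\alpha^y$ to write $\alpha^y(E)=\sum_n \alpha^y(E_n)$ pointwise in $y$, and then interchange the sum with the integral against $\beta^z$ by the Monotone Convergence Theorem applied to the increasing partial sums. This yields $(\beta\circ\alpha)^z(E)=\sum_n (\beta\circ\alpha)^z(E_n)$.

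The concentration property is the step requiring genuine care, and is where the structure of both systems enters. Writing $N=X\setminus(q\circ p)^{-1}(z)$, I would first observe that for any $y$ with $q(y)=z$ one has $p^{-1}(y)\subseteq(q\circ p)^{-1}(z)$, so that $N\subseteq X\setminus p^{-1}(y)$; since $\alpha^y$ is concentrated on $p^{-1}(y)$, this forces $\alpha^y(N)=0$ for every $y\in q^{-1}(z)$. Thus the integrand $y\mapsto\alpha^y(N)$ vanishes on $q^{-1}(z)$. Because $\beta^z$ is concentrated on $q^{-1}(z)$, splitting the integral over $q^{-1}(z)$ and its complement shows both pieces vanish, giving $(\beta\circ\alpha)^z(N)=\int_Y\alpha^y(N)\,d\beta^z(y)=0$, as required.

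Finally, for the BSM assertion I would invoke Lemma \ref{lem:equiv_BSM} in tandem. Fix a Borel set $E\subseteq X$. Since $\alpha^{\bullet}$ is a BSM, the function $g(y):=\alpha^y(E)$ is a nonnegative Borel function on $Y$. Since $\beta^{\bullet}$ is also a BSM, Lemma \ref{lem:equiv_BSM} applied to $g$ guarantees that $z\mapsto\int_Y g(y)\,d\beta^z(y)=(\beta\circ\alpha)^z(E)$ is Borel on $Z$, which is exactly the BSM condition for $(\beta\circ\alpha)^{\bullet}$. I expect the concentration computation to be the only genuinely delicate point; once the integrand is identified as vanishing on the set where $\beta^z$ is concentrated, everything else reduces to standard applications of the Monotone Convergence Theorem and Lemma \ref{lem:equiv_BSM}.
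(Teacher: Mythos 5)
Your proposal is correct and follows essentially the same route as the paper's proof: well-definedness from the BSM hypothesis on $\alpha^{\bullet}$, countable additivity via the Monotone Convergence Theorem applied to partial sums, concentration by noting the integrand $y\mapsto\alpha^y\bigl(X\setminus(q\circ p)^{-1}(z)\bigr)$ vanishes on $q^{-1}(z)$ where $\beta^z$ is concentrated, and the Borel property of $(\beta\circ\alpha)^{\bullet}(E)$ via Lemma \ref{lem:equiv_BSM}. No gaps to report.
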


\begin{proof}
Note that for any $z\in Z$ and any Borel subset $E \subseteq X$, $(\beta\circ\alpha)^{z}(E)$ is well defined, since $\alpha^{\bullet}(E)$ is a Borel function on $Y$ and $\beta^z$ is a Borel measure on $Y$. To prove that $(\beta\circ\alpha)^{z}$ is a Borel measure on $X$, let $\left\{E_n\right\}_{n=1}^{\infty}$ be a countable family of disjoint Borel subsets of $X$. Using a standard Monotone Convergence Theorem argument with $\sum_{n=1}^{k}\alpha^{y}\left( E_n \right) \nearrow  \sum_{n=1}^{\infty}\alpha^{y}\left( E_n\right)$, we obtain
\begin{eqnarray}
(\beta\circ\alpha)^{z}(\bigcup_{n=1}^{\infty} E_n) &=& \int_Y \alpha^{y}(\bigcup_{n=1}^{\infty} E_n)\ d\beta^z(y)=\int_Y \sum_{n=1}^{\infty}\alpha^{y}\left( E_n\right)\ d\beta^z(y)= \nonumber \\ &=& \sum_{n=1}^{\infty}\int_Y \alpha^{y}\left( E_n\right)\ d\beta^z(y) = \sum_{n=1}^{\infty}(\beta\circ\alpha)^{z}(E_n).\nonumber
\end{eqnarray}
To prove that $(\beta\circ\alpha)^{z}$ is concentrated on $(q\circ p)^{-1}(z)$, observe that
if $y\in q^{-1}(z)$ then $p^{-1}(y) \subseteq (q\circ p)^{-1}(z)$. Taking complements in $X$ we get $\alpha^y(X\setminus (q\circ p)^{-1}(z)) \leq \alpha^y(X\setminus p^{-1}(y)) = 0$. Since $\beta^z$ is concentrated on $q^{-1}(z)$, we obtain $$(\beta\circ\alpha)^{z}\left(X\setminus (q\circ p)^{-1}(z)\right)=\int_Y\alpha^y\left(X\setminus (q\circ p)^{-1}(z)\right)d\beta^z(y)=0.$$
We have shown that $(\beta\circ\alpha)^{\bullet}$ is a system of measures on $q \circ p$. Now assume that both $\alpha^{\bullet}$ and $\beta^{\bullet}$ are BSMs. Let $E\subseteq X$ be a Borel subset. Since $\alpha^{\bullet}$ is a BSM, the function $\alpha^{\bullet}(E)$ is a nonnegative Borel function on $Y$. But $\beta^{\bullet}$ is a BSM as well, so from Lemma \ref{lem:equiv_BSM} we have that $z\mapsto \int_Y \alpha^{y}(E)\ d\beta^z(y)$, which is precisely the function $(\beta\circ\alpha)^{\bullet}(E)$, is a Borel function on $Z$. Therefore, $(\beta\circ\alpha)^{\bullet}$ is a BSM. This completes the proof.
\end{proof}

\begin{prop}\label{prop:composition CSM}
If $\alpha^{\bullet}$ and $\beta^{\bullet}$ are both CSMs, then $(\beta\circ\alpha)^{\bullet}$ is a CSM.
\end{prop}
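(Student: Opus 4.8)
The plan is to reduce the continuity of $z \mapsto \int_X f\, d(\beta\circ\alpha)^z$ to the separate continuity properties of $\alpha^{\bullet}$ and $\beta^{\bullet}$ via a Fubini-type identity. First, since every CSM is a locally finite BSM by Proposition \ref{prop:CSM is BSM}, Proposition \ref{prop:composition BSM} already guarantees that $(\beta\circ\alpha)^{\bullet}$ is a well-defined BSM on $q\circ p$; in particular, it is a system of measures and all integrals appearing below are meaningful. It therefore remains only to verify the continuity condition in the definition of a CSM.

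The first key step is to establish, for every nonnegative Borel function $f:X \to [0,\infty]$, the iterated-integral formula
$$\int_X f(x)\, d(\beta\circ\alpha)^z(x) = \int_Y \left( \int_X f(x)\, d\alpha^y(x) \right) d\beta^z(y).$$
By Definition \ref{def:composition BSM} this holds when $f=\chi_{_E}$ for a Borel set $E$; by linearity it extends to nonnegative simple functions, and then to an arbitrary nonnegative Borel $f$ by the Monotone Convergence Theorem. Here one applies the Monotone Convergence Theorem repeatedly: once on the left, where $(\beta\circ\alpha)^z$ is a measure, and twice on the right, first to the inner integral against $\alpha^y$ for each fixed $y$ and then to the outer integral against $\beta^z$, using Lemma \ref{lem:equiv_BSM} to know that $y \mapsto \int_X f\, d\alpha^y$ is Borel (hence $\beta^z$-measurable). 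This is simply the function-level analogue of the defining formula on sets.

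Now fix a nonnegative $f\in C_c(X)$ and set $F(y) = \int_X f(x)\, d\alpha^y(x)$. Because $\alpha^{\bullet}$ is a CSM, $F$ is continuous on $Y$; moreover, by Corollary \ref{cor:CSM has compact support}, $F$ in fact lies in $C_c(Y)$. This upgrade from mere continuity to compact support is the crucial point, and I expect it to be the main obstacle to a naive argument: in order to invoke the CSM property of $\beta^{\bullet}$ one needs a compactly supported continuous test function on $Y$, not merely a continuous one. With this in hand, the identity of the previous paragraph reads
$$\int_X f(x)\, d(\beta\circ\alpha)^z(x) = \int_Y F(y)\, d\beta^z(y),$$
and since $F\in C_c(Y)$ and $\beta^{\bullet}$ is a CSM, the right-hand side is a continuous function of $z$. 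Hence $z \mapsto \int_X f(x)\, d(\beta\circ\alpha)^z(x)$ is continuous on $Z$ for every nonnegative $f\in C_c(X)$, which is precisely the requirement for $(\beta\circ\alpha)^{\bullet}$ to be a CSM. This completes the proof.
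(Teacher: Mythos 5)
Your proposal is correct and follows essentially the same route as the paper: define the inner integral $F(y)=\int_X f\,d\alpha^y$, upgrade it to an element of $C_c(Y)$ via Corollary \ref{cor:CSM has compact support}, apply the CSM property of $\beta^{\bullet}$, and conclude through the Fubini-type identity $\int_Y F\,d\beta^z=\int_X f\,d(\beta\circ\alpha)^z$. The only difference is that you spell out the proof of that identity (characteristic functions, simple functions, Monotone Convergence) and the well-definedness via Propositions \ref{prop:CSM is BSM} and \ref{prop:composition BSM}, steps the paper leaves implicit.
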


\begin{proof}
Let $f\in C_c(X)$. We need to show that the map $z \mapsto \int_X f(x)d (\beta \circ \alpha)^z (x)$ is a continuous function on $Z$. Define $g(y) = \int_X f(x)d\alpha^y (x)$. Since $\alpha^{\bullet}$ is a CSM, Corollary \ref{cor:CSM has compact support} implies that $g(y) \in C_c(Y)$. From the fact that $\beta^{\bullet}$ is a CSM we now get that the map $z \mapsto \int_Y g(y) d\beta^z(y) \in C_c(Z)$. This completes the proof, since
$$\int_Y g(y) d\beta^z(y) = \int_Y \left(\int_X f(x)d\alpha^y (x) \right) d\beta^z(y) = \int_X f(x)d (\beta \circ \alpha)^z (x).$$
\end{proof}

\begin{prop}\label{prop:composition pos and bdd}
Consider the setting of Definition \ref{def:composition BSM}.
\begin{enumerate}
\item
Assume that $p$ is an open map. If $\alpha^{\bullet}$ and $\beta^{\bullet}$ are positive on open sets then so is $(\beta\circ\alpha)^{\bullet}$.
\item
Assume that $p$ is a continuous map. If $\alpha^{\bullet}$ and $\beta^{\bullet}$ are locally bounded then so is $(\beta\circ\alpha)^{\bullet}$.
\end{enumerate}
\end{prop}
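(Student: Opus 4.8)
The plan is to treat the two parts separately, both hinging on the observation that $\alpha^y$ is concentrated on $p^{-1}(y)$, so that for any Borel set $B\subseteq X$ one has $\alpha^y(B)=0$ whenever $B\cap p^{-1}(y)=\emptyset$, i.e. whenever $y\notin p(B)$. This lets me control precisely which values of $y$ contribute to the integral $(\beta\circ\alpha)^z(B)=\int_Y\alpha^y(B)\,d\beta^z(y)$. Throughout, the integrals are well defined because we are in the setting of Definition \ref{def:composition BSM}, where $\alpha^{\bullet}$ is a BSM.

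For part \emph{(1)}, I would fix $z\in Z$ and an open set $A\subseteq X$ meeting $(q\circ p)^{-1}(z)$, and pick $x\in A\cap(q\circ p)^{-1}(z)$, so that $y_0:=p(x)$ satisfies $q(y_0)=z$. First I would note that, since $\alpha^{\bullet}$ is positive on open sets, $\alpha^y(A)>0$ for every $y\in p(A)$ (because $A\cap p^{-1}(y)\neq\emptyset$ exactly for such $y$). Next, because $p$ is open, $p(A)$ is an \emph{open} subset of $Y$, and it contains $y_0\in q^{-1}(z)$, so $p(A)\cap q^{-1}(z)\neq\emptyset$; positivity of $\beta^{\bullet}$ on open sets then gives $\beta^z(p(A))>0$. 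Finally, $(\beta\circ\alpha)^z(A)\geq\int_{p(A)}\alpha^y(A)\,d\beta^z(y)$ is the integral of a strictly positive function over a set of positive $\beta^z$-measure, hence strictly positive. The only elementary fact needed is that $\int_S f\,d\mu>0$ when $f>0$ on $S$ and $\mu(S)>0$.

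For part \emph{(2)}, I would fix $x\in X$ and set $y_0:=p(x)$. By local boundedness of $\beta^{\bullet}$ there is a neighborhood $V$ of $y_0$ and $C_2>0$ with $\beta^z(V)<C_2$ for all $z\in Z$, and by local boundedness of $\alpha^{\bullet}$ there is a neighborhood $U$ of $x$ and $C_1>0$ with $\alpha^y(U)<C_1$ for all $y\in Y$. Using continuity of $p$, I would then set $W:=U\cap p^{-1}(V)$, an open neighborhood of $x$. The point of intersecting with $p^{-1}(V)$ is that $p(W)\subseteq V$, so the concentration observation forces $\alpha^y(W)=0$ for every $y\notin V$; together with $\alpha^y(W)\leq\alpha^y(U)<C_1$ this yields
$$(\beta\circ\alpha)^z(W)=\int_V\alpha^y(W)\,d\beta^z(y)\leq C_1\,\beta^z(V)<C_1C_2$$
uniformly in $z$, so $(\beta\circ\alpha)^{\bullet}$ is locally bounded with $C=C_1C_2$.

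I expect the only real obstacle to be in part \emph{(2)}: the naive bound $(\beta\circ\alpha)^z(U)\leq C_1\,\beta^z(Y)$ is useless, since $\beta^z(Y)$ may be infinite. The essential idea is therefore to localize the $y$-integration to the neighborhood $V$ on which $\beta^z$ is uniformly controlled, which is made possible exactly by combining the concentration of $\alpha^y$ on fibers with the continuity of $p$ (used to pull $V$ back to a neighborhood of $x$). In part \emph{(1)} there is no comparable difficulty; one only has to notice that openness of $p$ is precisely what guarantees that $p(A)$ is open, so that the positivity of $\beta^{\bullet}$ can be applied to it.
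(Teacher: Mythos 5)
Your proof is correct and takes essentially the same approach as the paper's: in part \emph{(1)} you use openness of $p$ to make $p(A)$ an open set of positive $\beta^z$-measure on which $\alpha^{\bullet}(A)>0$, and in part \emph{(2)} you use continuity of $p$ together with the fiber-concentration of $\alpha^y$ to localize the integral to a neighborhood on which $\beta^z$ is uniformly bounded. One small refinement on your side: by integrating over the open set $V$ rather than over $p(U)$ (as the paper does), you sidestep the question of whether $p(U)$, the continuous image of an open set, is Borel.
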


\begin{proof}
(1) Fix $z \in Z$ and let $A\subseteq X$ be an open set satisfying $A\cap (q\circ p)^{-1}(z)\neq \emptyset$. We need to show that $(\beta\circ\alpha)^{z}(A) >0$. The set $p(A)\cap q^{-1}(z)$ is not empty since $$\emptyset \neq p\left(A\cap (q\circ p)^{-1}(z)\right) \subseteq p(A)\cap p(p^{-1}(q^{-1}(z))) \subseteq p(A)\cap q^{-1}(z).$$ Furthermore, $p$ is assumed to be an open map, so $p(A)$ is open in $Y$. This implies that $\beta^z(p(A))>0$, since $\beta^{\bullet}$ is positive on open sets. Obviously, for every $y\in p(A)$ there exists $x\in A$ such that $p(x)=y$, hence $p^{-1}(y)\cap A\neq\emptyset$. This implies that $\alpha^y(A)>0$ for every $y\in p(A)$, since $\alpha^{\bullet}$ is positive on open sets. We conclude that $$(\beta\circ\alpha)^{z}(A)=\int_Y \alpha^{y}(A)\ d\beta^z(y)\geq \int_{p(A)} \alpha^{y}(A)\ d\beta^z(y)>0.$$
(2) Take $x \in X$. Since $\beta^{\bullet}$ is locally bounded, there exists an open neighborhood $V$ of $p(x)$ and a constant $C_2$ such that $\beta^z(V) < C_2$ for every $z \in Z$. Since $p$ is continuous and $\alpha^{\bullet}$ is locally bounded, there exists an open neighborhood $U$ of $x$ and a constant $C_1$ such that $p(U) \subseteq V$ and $\alpha^y(U) < C_1$ for every $y \in Y$. Note that if $y \notin p(U)$, then $p^{-1}(y)\cap U=\emptyset$. Hence $\alpha^y(U)=0$ for all $y \notin p(U)$. We therefore have $$(\beta\circ\alpha)^{z}(U)=\int_Y \alpha^{y}(U)\ d\beta^z(y)=\int_{p(U)} \alpha^{y}(U)\ d\beta^z(y)\leq C_1\cdot\beta^z(p(U))\leq C_1\cdot C_2$$ for every $z\in Z$.
\end{proof}

In general, the composition of locally finite systems of measures need not be locally finite. In order to assure local finiteness of the composition we need to require a stronger property of the system $\alpha^{\bullet}$. We omit the proof of the following Lemma, which is an obvious modification of the proof of the second part of Proposition \ref{prop:composition pos and bdd}.

\begin{lem}\label{lem:condition for composition locally finite}
Consider the setting of Definition \ref{def:composition BSM}, and assume that the map $p$ is continuous. If $\alpha^{\bullet}$ is locally bounded and $\beta^{\bullet}$ is locally finite then $(\beta\circ\alpha)^{\bullet}$ is locally finite.
\end{lem}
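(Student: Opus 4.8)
The plan is to follow essentially verbatim the argument for the second part of Proposition~\ref{prop:composition pos and bdd}, making the single adjustment dictated by the weaker hypothesis on $\beta^{\bullet}$: where that proof extracted a \emph{uniform} bound $\beta^z(V) < C_2$ from the local boundedness of $\beta^{\bullet}$, we now extract only \emph{finiteness} $\beta^z(V) < \infty$ from its local finiteness. Correspondingly, the conclusion weakens from local boundedness of the composition to mere local finiteness, which is exactly what the lemma asserts.

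Concretely, I would fix $x \in X$ and first invoke local finiteness of $\beta^{\bullet}$ to produce an open neighborhood $V$ of $p(x)$ with $\beta^z(V) < \infty$ for every $z \in Z$. Next, since $p$ is continuous, $p^{-1}(V)$ is an open neighborhood of $x$; intersecting it with a neighborhood supplied by the local boundedness of $\alpha^{\bullet}$, I obtain an open neighborhood $U$ of $x$ together with a constant $C_1 > 0$ such that $p(U) \subseteq V$ and $\alpha^y(U) < C_1$ for every $y \in Y$. As in the original proof, concentration of $\alpha^y$ on $p^{-1}(y)$ forces $\alpha^y(U) = 0$ whenever $y \notin p(U)$, so the integrand $\alpha^{\bullet}(U)$ is dominated by $C_1 \chi_{_V}$ and the integral defining the composition collapses onto $p(U)$.

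The final estimate then reads
$$(\beta\circ\alpha)^{z}(U) = \int_Y \alpha^{y}(U)\, d\beta^z(y) = \int_{p(U)} \alpha^{y}(U)\, d\beta^z(y) \leq C_1 \cdot \beta^z\big(p(U)\big) \leq C_1 \cdot \beta^z(V) < \infty$$
for every $z \in Z$, which is precisely the local finiteness of $(\beta\circ\alpha)^{\bullet}$.

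I do not expect a genuine obstacle here; the only subtlety worth flagging is \emph{why} the hypotheses are asymmetric. The uniform constant $C_1$ furnished by the local \emph{boundedness} of $\alpha^{\bullet}$ is exactly what lets us pull a scalar outside the integral; had $\alpha^{\bullet}$ been merely locally finite, $\alpha^y(U)$ could grow without bound as $y$ ranges over $p(U)$ and the integral against $\beta^z$ could diverge, so this hypothesis cannot be relaxed. By contrast, local finiteness of $\beta^{\bullet}$ suffices because we only need the resulting quantity $C_1 \cdot \beta^z(V)$ to be finite for each individual $z$, not uniformly bounded across $Z$ — in step with the weaker conclusion being proved.
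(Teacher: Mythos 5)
Your proposal is correct and is precisely the argument the paper intends: the paper omits the proof, stating it is ``an obvious modification of the proof of the second part of Proposition \ref{prop:composition pos and bdd}'', and your adjustment (replacing the uniform bound $\beta^z(V) < C_2$ by pointwise finiteness $\beta^z(V) < \infty$, keeping the constant $C_1$ from the local boundedness of $\alpha^{\bullet}$ to pull out of the integral) is exactly that modification. Your closing remark on why the asymmetry of the hypotheses cannot be relaxed is a nice bonus beyond what the paper records.
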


We have seen in Lemma \ref{lem:CSM always locally bounded} that any CSM is locally bounded. Taken together with Lemma \ref{lem:condition for composition locally finite} and the fact that a locally bounded system is in particular locally finite, this implies that the composition is guaranteed to be locally finite in several more scenarios.

\begin{cor}\label{cor:conditions for composition locally finite}
Consider the setting of Definition \ref{def:composition BSM}, and assume that the map $p$ is continuous. Each of the following conditions implies that $(\beta\circ\alpha)^{\bullet}$ is locally finite.
\begin{enumerate}
\item
$\alpha^{\bullet}$ is a CSM and $\beta^{\bullet}$ is locally finite.
\item
$\alpha^{\bullet}$ and $\beta^{\bullet}$ are both locally bounded.
\item
$\alpha^{\bullet}$ and $\beta^{\bullet}$ are both CSMs.
\item
either $\alpha^{\bullet}$ or $\beta^{\bullet}$ is a CSM and the other is locally bounded.
\end{enumerate}
\end{cor}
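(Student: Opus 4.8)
The plan is to reduce all four cases to the single hypothesis of Lemma \ref{lem:condition for composition locally finite}, namely that $\alpha^{\bullet}$ is locally bounded and $\beta^{\bullet}$ is locally finite. The two facts I would invoke repeatedly are that every CSM is locally bounded (Lemma \ref{lem:CSM always locally bounded}), and that every locally bounded system is locally finite (as observed in the text just before Corollary \ref{cor:CSM is locally finite}). Since the map $p$ is assumed continuous throughout, Lemma \ref{lem:condition for composition locally finite} applies as soon as these two properties are established, and the conclusion that $(\beta\circ\alpha)^{\bullet}$ is locally finite follows immediately.

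First I would check that in each case $\alpha^{\bullet}$ is locally bounded. In case (1), and in whichever subcase of (3) and (4) has $\alpha^{\bullet}$ a CSM, this is exactly Lemma \ref{lem:CSM always locally bounded}; in case (2), and in the remaining subcase of (4), it is assumed outright. Next I would check that $\beta^{\bullet}$ is locally finite. In case (1) this is assumed directly; in case (2), $\beta^{\bullet}$ is locally bounded and hence locally finite; in case (3), $\beta^{\bullet}$ is a CSM, hence locally bounded, hence locally finite; and in case (4), $\beta^{\bullet}$ is either a CSM or locally bounded, so in both instances it is locally finite. Chaining the two implications $\text{CSM} \Rightarrow \text{locally bounded} \Rightarrow \text{locally finite}$ handles every case uniformly.

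I do not anticipate any genuine obstacle here, as the corollary is a direct consequence of Lemma \ref{lem:condition for composition locally finite} together with Lemma \ref{lem:CSM always locally bounded}. The only point requiring a little care is the bookkeeping in case (4): one must split into the two subcases according to which of $\alpha^{\bullet}$ and $\beta^{\bullet}$ is the CSM, and verify in each that the \emph{first} system $\alpha^{\bullet}$ ends up locally bounded while the \emph{second} system $\beta^{\bullet}$ ends up locally finite, since Lemma \ref{lem:condition for composition locally finite} treats the two systems asymmetrically. Once this case analysis is laid out, the proof is a routine application of the cited results.
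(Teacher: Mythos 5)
Your proof is correct and is essentially identical to the paper's own justification: the paper derives the corollary, in the paragraph preceding it, precisely by combining Lemma \ref{lem:condition for composition locally finite} with Lemma \ref{lem:CSM always locally bounded} and the observation that locally bounded implies locally finite. Your careful handling of the asymmetry in case (4) is a fine point the paper leaves implicit, but the route is the same.
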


As a particular case of Lemma \ref{lem:condition for composition locally finite} we obtain the following useful result. The proof amounts to taking $Z = \{z\}$, viewing $\beta$ as a trivial system of measures on the projection $\pi:Y \rightarrow \{z\}$ and applying Lemma \ref{lem:condition for composition locally finite}.

\begin{cor}\label{cor:locally finite composition of BSM and measure}
Let $\alpha^{\bullet}$ be a locally bounded BSM on a continuous map $p:X \rightarrow Y$ and let $\beta$ be a locally finite measure on $Y$. For every Borel set $E \subseteq X$, define $$\mu(E) = \int_{Y} \alpha^y (E) d\beta (y) .$$ Then $\mu$ is a locally finite measure on $X$.
\end{cor}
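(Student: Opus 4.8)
The plan is to realize $\mu$ as a composition of systems of measures over a one-point space, so that the claim becomes a direct instance of Lemma \ref{lem:condition for composition locally finite}, exactly as the surrounding text indicates. Concretely, I would introduce a singleton $Z = \{z\}$ together with the (unique) constant map $q: Y \rightarrow Z$, and place everything inside the diagram $X \xrightarrow{p} Y \xrightarrow{q} Z$ underlying Definition \ref{def:composition BSM}.

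Next I would recast the measure $\beta$ as a system of measures $\beta^{\bullet} = \{\beta^z\}$ on $q$, using the observation recorded earlier that a system of measures on a map into a singleton is nothing but a single Borel measure; here one sets $\beta^z := \beta$. This is a legitimate system of measures on $q$ because $\beta^z$ is trivially concentrated on $q^{-1}(z) = Y$. Moreover, under this identification $\beta^{\bullet}$ is a locally finite system of measures precisely when $\beta$ is a locally finite measure: for each $y \in Y$ the requirement that some neighborhood $V_y$ satisfy $\beta^z(V_y) < \infty$ for all $z \in Z$ collapses, since $Z = \{z\}$, to $\beta(V_y) < \infty$. Thus all hypotheses of Lemma \ref{lem:condition for composition locally finite} are in place: $p$ is continuous, $\alpha^{\bullet}$ is a locally bounded BSM, and $\beta^{\bullet}$ is a locally finite system of measures.

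I would then form the composition $(\beta \circ \alpha)^{\bullet}$ on $q \circ p: X \rightarrow Z$. By Proposition \ref{prop:composition BSM} this is a genuine system of measures, so in particular the single measure $(\beta\circ\alpha)^z$ is a Borel measure on $X$; and by the defining formula
$$(\beta\circ\alpha)^z(E) = \int_Y \alpha^y(E)\, d\beta^z(y) = \int_Y \alpha^y(E)\, d\beta(y) = \mu(E)$$
for every Borel $E \subseteq X$, this measure is exactly $\mu$. This already shows that $\mu$ is a Borel measure on $X$. Finally, Lemma \ref{lem:condition for composition locally finite} guarantees that $(\beta\circ\alpha)^{\bullet}$ is locally finite, meaning every $x \in X$ has a neighborhood $U_x$ with $(\beta\circ\alpha)^z(U_x) < \infty$; since $Z = \{z\}$ this is precisely $\mu(U_x) < \infty$, i.e. $\mu$ is locally finite.

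I do not anticipate a genuine obstacle here: the entire content is the bookkeeping of the dictionary between measures on $X$ and systems of measures on the projection $X \rightarrow \{z\}$, together with the verification that the two flavors of \emph{locally finite} correspond under that dictionary. The only point demanding a little care is applying this identification consistently on both ends — treating $\beta$ as a system on the target side of the composition, and reading off $\mu$ as a system on the source side — so that the conclusion of Lemma \ref{lem:condition for composition locally finite} transfers correctly back to a statement about the measure $\mu$ itself.
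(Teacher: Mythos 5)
Your proposal is correct and is exactly the paper's proof: the text preceding the corollary states that the proof "amounts to taking $Z = \{z\}$, viewing $\beta$ as a trivial system of measures on the projection $\pi:Y \rightarrow \{z\}$ and applying Lemma \ref{lem:condition for composition locally finite}." Your write-up simply spells out the bookkeeping (including the appeal to Proposition \ref{prop:composition BSM} for countable additivity of $\mu$) that the paper leaves implicit.
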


\section{Lifting of systems of measures}\label{sec:lifting}

The concept of lifting, which we define below, is discussed in Appendix A.1 of \cite{renault-anantharaman-delaroche}, in the broader context of transverse measure theory.
Let $X$, $Y$ and $Z$ be topological spaces, and let $p:X \rightarrow Z$ and $q:Y \rightarrow Z$ be Borel maps. The usual \textbf{pullback} of $X$ and $Y$ over $Z$  is the space $$X *_Z Y = \{(x,y) \in X \times Y~:~p(x) = q(y) \}.$$ In order to lighten notation, we will usually write $X * Y$, keeping $Z$ implicit. The topology on $X * Y$ is inherited from the product topology on $X \times Y$. Consider the pullback diagram
$$\xymatrix{X * Y \ar [dd]_{\pi_X}\ar [rr]^{\pi_Y}&&Y\ar [dd]_{q}\\\\
X\ar [rr]^{p}_{\alpha^{\bullet}}&&Z}$$ where $\pi_X$ and $\pi_Y$ are the obvious projections, and $\alpha^{\bullet}$ is a system of measures on $p:X \rightarrow Z$. Observe that the fibers of the map $\pi_Y$ are Cartesian products of the form $\pi_Y^{-1}(y)=p^{-1}(q(y))\times\{y\}$. \\

We will assume throughout this section that $\alpha^{\bullet}$ is a \textit{locally finite} system of measures.

\begin{mydef}\label{def:lifting BSM}
The \textbf{lifting} of the locally finite system of measures $\alpha^{\bullet}$ to $\pi_Y$, denoted $(q^*\alpha)^{\bullet}$, is given by $$(q^*\alpha)^y=\alpha^{q(y)}\times \delta_y.$$ More precisely, $(q^*\alpha)^{y}(E)=(\alpha^{q(y)}\times \delta_y)(E\cap \pi_Y^{-1}(y))$ for every $y\in Y$ and every Borel set $E\subseteq X*Y$.
\end{mydef}

\begin{rem}
If $\beta^{\bullet}$ is a locally finite system of measures on $q:Y \rightarrow Z$, then the lifting $(p^*\beta)^{\bullet}$ to $\pi_X$ is defined similarly, by $(p^*\beta)^x=\delta_x\times \beta^{p(x)}.$ The properties of the lifting $(q^*\alpha)^{\bullet}$ which we state and prove below, hold for $(p^*\beta)^{\bullet}$ as well, with the obvious modifications.
\end{rem}

\begin{rem}\label{rem:elementary open sets}
In the sequel, we will make frequent use of open sets $E \subseteq X*Y$ of the form $E=(A\times B)\cap (X*Y)$, where $A$ and $B$ are open sets in $X$ and $Y$ respectively. We will refer to these as \textit{elementary open sets}. For any elementary open set we have
\begin{eqnarray}
(q^*\alpha)^{y}(E)&=&(q^*\alpha)^{y}(E\cap \pi_Y^{-1}(y))\label{calc:lifting}\nonumber\\
                  &=&(q^*\alpha)^{y}\left((A\cap p^{-1}(q(y)))\times (B\cap\{y\})\right)\\
                  &=&\alpha^{q(y)}(A\cap p^{-1}(q(y)))\cdot \delta_y(B\cap\{y\})\nonumber\\
                  &=&\alpha^{q(y)}(A)\cdot \delta_y(B).\nonumber
\end{eqnarray}
If $\{A_n\}_{n=1}^{\infty}$ and $\{B_m\}_{m=1}^{\infty}$ are countable bases for the topologies of $X$ and $Y$ respectively, we can set ${\mathcal B}=\{(A_n\times B_m)\cap X*Y\}_{n,m=1}^{\infty}$. This gives a countable basis ${\mathcal B}$ for the topology of $X*Y$ consisting of elementary open sets.
\end{rem}

\begin{prop}\label{prop:lifting BSM}
The lifting $(q^*\alpha)^{\bullet}$ is a locally finite system of measures on $\pi_Y$. If $\alpha^{\bullet}$ is a BSM, then so is $(q^*\alpha)^{\bullet}$.
\end{prop}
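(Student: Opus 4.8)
The plan is to verify directly the three defining properties of a locally finite system of measures, and then to deduce the BSM claim by invoking the criterion of Lemma \ref{lem:criterion for locally finite BSM} against the basis of elementary open sets introduced in Remark \ref{rem:elementary open sets}.

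First I would confirm that each $(q^*\alpha)^y$ is a Borel measure on $X*Y$. Because $\alpha^{\bullet}$ is locally finite and $X$ is second countable, each $\alpha^{q(y)}$ is $\sigma$-finite, so the product $\alpha^{q(y)}\times\delta_y$ is a legitimate Borel measure on $X\times Y$; restricting it to the Borel fiber $\pi_Y^{-1}(y)=p^{-1}(q(y))\times\{y\}$ as in Definition \ref{def:lifting BSM} yields a Borel measure on $X*Y$, with countable additivity inherited from the product and the restriction. That $(q^*\alpha)^y$ is concentrated on $\pi_Y^{-1}(y)$ is immediate from the defining formula, since any set disjoint from the fiber is assigned measure zero. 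For local finiteness I would use the local finiteness of $\alpha^{\bullet}$: given $(x,y)\in X*Y$, choose a neighborhood $U_x$ of $x$ in $X$ with $\alpha^z(U_x)<\infty$ for all $z\in Z$, and set $U=(U_x\times Y)\cap(X*Y)$. This is a neighborhood of $(x,y)$, and the elementary open set formula of Remark \ref{rem:elementary open sets} gives
$$(q^*\alpha)^{y'}(U)=\alpha^{q(y')}(U_x)\cdot\delta_{y'}(Y)=\alpha^{q(y')}(U_x)<\infty$$
for every $y'\in Y$, as required.

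For the BSM statement, the strategy is to apply Lemma \ref{lem:criterion for locally finite BSM} with the countable basis $\mathcal{B}$ of elementary open sets from Remark \ref{rem:elementary open sets}. The key structural observation is that a finite intersection of elementary open sets is again elementary open, since $(A\times B)\cap(A'\times B')=(A\cap A')\times(B\cap B')$; this lets me bypass arbitrary Borel sets and reduce the verification to a single elementary open set. For $E=(A\times B)\cap(X*Y)$, Remark \ref{rem:elementary open sets} yields $(q^*\alpha)^y(E)=\alpha^{q(y)}(A)\cdot\chi_{_B}(y)$. Since $\alpha^{\bullet}$ is a BSM, $z\mapsto\alpha^z(A)$ is Borel on $Z$; precomposing with the Borel map $q$ shows $y\mapsto\alpha^{q(y)}(A)$ is Borel, and $\chi_{_B}$ is Borel as well, so their product is Borel. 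Thus the hypotheses of Lemma \ref{lem:criterion for locally finite BSM} are met and $(q^*\alpha)^{\bullet}$ is a locally finite BSM.

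I expect the substance of the argument to be conceptual rather than computational: the decisive move is to recognize that one should never handle a general Borel $E$ directly, but instead route everything through the criterion lemma, exploiting that the class of elementary open sets is closed under finite intersection and that the lifting measure factors on them as a product of an $\alpha$-measure with a Dirac mass. The only genuinely measure-theoretic care is needed in the very first step, in ensuring that the product measure and its restriction to the Borel fiber are bona fide Borel measures; this is precisely where second countability (giving both $\sigma$-finiteness and the coincidence of the Borel and product $\sigma$-algebras on $X\times Y$) and local finiteness do their work.
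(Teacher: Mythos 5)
Your proposal is correct and follows essentially the same route as the paper's own proof: the same local finiteness argument via a neighborhood $(U_x\times Y)\cap(X*Y)$, the same product formula $(q^*\alpha)^{y}(E)=\alpha^{q(y)}(A)\cdot\delta_y(B)$ on elementary open sets, and the same reduction to Lemma \ref{lem:criterion for locally finite BSM} using the fact that elementary open sets form a countable basis closed under finite intersections. No gaps; nothing further is needed.
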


\begin{proof}
As a product of locally finite (hence $\sigma$-finite) Borel measures, $(q^*\alpha)^{y}$ is a well defined Borel measure for every $y\in Y$. By definition it is concentrated on $p^{-1}(q(y))\times\{y\} = \pi_Y^{-1}(y)$.
Let $(x,y) \in X*Y$. Since $\alpha^{\bullet}$ is locally finite, there exists a neighborhood $U_x$ of $x$ such that $\alpha^z(U_x) < \infty$ for all $z \in Z$. By calculation (\ref{calc:lifting}) above, the open neighborhood $(U_x \times Y) \cap (X*Y)$ of $(x,y)$ satisfies $(q^*\alpha)^y\left((U_x \times Y) \cap (X*Y)\right) = \alpha^{q(y)}(U_x)\cdot \delta_y(Y) = \alpha^{q(y)}(U_x) < \infty$ for every $y \in Y$, hence $(q^*\alpha)^{\bullet}$ is a locally finite system of measures.

Now assume that $\alpha^{\bullet}$ is a BSM. In order to prove that $(q^*\alpha)^{\bullet}$ is a BSM, we show first that $(q^*\alpha)^{\bullet}(E)$ is a Borel function for any elementary open set $E=(A\times B)\cap (X*Y)$. For such $E$ we have, by calculation (\ref{calc:lifting}), that $(q^*\alpha)^{y}(E) = \alpha^{q(y)}(A)\cdot \delta_y(B).$ Therefore, if we denote the composition of the Borel functions $\alpha^{\bullet}(A)$ and $q$ by $\alpha^{q(\bullet)}(A)$, we can write $(q^*\alpha)^{\bullet}(E)=\alpha^{q(\bullet)}(A)\cdot \chi_{_B}$. Thus $(q^*\alpha)^{\bullet}(E)$ is a Borel function.

Finite intersections of elementary open sets are themselves elementary open sets, and thus the basis ${\mathcal B}$ as in Remark \ref{rem:elementary open sets} satisfies the hypotheses of Lemma \ref{lem:criterion for locally finite BSM}. We conclude that $(q^*\alpha)^{\bullet}$ is a BSM.
\end{proof}

\begin{lem}\label{lem:continuity lemma}
Let $\mathcal{X}$, $\mathcal{Y}$ and $\mathcal{Z}$ be topological spaces and let $\gamma^{\bullet}$ be a CSM on $\phi:\mathcal{Y}\rightarrow \mathcal{Z}$. For every $\psi\in C_c(\mathcal{X}\times \mathcal{Y})$, the function $\displaystyle (x,z)\mapsto \int_{\mathcal{Y}} \psi(x,y)d\gamma^z(y)$ belongs to $C_c(\mathcal{X}\times \mathcal{Z})$.
\end{lem}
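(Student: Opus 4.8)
The plan is to prove the two assertions hidden in the statement separately: that the function
$$F_\psi(x,z) := \int_{\mathcal{Y}} \psi(x,y)\,d\gamma^z(y)$$
is continuous on $\mathcal{X}\times\mathcal{Z}$, and that it has compact support. Continuity I would obtain by a Stone--Weierstrass approximation reducing the coupled integrand $\psi$ to finite sums of ``pure tensors'' $f(x)g(y)$, for which everything is transparent; compact support I would handle by a two-variable analogue of the argument in Lemma \ref{lem:CSM_support}.

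The crucial observation is that when $\psi(x,y)=f(x)g(y)$ with $f\in C_c(\mathcal{X})$ and $g\in C_c(\mathcal{Y})$, we have $F_\psi(x,z)=f(x)\int_{\mathcal{Y}}g(y)\,d\gamma^z(y)$, and the inner integral is continuous in $z$ since $\gamma^{\bullet}$ is a CSM (in fact it lies in $C_c(\mathcal{Z})$ by Corollary \ref{cor:CSM has compact support}); hence $F_\psi$ is continuous, and so is $F_\psi$ for any finite sum of such pure tensors. To reach a general $\psi\in C_c(\mathcal{X}\times\mathcal{Y})$, note $supp(\psi)$ is compact, so its projections $K_{\mathcal{X}}=\pi_{\mathcal{X}}(supp(\psi))$ and $K_{\mathcal{Y}}=\pi_{\mathcal{Y}}(supp(\psi))$ are compact. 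I would fix relatively compact open neighborhoods $V_{\mathcal{X}}\supseteq K_{\mathcal{X}}$ and $V_{\mathcal{Y}}\supseteq K_{\mathcal{Y}}$ (local compactness) together with Urysohn cut-offs $h_{\mathcal{X}}\in C_c(\mathcal{X})$, $h_{\mathcal{Y}}\in C_c(\mathcal{Y})$, with $0\le h\le 1$, equal to $1$ on $K_{\mathcal{X}}$, $K_{\mathcal{Y}}$ and supported in $V_{\mathcal{X}}$, $V_{\mathcal{Y}}$. On the compact set $\overline{V_{\mathcal{X}}}\times\overline{V_{\mathcal{Y}}}$ the algebra generated by pure tensors separates points, is closed under conjugation, and contains the constants, so Stone--Weierstrass provides finite tensor sums $\Phi_n$ with $\Phi_n\to\psi$ uniformly there. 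Multiplying by $h_{\mathcal{X}}(x)h_{\mathcal{Y}}(y)$ and extending by zero yields approximants $\Psi_n\in C_c(\mathcal{X})\otimes C_c(\mathcal{Y})$ which, since $\psi=h_{\mathcal{X}}h_{\mathcal{Y}}\psi$ and $0\le h\le 1$, still satisfy $\Psi_n\to\psi$ uniformly on all of $\mathcal{X}\times\mathcal{Y}$, and--this is the point--all have $y$-support inside the fixed compact set $\overline{V_{\mathcal{Y}}}$.

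The step I expect to be the main obstacle is passing this uniform limit through the integral transform. Writing $\varepsilon_n=\|\Psi_n-\psi\|_\infty$, I would estimate $|F_{\Psi_n}(x,z)-F_\psi(x,z)|\le\int_{\mathcal{Y}}|\Psi_n-\psi|\,d\gamma^z\le\varepsilon_n\,\gamma^z(\overline{V_{\mathcal{Y}}})$, using that $\Psi_n-\psi$ vanishes off $\overline{V_{\mathcal{Y}}}$ in the $y$-variable. The danger is that $\gamma^z(\overline{V_{\mathcal{Y}}})$ might blow up as $z$ varies; this is precisely where Lemma \ref{lem:CSM always locally bounded} enters, since a CSM is bounded on compact sets, giving a constant $M$ with $\gamma^z(\overline{V_{\mathcal{Y}}})\le M$ for all $z$. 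Hence $|F_{\Psi_n}-F_\psi|\le M\varepsilon_n\to 0$ uniformly, and $F_\psi$ is a uniform limit of the continuous functions $F_{\Psi_n}$, therefore continuous.

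For compact support I would argue as in Lemma \ref{lem:CSM_support}, now in two variables: if $F_\psi(x,z)\neq 0$ then $\psi(x,\cdot)$ is not $\gamma^z$-null, and since $\gamma^z$ is concentrated on $\phi^{-1}(z)$ there is a point $y\in\phi^{-1}(z)$ with $(x,y)\in supp(\psi)$; thus $x\in K_{\mathcal{X}}$ and $z=\phi(y)\in\phi(K_{\mathcal{Y}})$. The set where $F_\psi\neq 0$ is therefore contained in $K_{\mathcal{X}}\times\phi(K_{\mathcal{Y}})$, a product of continuous images of the compact set $supp(\psi)$, hence compact and (by the Hausdorff assumption) closed. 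Consequently $supp(F_\psi)$ is compact, and $F_\psi\in C_c(\mathcal{X}\times\mathcal{Z})$.
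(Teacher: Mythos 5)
Your proof is correct, and its continuity half takes a genuinely different route from the paper's. The paper argues directly: for fixed $x_0$ it establishes an equicontinuity estimate $\sup_y\left|\psi(x,y)-\psi(x_0,y)\right|<2\epsilon$ for all $x$ in a suitable neighborhood of $x_0$, by covering the compact slice $\{x_0\}\times\pi_{\mathcal{Y}}(supp(\psi))$ with finitely many product neighborhoods (a tube-lemma style argument), and then splits $\left|F(x,z)-F(x_0,z_0)\right|$ by the triangle inequality, handling the first term with this estimate and the second by applying the CSM property to the single function $\psi(x_0,\cdot)\in C_c(\mathcal{Y})$. You instead reduce to pure tensors $f\otimes g$, where continuity of $(x,z)\mapsto f(x)\int g\,d\gamma^z$ is immediate from the CSM definition, and reach a general $\psi$ by Stone--Weierstrass on a compact rectangle; your insertion of the cut-offs $h_{\mathcal{X}},h_{\mathcal{Y}}$ is exactly the care the argument needs, since Stone--Weierstrass alone gives no control of supports, and without $y$-supports confined to the fixed compact set $\overline{V_{\mathcal{Y}}}$ the estimate $\left|F_{\Psi_n}-F_\psi\right|\le\varepsilon_n\,\gamma^z(\overline{V_{\mathcal{Y}}})$ would be worthless (indeed $\gamma^z(\mathcal{Y})$ may be infinite). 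Notably, both proofs invoke Lemma \ref{lem:CSM always locally bounded} at precisely the same juncture, to convert a bound uniform in $y$ into a bound on integrals uniform in $z$, and your compact-support argument is essentially identical to the paper's (and to Lemma \ref{lem:CSM_support}). What each approach buys: the paper's is elementary and self-contained, using nothing beyond compactness; yours is more modular and conceptually transparent --- it exhibits the lemma as the statement that the CSM property, which is literally the pure-tensor case, survives a density argument --- at the price of invoking Stone--Weierstrass and the routine verification that the cut-off tensor approximants lie in $C_c(\mathcal{X})\otimes C_c(\mathcal{Y})$.
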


\begin{proof}
We first show that $F(x,z)= \int_{\mathcal{Y}} \psi(x,y)d\gamma^z(y)$ has compact support. Let $\pi_{\mathcal{X}}: \mathcal{X}\times \mathcal{Y} \rightarrow \mathcal{X}$ and $\pi_{\mathcal{Y}}: \mathcal{X}\times \mathcal{Y} \rightarrow \mathcal{Y}$ denote the projections, and let $K \subseteq \mathcal{X}\times \mathcal{Y}$ be the support of $\psi$. Observe that if $(x,z)\notin \pi_{\mathcal{X}}(K)\times \phi(\pi_{\mathcal{Y}}(K))$, then $(x,y)$ does not belong to $K$ for any $y\in \phi^{-1}(z)$.
Therefore, for such $(x,z)$ we have $F(x,z) = \int_{\mathcal{Y}} \psi(x,y)d\gamma^z(y)=\int_{\phi^{-1}(z)} \psi(x,y)d\gamma^z(y)=0$. Thus $\{(x,z) ~|~ F(x,z) \neq 0\}$ is contained in $\pi_{\mathcal{X}}(K)\times \phi(\pi_{\mathcal{Y}}(K))$ which is compact (hence closed), and it follows that $supp(F) \subseteq \pi_{\mathcal{X}}(K)\times \phi(\pi_{\mathcal{Y}}(K))$ is compact.

We turn to proving that $F$ is continuous on $\mathcal{X}\times \mathcal{Z}$. Fix $x_0\in \mathcal{X}$, $z_0\in \mathcal{Z}$ and $\epsilon>0$. We claim that there exists a neighborhood $A_{x_0}$ of $x_0$ such that $\sup_{y}\left|\psi(x,y)-\psi(x_0,y)\right|<2\epsilon$ for any $x \in A_{x_0}$.

Let $ y' \in \pi_{\mathcal{Y}}(K)$. Since $\psi$ is continuous, there exist open sets $A_{x_0,y'}\subset \mathcal{X}$ and $B_{x_0,y'}\subset\mathcal{Y}$ such that $(x_0, y')\in A_{x_0,y'}\times B_{x_0,y'}$, and $\left|\psi(x,y)-\psi(x_0,y')\right|<\epsilon$ for any $(x, y)\in A_{x_0,y'}\times B_{x_0,y'}$. In particular, $\left|\psi(x,y)-\psi(x_0,y)\right|\leq \left|\psi(x,y)-\psi(x_0,y')\right|+\left|\psi(x_0,y')-\psi(x_0,y)\right|<2 \epsilon$. Since $\{x_0\}\times \pi_{\mathcal{Y}}(K)$ is compact, it admits a finite cover $\bigcup_{i=1}^n \left(A_{x_0,y_i'}\times B_{x_0,y_i'} \right)$. Define $A_{x_0}=\bigcap_{i=1}^n A_{x_0,y_i'}$ and $B_{x_0}=\bigcup_{i=1}^n B_{x_0,y_i'}$. Now consider $(x,y)\in A_{x_0}\times \mathcal{Y}$. If $y\in B_{x_0}$, then $\left|\psi(x,y)-\psi(x_0,y)\right|<2\epsilon$. If $y\notin B_{x_0}$ then $(x,y), (x_0, y)\notin K$, hence $\left|\psi(x,y)-\psi(x_0,y)\right|=|0-0|=0$. Thus, for any $x\in A_{x_0}$, we have
$\sup_{y}\left|\psi(x,y)-\psi(x_0,y)\right|<2\epsilon$, as claimed.

For every $x\in A_{x_0}$ and $z\in\mathcal{Z}$,
\begin{eqnarray*}
\left| F(x,z) - F(x_0,z) \right| &=& \left|\int_{\mathcal{Y}} \psi(x,y)d\gamma^z(y)-\int_{\mathcal{Y}} \psi(x_0,y)d\gamma^z(y)\right| \\ &=& \left|\int_{\mathcal{Y}} \left( \psi(x,y)-\psi(x_0,y) \right) d\gamma^z(y)\right| \\ &\leq&\int_{\mathcal{Y}} \left| \psi(x,y)-\psi(x_0,y)\right|d\gamma^z(y) \ < \ 2 \; \epsilon \; \gamma^z(\pi_{\mathcal{Y}}(K))
\end{eqnarray*}
Since $\gamma^{\bullet}$ is a CSM, by Lemma \ref{lem:CSM always locally bounded} it is locally bounded, or equivalently - bounded on compact sets. It follows that for every $x\in A_{x_0}$ and $z\in\mathcal{Z}$,
$$\left| F(x,z) - F(x_0,z) \right| \ < \ \epsilon \cdot C, $$
where $C$ is a constant depending only on $K$ and $\gamma^{\bullet}$.

On the other hand, by the definition of a CSM, there is a neighborhood $V_{z_0}$ of $z_0$ such that for any $z\in V_{z_0}$
$$\left| F(x_0,z) - F(x_0,z_0) \right| \ < \ \epsilon.$$

We conclude that for every $(x,z)\in A_{x_0}\times V_{z_0}$,
$$
\left| F(x,z) - F(x_0,z_0) \right| \leq \left| F(x,z) - F(x_0,z) \right| + \left| F(x_0,z) - F(x_0,z_0) \right| \ < \ \epsilon \ (C+1),
$$
hence $F$ is continuous.
\end{proof}

\begin{prop}\label{prop:lifting CSM}
If $\alpha^{\bullet}$ is a CSM, then so is the lifting $(q^*\alpha)^{\bullet}$.
\end{prop}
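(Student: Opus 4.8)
The plan is to reduce the required continuity of $y \mapsto \int_{X*Y} f\, d(q^*\alpha)^y$ to the continuity result already established in Lemma \ref{lem:continuity lemma}. The first step is to unwind Definition \ref{def:lifting BSM} and record the identity
$$\int_{X*Y} f\, d(q^*\alpha)^y = \int_X f(x,y)\, d\alpha^{q(y)}(x) \qquad (f \in C_c(X*Y)),$$
where the right-hand integral is effectively taken over the fiber $p^{-1}(q(y))$ on which $\alpha^{q(y)}$ is concentrated. This follows by the standard simple-function and Monotone Convergence argument: since $(q^*\alpha)^y$ is $\alpha^{q(y)}\times\delta_y$ restricted to $\pi_Y^{-1}(y) = p^{-1}(q(y))\times\{y\}$, integration against the factor $\delta_y$ collapses the $Y$-coordinate to the value $y$.

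The second step addresses the mismatch between $C_c(X*Y)$ and the hypothesis of Lemma \ref{lem:continuity lemma}, which demands a function on a genuine product. I would extend $f$ to a function $\tilde f \in C_c(X \times Y)$ with $\tilde f|_{X*Y} = f$. This is possible because $X*Y$ is closed in $X \times Y$ (it is the preimage of the diagonal of $Z \times Z$ under the continuous map $(x,y)\mapsto (p(x),q(y))$, and $Z$ is Hausdorff), while $X \times Y$ is a second countable locally compact Hausdorff, hence normal, space: one applies the Tietze extension theorem to obtain a continuous extension and then multiplies by a cutoff $\varphi \in C_c(X\times Y)$ with $\varphi \equiv 1$ on $supp(f)$ to secure compact support. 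Since $\tilde f = f$ on all of $X*Y$, and $\alpha^{q(y)}$ is concentrated on $p^{-1}(q(y)) \subseteq \{x : (x,y)\in X*Y\}$, we may replace $f$ by $\tilde f$ in the identity of Step 1 without changing the integral:
$$\int_{X*Y} f\, d(q^*\alpha)^y = \int_X \tilde f(x,y)\, d\alpha^{q(y)}(x).$$

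The third step invokes Lemma \ref{lem:continuity lemma} with $\mathcal{X} = Y$, $\mathcal{Y} = X$, $\mathcal{Z} = Z$, $\phi = p$ and $\gamma^{\bullet} = \alpha^{\bullet}$, applied to $\psi(y,x) := \tilde f(x,y)$, which lies in $C_c(Y \times X)$ since coordinate swapping is a homeomorphism. The lemma yields that
$$G(y,z) := \int_X \tilde f(x,y)\, d\alpha^z(x)$$
belongs to $C_c(Y \times Z)$, and in particular is continuous. Finally, because $q$ is continuous the map $y \mapsto (y,q(y))$ is continuous, and $\int_{X*Y} f\, d(q^*\alpha)^y = G(y,q(y))$ is therefore a composition of continuous maps, hence continuous. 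As this holds for every $f \in C_c(X*Y)$, the lifting $(q^*\alpha)^{\bullet}$ is a CSM.

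I expect the main obstacle to be the extension step: one must produce $\tilde f$ that is simultaneously compactly supported and equal to $f$ on the \emph{entire} pullback $X*Y$ (not merely on $supp(f)$), and one must check that replacing $f$ by $\tilde f$ leaves the integral unchanged, which rests precisely on the concentration of $\alpha^{q(y)}$ on $p^{-1}(q(y))$. The remaining bookkeeping — the coordinate swap matching the argument order in Lemma \ref{lem:continuity lemma} and the collapse of the $\delta_y$ factor in Step 1 — is routine.
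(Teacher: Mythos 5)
Your proposal is correct and follows essentially the same route as the paper's proof: Tietze extension of $f$ to some $\tilde f \in C_c(X\times Y)$ (using that $X*Y$ is closed in $X\times Y$), an application of (a coordinate-swapped version of) Lemma \ref{lem:continuity lemma}, composition with the continuous map $y \mapsto (y,q(y))$, and the concentration of $\alpha^{q(y)}$ on $p^{-1}(q(y))$ to identify the resulting integral with $\int_{X*Y} f\, d(q^*\alpha)^y$. The only difference is cosmetic: you record the $\delta_y$-collapse identity first and substitute $\tilde f$ afterward, whereas the paper extends first and verifies the identity at the end.
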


\begin{proof}
Let $f \in  C_c(X * Y)$. We need to show that the function $y \mapsto \int_{X * Y} f(x,\eta) d(q^*\alpha)^y(x,\eta)$ is continuous on $Y$. The space $X * Y$ is closed in $X \times Y$, as the inverse image of the diagonal $\Delta(Z)$ under the continuous map $(p,q)$. Therefore, by Tietze's Extension Theorem, there exists a function $F \in C(X \times Y)$ such that $F|_{X * Y} = f$. Since we can multiply $F$ by a function $\varphi \in C_c(X \times Y)$ which satisfies $\varphi =1$ on $K = supp(f)$, we can assume, without loss of generality, that $F \in C_c(X \times Y)$.

We now apply (a symmetric version of) lemma \ref{lem:continuity lemma} above, and obtain that the map $(y,z) \mapsto \int_{X \times Y} F(x,y)d\alpha^z(x)$ belongs to $C_c(Y \times Z)$. Composing with the continuous function $y \mapsto (y,q(y))$, we deduce that the map $y \mapsto \int_{X \times Y} F(x,y)d\alpha^{q(y)}(x)$ is continuous on $Y$.

Observe that $\alpha^{q(y)}$ is concentrated on $p^{-1}(q(y))$. Therefore, since $ p^{-1}(q(y))\times\{y\} \subset X*Y$, we have
\begin{eqnarray*}
\int_{X \times Y} F(x,y)d\alpha^{q(y)}(x) &=& \int_{p^{-1}(q(y))\times\{y\}} F(x,y)d\alpha^{q(y)}(x) \ =  \ \int_{X*Y} f(x,y)d\alpha^{q(y)}(x) \\ &=& \int_{X*Y} f(x,\eta) d(\alpha^{q(y)} \times \delta_y)(x,\eta) \ = \  \int_{X*Y} f(x,\eta) d(q^*\alpha)^y(x,\eta).
\end{eqnarray*}
We conclude that the map $y \mapsto \int_{X*Y} f(\xi,\eta) d(q^*\alpha)^y(\xi,\eta)$ is continuous on $Y$, as required.
\end{proof}

\begin{prop}\label{prop:lifting stability}
The properties of being positive on open sets and locally bounded are preserved under lifting.
\end{prop}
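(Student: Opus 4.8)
The plan is to treat the two properties separately; in both cases the strategy is to reduce an arbitrary open subset of $X*Y$ to an \emph{elementary} open set, using the elementary-open-set basis from Remark \ref{rem:elementary open sets}, and then invoke calculation (\ref{calc:lifting}), which computes the lifted measure of an elementary open set as a product $\alpha^{q(y)}(A)\cdot\delta_y(B)$.

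For positivity on open sets, I would fix $y\in Y$ and an open set $E\subseteq X*Y$ with $E\cap\pi_Y^{-1}(y)\neq\emptyset$, and choose a point $(x_0,y)\in E\cap\pi_Y^{-1}(y)$. Since $\pi_Y^{-1}(y)=p^{-1}(q(y))\times\{y\}$, this forces $p(x_0)=q(y)$, i.e. $x_0\in p^{-1}(q(y))$. Because the elementary open sets form a basis, I can pick an elementary open set $(A\times B)\cap(X*Y)$ with $(x_0,y)\in(A\times B)\cap(X*Y)\subseteq E$. Monotonicity together with calculation (\ref{calc:lifting}) then gives $(q^*\alpha)^y(E)\geq\alpha^{q(y)}(A)\cdot\delta_y(B)$. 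Now $y\in B$ yields $\delta_y(B)=1$, while $x_0\in A\cap p^{-1}(q(y))$ shows $A\cap p^{-1}(q(y))\neq\emptyset$, so positivity of $\alpha^{\bullet}$ on $p$ gives $\alpha^{q(y)}(A)>0$. Hence $(q^*\alpha)^y(E)>0$, as needed.

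For local boundedness, the argument is essentially a rerun of the local-finiteness computation inside the proof of Proposition \ref{prop:lifting BSM}, now carrying a uniform constant. Given $(x,y)\in X*Y$, local boundedness of $\alpha^{\bullet}$ supplies a neighborhood $U_x$ of $x$ in $X$ and a constant $C>0$ with $\alpha^z(U_x)<C$ for all $z\in Z$. I would lift this to $U:=(U_x\times Y)\cap(X*Y)$, a neighborhood of $(x,y)$ in $X*Y$, and compute via (\ref{calc:lifting}) that $(q^*\alpha)^{y'}(U)=\alpha^{q(y')}(U_x)\cdot\delta_{y'}(Y)=\alpha^{q(y')}(U_x)<C$ for every $y'\in Y$. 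This is exactly the neighborhood already used to establish local finiteness, and the uniform bound transfers verbatim.

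I do not expect a genuine obstacle here. The one subtlety worth flagging is that a general open subset of $X*Y$ need not be a product set, so in the positivity part one cannot apply (\ref{calc:lifting}) directly to $E$; the descent to an elementary open subset contained in $E$ (guaranteed by Remark \ref{rem:elementary open sets}) is the step that makes the computation available. One must also keep careful track of the fiber condition $A\cap p^{-1}(q(y))\neq\emptyset$, since that is precisely what licenses the use of positivity of $\alpha^{\bullet}$. The locally bounded statement, by contrast, requires no new idea beyond reusing the earlier neighborhood with its uniform bound.
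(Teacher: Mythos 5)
Your proof is correct and follows essentially the same route as the paper: reduce to elementary open sets and apply calculation (\ref{calc:lifting}) for positivity, and rerun the local-finiteness argument of Proposition \ref{prop:lifting BSM} with the uniform constant for local boundedness. The only difference is that you spell out the monotonicity step (passing from an arbitrary open set to a basis element inside it that still meets the fiber), which the paper compresses into the phrase ``it suffices to consider only elementary open sets''; this is a welcome clarification, not a departure.
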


\begin{proof}
Assume that $\alpha^{\bullet}$ is positive on open sets. In order to prove that $(q^*\alpha)^{\bullet}$ is positive on open sets, it suffices to consider only elementary open sets, since they generate the topology of $X*Y$. So fix $y \in Y$ and let $E=(A\times B)\cap (X*Y)$ be an elementary open set such that $E\cap\pi_Y^{-1}(y)\neq \emptyset$. This implies that $A\cap p^{-1}(q(y))\neq \emptyset$ and $y\in B$, hence $\alpha^{q(y)}(A)>0$ and $\delta_y(B)=1$. Using calculation (\ref{calc:lifting}) above we obtain that $(q^*\alpha)^{y}(E)=\alpha^{q(y)}(A)\cdot \delta_y(B)>0$. This proves that $(q^*\alpha)^{\bullet}$ is positive on open sets.

Proving that the lifted system is locally bounded is similar to the proof that it is locally finite in Proposition \ref{prop:lifting BSM}.
\end{proof}

Consider the pull-back diagram
$$\xymatrix{X * Y \ar [dd]_{\pi_X}^{(p^*\beta)^{\bullet}}\ar [rr]^{\pi_Y}_{(q^*\alpha)^{\bullet}}&&Y\ar [dd]_{q}^{\beta^{\bullet}}\\\\ X\ar [rr]^{p}_{\alpha^{\bullet}}&&Z}$$
where $\beta^{\bullet}$ and  $\alpha^{\bullet}$ are locally finite BSMs, and $(p^*\beta)^{\bullet}$ and $(q^*\alpha)^{\bullet}$ are their lifting to $\pi_X$ and $\pi_Y$, respectively. By Proposition \ref{prop:lifting BSM}, $(p^*\beta)^{\bullet}$ and $(q^*\alpha)^{\bullet}$ are also locally finite BSMs.

\begin{prop}
The above pull-back diagram is a commutative diagram of locally finite BSMs. In other words, $(\beta\circ q^*\alpha)^{\bullet}$ and $(\alpha\circ p^*\beta)^{\bullet}$ are locally finite and $$(\beta\circ q^*\alpha)^{\bullet}=(\alpha\circ p^*\beta)^{\bullet}.$$
\end{prop}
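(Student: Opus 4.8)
The plan is to reduce the equality of systems to a computation on a countable basis. First I would note that $q\circ\pi_Y=p\circ\pi_X$ as maps $X*Y\to Z$, since $(x,y)\in X*Y$ forces $p(x)=q(y)$; thus both $(\beta\circ q^*\alpha)^{\bullet}$ and $(\alpha\circ p^*\beta)^{\bullet}$ are systems of measures on one and the same map, which is what makes the comparison meaningful. By Proposition \ref{prop:lifting BSM} the liftings $(q^*\alpha)^{\bullet}$ and $(p^*\beta)^{\bullet}$ are (locally finite) BSMs, so the composition integrals of Definition \ref{def:composition BSM} are well defined and, by Proposition \ref{prop:composition BSM}, each composition is again a BSM. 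It therefore remains to establish local finiteness and the pointwise equality of the two families of measures.

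The core is a direct computation on the elementary open sets $E=(A\times B)\cap(X*Y)$ of Remark \ref{rem:elementary open sets}. For the upper-right path, calculation (\ref{calc:lifting}) gives $(q^*\alpha)^{y}(E)=\alpha^{q(y)}(A)\,\delta_y(B)=\alpha^{q(y)}(A)\,\chi_{_B}(y)$, whence
\[
(\beta\circ q^*\alpha)^{z}(E)=\int_Y \alpha^{q(y)}(A)\,\chi_{_B}(y)\,d\beta^z(y)=\int_{B}\alpha^{q(y)}(A)\,d\beta^z(y).
\]
The key observation is that $\beta^z$ is concentrated on $q^{-1}(z)$, so on the support of $\beta^z$ we may replace $q(y)$ by $z$; the integrand collapses to the constant $\alpha^z(A)$ and the integral becomes $\alpha^z(A)\,\beta^z(B)$. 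Running the symmetric version of (\ref{calc:lifting}) for $(p^*\beta)^{x}=\delta_x\times\beta^{p(x)}$ along the lower-left path, and using that $\alpha^z$ is concentrated on $p^{-1}(z)$ to replace $p(x)$ by $z$, yields $(\alpha\circ p^*\beta)^{z}(E)=\beta^z(B)\,\alpha^z(A)$. Thus both compositions take the common value $\alpha^z(A)\,\beta^z(B)$ on every elementary open set.

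With this computation in hand both remaining points are immediate. For local finiteness, given $(x_0,y_0)\in X*Y$ I would invoke local finiteness of $\alpha^{\bullet}$ and $\beta^{\bullet}$ to choose open neighborhoods $A\ni x_0$ and $B\ni y_0$ with $\alpha^z(A)<\infty$ and $\beta^z(B)<\infty$ for all $z\in Z$; then the elementary open neighborhood $E=(A\times B)\cap(X*Y)$ of $(x_0,y_0)$ satisfies $(\beta\circ q^*\alpha)^z(E)=\alpha^z(A)\,\beta^z(B)<\infty$ for every $z$, and symmetrically for the other composition. For the equality, I would fix $z$ and apply Lemma \ref{lem:mu=nu for every E Borel} to the two locally finite measures $(\beta\circ q^*\alpha)^z$ and $(\alpha\circ p^*\beta)^z$, taking as countable basis the elementary open sets of Remark \ref{rem:elementary open sets}: since a finite intersection of elementary open sets is again elementary open, the computation above shows the two measures agree on all finite intersections of basis elements, hence on every Borel subset of $X*Y$. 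As this holds for every $z\in Z$, the two systems coincide. The only point requiring care — and the crux of the argument — is the replacement of $q(y)$ by $z$ (and $p(x)$ by $z$) inside the integrals, which is exactly where the pullback constraint $p(x)=q(y)$ and the fiberwise concentration of the measures conspire to make the diagram commute.
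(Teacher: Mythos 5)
Your proposal is correct and follows essentially the same route as the paper's proof: compute both compositions on elementary open sets $(A\times B)\cap(X*Y)$ via calculation (\ref{calc:lifting}), use the concentration of $\beta^z$ on $q^{-1}(z)$ (resp.\ of $\alpha^z$ on $p^{-1}(z)$) to collapse the integrand and obtain the common value $\alpha^z(A)\beta^z(B)$, read off local finiteness from that same formula, and conclude equality on all Borel sets by applying Lemma \ref{lem:mu=nu for every E Borel} to the countable basis of elementary open sets, whose finite intersections are again elementary. Your explicit remarks that $q\circ\pi_Y=p\circ\pi_X$ (so the two systems live on the same map) and that Propositions \ref{prop:lifting BSM} and \ref{prop:composition BSM} make the composition integrals well defined are points the paper leaves implicit, but they do not change the argument.
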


\begin{proof}
Fix $z\in Z$ and denote $\mu = (\beta\circ q^*\alpha)^{z}$ and $\nu = (\alpha\circ q^*\beta)^{z}$.
We claim that $\mu(E) = \nu(E)$ for any elementary open subset of $X*Y$. Indeed, let $E=(A\times B)\cap (X*Y)$. Then by calculation (\ref{calc:lifting}) preceding Proposition \ref{prop:lifting BSM}, and recalling that $\beta^z$ is concentrated on $q^{-1}(z)$, we have:
\begin{eqnarray}
\mu(E)=(\beta\circ q^*\alpha)^{z}(E)&=&\int_{Y}(q^*\alpha)^y(E)d\beta^z(y) \ = \ \int_{Y}\alpha^{q(y)}(A)\cdot\delta_y(B)d\beta^z(y) \ = \nonumber \\
&=&\int_{B\cap q^{-1}(z)}\alpha^{q(y)}(A)d\beta^z(y) \ = \ \int_{B}\alpha^{z}(A)d\beta^z(y) \ = \ \alpha^{z}(A)\beta^z(B). \nonumber
\end{eqnarray}
Analogously,
\begin{eqnarray}
\nu(E)=(\alpha\circ p^*\beta)^{z}(E) & = & \int_{X}(p^*\beta)^x(E)d\alpha^z(x) \
= \ \int_{X}\delta_x(A)\cdot\beta^{p(x)}(B)d\alpha^z(x) \ = \nonumber \\
&=& \int_{A \cap p^{-1}(z)}\beta^{p(x)}(B)d\alpha^z(x) \ = \ \int_{A}\beta^{z}(B)d\alpha^z(x) \ = \ \alpha^{z}(A)\beta^z(B). \nonumber
\end{eqnarray}
Therefore $\mu(E) = \nu(E)$ for any elementary open set.

The systems $\alpha^{\bullet}$ and $\beta^{\bullet}$ are locally finite. Thus, the topology of $X*Y$ admits a basis $\mathcal{B}$ as in Remark \ref{rem:elementary open sets}, comprised of elementary open sets of the form $E=(A\times B)\cap (X*Y)$ satisfying that $\alpha^{z}(A)$ and $\beta^z(B)$ are both finite, for any $z \in Z$. It follows from the above calculations that the compositions $(\beta\circ q^*\alpha)^{\bullet}$ and $(\alpha\circ p^*\beta)^{\bullet}$ are locally finite systems, and moreover, $\mu$ and $\nu$ are locally finite measures. Since finite intersections of elementary open sets are themselves elementary open sets,
we can apply Lemma \ref{lem:mu=nu for every E Borel} with the basis ${\mathcal B}$, and conclude that $\mu(E) = \nu(E)$ for every Borel subset $E \subseteq X$. This completes the proof.
\end{proof}

\section{Fibred products of systems of measures}\label{sec:fibred products}

Fibred products are mentioned in \S 1.3.a in \cite{renault-anantharaman-delaroche}. Assume that we have two pullback diagrams:
$$\xymatrix{X_i * Y_i \ar [dd]_{\pi_{X_i}}\ar [rr]^{\pi_{Y_i}}&&Y_i\ar [dd]_{q_i}\\\\
X_i\ar [rr]^{p_i}&&Z}$$ Also, let $\xymatrix{X_1\ar [rr]^{f}_{\gamma_X^{\bullet}}&&X_2}$ and $\xymatrix{Y_1\ar [rr]^{g}_{\gamma_Y^{\bullet}}&&Y_2}$ be connecting maps endowed with locally finite systems of measures, satisfying that $p_1=p_2\circ f$ and $q_1=q_2\circ g$. Putting these together we obtain the following diagram:
\[
\xy 0;<.2cm,0cm>:
(20,20)*{X_2 * Y_2}="1";
(40,20)*{Y_2}="2";
(20,0)*{X_2}="3";
(40,0)*{Z}="4";
(10,10)*{X_1 * Y_1}="5";
(30,10)*{Y_1}="6";
(10,-10)*{X_1}="7";
(30,-10)*{Z}="8";
{"1"+CR+(.5,0);"2"+CL+(-.5,0)**@{-}?>*{\dir{>}}?>(.5)+(0,1)*{\scriptstyle \pi_{Y_2}}};
{"5"+CR+(.5,0);"6"+CL+(-.5,0)**@{-}?>*{\dir{>}}?>(.75)+(0,1)*{\scriptstyle \pi_{Y_1}}};
{"3"+CR+(.5,0);"4"+CL+(-10,0)**@{-}};
{"3"+CR+(9,0);"4"+CL+(-.5,0)**@{-}?>*{\dir{>}}?>(.5)+(0,1)*{\scriptstyle p_2}};
{"7"+CR+(.5,0);"8"+CL+(-.5,0)**@{-}?>*{\dir{>}}?>(.5)+(0,1)*{\scriptstyle p_1}};
{"1"+CD+(0,-.5);"3"+CU+(0,9.6)**@{-}} ;
{"1"+CD+(0,-9.8);"3"+CU+(0,.5)**@{-}?>*{\dir{>}}?>(.5)+(-1.5,0)*{\scriptstyle \pi_{X_2}}} ;
{"2"+CD+(0,-.5);"4"+CU+(0,.5)**@{-}?>*{\dir{>}}?>(.5)+(-1,0)*{\scriptstyle q_2}} ;
{"5"+CD+(0,-.5);"7"+CU+(0,.5)**@{-}?>*{\dir{>}}?>(.5)+(-1.5,0)*{\scriptstyle \pi_{X_1}}} ;
{"6"+CD+(0,-.5);"8"+CU+(0,.5)**@{-}?>*{\dir{>}}?>(.25)+(-1,0)*{\scriptstyle q_1}} ;
{"5"+C+(1.4,1.4);"1"+C+(-1.4,-1.4)**@{-}?>*{\dir{>}}?>*{\dir{>}}?>(.5)+(-2,0)*{\scriptstyle f*g}} ;
{"6"+C+(1.4,1.4);"2"+C+(-1.4,-1.4)**@{-}?>*{\dir{>}}?>(.5)+(-1,0)*{\scriptstyle g}?>(.5)+(1.5,0)*{\scriptstyle \gamma_Y^{\bullet}}} ;
{"7"+C+(1.4,1.4);"3"+C+(-1.4,-1.4)**@{-}?>*{\dir{>}}?>(.5)+(-1,0)*{\scriptstyle f}?>(.5)+(1.5,0)*{\scriptstyle \gamma_X^{\bullet}}} ;
{"8"+C+(1.4,1.4);"4"+C+(-1.4,-1.4)**@{-}?>*{\dir{>}}?>(.5)+(-1.5,0)*{\scriptstyle id}};
\endxy
\]
where the map $f*g=X_1*Y_1\rightarrow X_2*Y_2$ is defined by $(f*g)(x_1,y_1)=(f(x_1),g(y_1))$. This is a Borel map, as the restriction of the Borel function $f \times g$ to the Borel subspace $X_1*Y_1 \subseteq X_1 \times Y_1$. Moreover, the above diagram is commutative. Observe that the fibers of the map $f*g$ are Cartesian products of the form $(f*g)^{-1}(x_2,y_2)=f^{-1}(x_2)\times g^{-1}(y_2)$. \\

We will assume throughout this section that $\gamma_X^{\bullet}$ and $\gamma_Y^{\bullet}$ are \textit{locally finite} systems of measures.

\begin{mydef}\label{def:fibred product BSM}
The \textbf{fibred product} of the locally finite systems of measures $\gamma_X^{\bullet}$ and $\gamma_Y^{\bullet}$, denoted $(\gamma_X * \gamma_Y)^{\bullet}$, is defined by $$\left(\gamma_X * \gamma_Y\right)^{(x_2,y_2)}=\gamma_X^{x_2} \times \gamma_Y^{y_2}.$$ More precisely, $\left(\gamma_X * \gamma_Y\right)^{(x_2,y_2)}(E)=\left(\gamma_X^{x_2} \times \gamma_Y^{y_2}\right)(E\cap (f*g)^{-1}(x_2,y_2))$, for every $(x_2,y_2)\in X_2*Y_2$ and every Borel set $E\subseteq X_1*Y_1$.
\end{mydef}

\begin{prop}\label{prop:fibred product BSM}
The fibred product $(\gamma_X * \gamma_Y)^{\bullet}$ is a locally finite system of measures on $f * g$. If $\gamma_X^{\bullet}$ and $\gamma_Y^{\bullet}$ are both locally finite BSMs, then so is $(\gamma_X * \gamma_Y)^{\bullet}$.
\end{prop}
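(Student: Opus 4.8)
The plan is to mirror the proof of Proposition \ref{prop:lifting BSM}, replacing the Dirac factor appearing there by a genuine second system of measures. The backbone of the argument is a computation on elementary open sets analogous to calculation (\ref{calc:lifting}). Fix $(x_2,y_2) \in X_2 * Y_2$ and let $E = (A \times B) \cap (X_1 * Y_1)$ be an elementary open set, with $A \subseteq X_1$ and $B \subseteq Y_1$ open. Since the fiber $(f*g)^{-1}(x_2,y_2) = f^{-1}(x_2) \times g^{-1}(y_2)$ is contained in $X_1 * Y_1$ (this follows from $p_1 = p_2 \circ f$, $q_1 = q_2 \circ g$ and $p_2(x_2) = q_2(y_2)$), intersecting $E$ with this fiber yields a rectangle, and recalling that $\gamma_X^{x_2}$ and $\gamma_Y^{y_2}$ are concentrated on $f^{-1}(x_2)$ and $g^{-1}(y_2)$ respectively, we obtain $(\gamma_X * \gamma_Y)^{(x_2,y_2)}(E) = \gamma_X^{x_2}(A) \cdot \gamma_Y^{y_2}(B)$. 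This identity is the engine for both parts of the statement.

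For the first part, I would first note that each $(\gamma_X * \gamma_Y)^{(x_2,y_2)} = \gamma_X^{x_2} \times \gamma_Y^{y_2}$ is a well-defined Borel measure, being the product of two locally finite (hence $\sigma$-finite) Borel measures restricted to the Borel fiber, and that it is concentrated on $(f*g)^{-1}(x_2,y_2)$ by the very definition (Definition \ref{def:fibred product BSM}). To verify local finiteness, fix $(x_1,y_1) \in X_1 * Y_1$. Local finiteness of $\gamma_X^{\bullet}$ and $\gamma_Y^{\bullet}$ supplies open neighborhoods $U$ of $x_1$ and $V$ of $y_1$ with $\gamma_X^{x_2}(U) < \infty$ and $\gamma_Y^{y_2}(V) < \infty$ for all $x_2, y_2$. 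Applying the identity above to the elementary open neighborhood $E = (U \times V) \cap (X_1 * Y_1)$ of $(x_1,y_1)$ gives $(\gamma_X * \gamma_Y)^{(x_2,y_2)}(E) = \gamma_X^{x_2}(U) \cdot \gamma_Y^{y_2}(V) < \infty$, uniformly in $(x_2,y_2)$, so the fibred product is a locally finite system of measures.

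For the second part, assume $\gamma_X^{\bullet}$ and $\gamma_Y^{\bullet}$ are locally finite BSMs. Using the computation above, for an elementary open set $E = (A \times B) \cap (X_1 * Y_1)$ the function $(\gamma_X * \gamma_Y)^{\bullet}(E)$ factors as the product of $(x_2,y_2) \mapsto \gamma_X^{x_2}(A)$ and $(x_2,y_2) \mapsto \gamma_Y^{y_2}(B)$. Each of these is the composition of a Borel function ($\gamma_X^{\bullet}(A)$ on $X_2$, respectively $\gamma_Y^{\bullet}(B)$ on $Y_2$, Borel since the systems are BSMs) with the continuous coordinate projection out of $X_2 * Y_2$, hence Borel; so their product is Borel as well. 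Since finite intersections of elementary open sets are again elementary open sets and, as in Remark \ref{rem:elementary open sets}, these form a countable basis for $X_1 * Y_1$, the hypotheses of Lemma \ref{lem:criterion for locally finite BSM} are met, and together with the already-established local finiteness this lets us conclude that $(\gamma_X * \gamma_Y)^{\bullet}$ is a locally finite BSM.

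The step I expect to require the most care is not any single deduction but the elementary-open-set computation itself: one must correctly intersect with the fiber, observe that the fiber sits inside the pullback $X_1 * Y_1$, and use the concentration of each factor to discard the fiber constraints, arriving cleanly at the product $\gamma_X^{x_2}(A)\cdot\gamma_Y^{y_2}(B)$. Once this is in hand, both the local finiteness and the BSM conclusion follow by the same mechanism used for the lifting, so no genuinely new obstacle arises.
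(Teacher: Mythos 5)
Your proposal is correct and follows essentially the same route as the paper: the paper's proof likewise reduces everything to the product formula $(\gamma_X * \gamma_Y)^{(x_2,y_2)}(E)=\gamma_X^{x_2}(A)\cdot\gamma_Y^{y_2}(B)$ on elementary open sets, verifies concentration and local finiteness exactly as you do, and then invokes Lemma \ref{lem:criterion for locally finite BSM} via the countable basis of elementary open sets, mirroring Proposition \ref{prop:lifting BSM}. Your write-up even fills in the details (the fiber lying inside $X_1 * Y_1$, the factorization through the coordinate projections) that the paper leaves implicit by referring back to the lifting proof.
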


\begin{proof}
The proof is very similar to the proof of Proposition \ref{prop:lifting BSM}. As a product of locally finite Borel measures, $(\gamma_X * \gamma_Y)^{(x_2,y_2)}$ is a well defined Borel measure for every $(x_2,y_2)\in X_2*Y_2$. By definition it is concentrated on $f^{-1}(x_2)\times g^{-1}(y_2)=(f*g)^{-1}(x_2,y_2)$.
A calculation analogous to (\ref{calc:lifting}) in Remark \ref{rem:elementary open sets} gives
\begin{eqnarray}
(\gamma_X * \gamma_Y)^{(x,y)}(E)=\gamma_X^{x}(A)\cdot\gamma_Y^{y}(B)\label{calc:fibred product}
\end{eqnarray}
for any elementary open set of the form $E=(A\times B)\cap (X_1*Y_1)$.
Therefore, using the local finiteness of $\gamma_X^{\bullet}$ and $\gamma_Y^{\bullet}$, we can find for any $(x_1,y_1) \in X_1*Y_1$ a neighborhood $(U_{x_1} \times U_{y_1}) \cap (X_1*Y_1)$ satisfying $\left(\gamma_X * \gamma_Y\right)^{(x_2,y_2)}\left((U_{x_1} \times U_{y_1}) \cap (X_1*Y_1)\right) = \gamma_X^{x_2}(U_{x_1})\cdot\gamma_Y^{y_2}(U_{y_1}) <\infty$ for all $(x_2,y_2) \in X_2*Y_2$. Thus $(\gamma_X * \gamma_Y)^{\bullet}$ is a locally finite system of finite measures.

Now assume that $\gamma_X^{\bullet}$ and $\gamma_Y^{\bullet}$ are both locally finite BSMs. We need to prove that $(\gamma_X * \gamma_Y)^{\bullet}(E)$ is a Borel function for any Borel subset $E\subseteq X_1*Y_1$, but as in Proposition \ref{prop:lifting BSM} it is sufficient to prove it for any elementary open subset $E=(A\times B)\cap (X_1*Y_1)$.
The rest of the proof uses the same arguments as Proposition \ref{prop:lifting BSM}.
\end{proof}

In order to prove that a fibred product of CSMs is a CSM, we first need a lemma. We remind that in the CSM context, spaces are assumed to be Hausdorff and locally compact.

\begin{lem}\label{lem:fibred cartesian continuity}
Let $\psi \in C_c(X_1 \times Y_1)$. The function  $\displaystyle (\xi,\eta) \mapsto \int_{X_1 \times Y_1} \psi(x,y) d\gamma_X^{\xi} d\gamma_Y^{\eta}$ is in $C_c(X_2 \times Y_2)$.
\end{lem}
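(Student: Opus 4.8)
The plan is to reduce the statement to two successive applications of Lemma \ref{lem:continuity lemma}, integrating out the two fibre variables one at a time. The first thing I would record is the measure-theoretic preliminary that makes the iterated integral meaningful. Since $\gamma_X^{\bullet}$ and $\gamma_Y^{\bullet}$ are CSMs, they are locally bounded, hence bounded on compact sets, by Lemma \ref{lem:CSM always locally bounded}. As $\psi\in C_c(X_1\times Y_1)$ is bounded and its support is a compact set whose projections to $X_1$ and $Y_1$ are compact, the measures $\gamma_X^{\xi}$ and $\gamma_Y^{\eta}$ give finite mass to neighbourhoods of these projections; together with second countability (so that the measures are $\sigma$-finite), this legitimizes the Fubini--Tonelli theorem and lets me rewrite
$$\int_{X_1\times Y_1}\psi\,d\left(\gamma_X^{\xi}\times\gamma_Y^{\eta}\right)=\int_{X_1}\left(\int_{Y_1}\psi(x,y)\,d\gamma_Y^{\eta}(y)\right)d\gamma_X^{\xi}(x).$$

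Next I would integrate out the $Y_1$-variable. Applying Lemma \ref{lem:continuity lemma} with $\mathcal{X}=X_1$, $\mathcal{Y}=Y_1$, $\mathcal{Z}=Y_2$ and the CSM $\gamma_Y^{\bullet}$ on $g$, the partial integral
$$G(x,\eta)=\int_{Y_1}\psi(x,y)\,d\gamma_Y^{\eta}(y)$$
belongs to $C_c(X_1\times Y_2)$. I would then integrate out the $X_1$-variable by applying (the symmetric version of) Lemma \ref{lem:continuity lemma} to $G$, regarded as an element of $C_c(Y_2\times X_1)$ after swapping coordinates, with $\mathcal{X}=Y_2$, $\mathcal{Y}=X_1$, $\mathcal{Z}=X_2$ and the CSM $\gamma_X^{\bullet}$ on $f$. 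This yields that $(\eta,\xi)\mapsto\int_{X_1}G(x,\eta)\,d\gamma_X^{\xi}(x)$ lies in $C_c(Y_2\times X_2)$, and undoing the coordinate swap shows the function in the statement belongs to $C_c(X_2\times Y_2)$, as required. Note that the compact support is delivered for free, since Lemma \ref{lem:continuity lemma} outputs a $C_c$ function at each stage, so no separate support argument is needed.

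The genuinely routine parts are the bookkeeping of the coordinate swaps and checking that Lemma \ref{lem:continuity lemma} applies verbatim in its symmetric form (the paper already invokes such a symmetric version in the proof of Proposition \ref{prop:lifting CSM}). The main obstacle, modest as it is, is the justification of the Fubini--Tonelli step: one must confirm that the positivity and $\sigma$-finiteness of $\gamma_X^{\xi}$ and $\gamma_Y^{\eta}$, combined with the integrability coming from local boundedness and the compact support of $\psi$, make the iterated integral independent of order and identify the intermediate function $G$ with the genuine partial integral that Lemma \ref{lem:continuity lemma} requires. Once these preliminaries are in place, the conclusion follows immediately from the two invocations of Lemma \ref{lem:continuity lemma}.
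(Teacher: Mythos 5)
Your proof is correct and takes essentially the same route as the paper: two successive applications of Lemma \ref{lem:continuity lemma}, differing only in that you integrate out $Y_1$ first where the paper integrates out $X_1$ first, and in that you make the Fubini--Tonelli identification of the product-measure integral with the iterated integral explicit (the paper leaves this implicit). No gaps.
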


\begin{proof}
Define a function $F$ on $X_2 \times Y_1$ by $(\xi,y)\mapsto \int_{X_1} \psi(x,y)d\gamma_X^{\xi}(x)$. Using (a symmetric version of) Lemma \ref{lem:continuity lemma} with $\mathcal{X} = Y_1$, $\mathcal{Y} = X_1$ and $\mathcal{Z} = X_2$, we deduce that $F \in C_c(X_2 \times Y_1)$.

Now define a function $G$ on $X_2 \times Y_2$ by $(\xi,\eta) \mapsto \int_{Y_1} F(\xi,y)d\gamma_Y^{\eta}(y)$.
Again by (a symmetric version of) Lemma \ref{lem:continuity lemma} with $\mathcal{X} = X_2$, $\mathcal{Y} = Y_1$ and $\mathcal{Z} = Y_2$, we deduce that $G \in C_c(X_2 \times Y_2)$. This is what we had to prove.
\end{proof}

\begin{prop}\label{prop:fibred continuity}
If $\gamma_X^{\bullet}$ and $\gamma_Y^{\bullet}$ are both CSMs, then so is the fibred product $(\gamma_X * \gamma_Y)^{\bullet}$.
\end{prop}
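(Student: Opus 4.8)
The plan is to reduce the continuity of the fibred product system to Lemma \ref{lem:fibred cartesian continuity}, following the same strategy used in the proof of Proposition \ref{prop:lifting CSM}. Let $f \in C_c(X_1 * Y_1)$. We must show that the map $(x_2,y_2) \mapsto \int_{X_1 * Y_1} f\, d(\gamma_X * \gamma_Y)^{(x_2,y_2)}$ is continuous on $X_2 * Y_2$. First I would observe that $X_1 * Y_1$ is closed in $X_1 \times Y_1$ (as the preimage of the diagonal $\Delta(Z)$ under the continuous map $(p_1, q_1)$), so by Tietze's Extension Theorem $f$ extends to a function $F \in C(X_1 \times Y_1)$; multiplying by a suitable $\varphi \in C_c(X_1 \times Y_1)$ equal to $1$ on $supp(f)$, I may assume $F \in C_c(X_1 \times Y_1)$.

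Next I would apply Lemma \ref{lem:fibred cartesian continuity} to $F$, yielding that the function $(\xi,\eta) \mapsto \int_{X_1 \times Y_1} F(x,y)\, d\gamma_X^{\xi}(x)\, d\gamma_Y^{\eta}(y)$ belongs to $C_c(X_2 \times Y_2)$. The key computation is then to identify the restriction of this function to $X_2 * Y_2$ with the fibred product integral. For $(x_2, y_2) \in X_2 * Y_2$, the measure $\gamma_X^{x_2}$ is concentrated on $f^{-1}(x_2)$ and $\gamma_Y^{y_2}$ on $g^{-1}(y_2)$, so the product measure $\gamma_X^{x_2} \times \gamma_Y^{y_2}$ is concentrated on $f^{-1}(x_2) \times g^{-1}(y_2) = (f*g)^{-1}(x_2,y_2) \subseteq X_1 * Y_1$. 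Hence the integral over $X_1 \times Y_1$ collapses to an integral over $(f*g)^{-1}(x_2,y_2)$, where $F$ agrees with $f$. This gives
\begin{eqnarray*}
\int_{X_1 \times Y_1} F(x,y)\, d\gamma_X^{x_2}(x)\, d\gamma_Y^{y_2}(y) &=& \int_{(f*g)^{-1}(x_2,y_2)} f\, d(\gamma_X^{x_2} \times \gamma_Y^{y_2}) \\
&=& \int_{X_1*Y_1} f\, d(\gamma_X * \gamma_Y)^{(x_2,y_2)}.
\end{eqnarray*}

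Finally, since the inclusion $X_2 * Y_2 \hookrightarrow X_2 \times Y_2$ is continuous, restricting the $C_c(X_2 \times Y_2)$ function from Lemma \ref{lem:fibred cartesian continuity} to the closed subspace $X_2 * Y_2$ preserves continuity, and the identification above shows this restriction is exactly the map we need. Therefore $(\gamma_X * \gamma_Y)^{\bullet}$ is a CSM. The main obstacle is the bookkeeping in the key computation: one must carefully justify, using the concentration property of each factor measure and the Fubini/Tonelli theorem for the product measure, that the double integral of $F$ over the full product reduces to the integral of $f$ against the fibred product measure over the correct fiber. This is precisely where the locally finite (hence $\sigma$-finite) hypothesis on $\gamma_X^{\bullet}$ and $\gamma_Y^{\bullet}$ is used to legitimize the iterated integration.
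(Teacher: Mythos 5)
Your proposal is correct and follows essentially the same route as the paper's proof: Tietze extension (with a bump function) to get $F \in C_c(X_1 \times Y_1)$, an application of Lemma \ref{lem:fibred cartesian continuity}, and the concentration argument showing the restriction of the resulting function to the closed subspace $X_2 * Y_2$ equals the fibred product integral. The only cosmetic issue is that you denote the test function by $f$, which clashes with the connecting map $f:X_1 \rightarrow X_2$ appearing in the same computation (the paper uses $\psi$ to avoid exactly this).
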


\begin{proof}
The proof is similar to that of Proposition \ref{prop:lifting CSM}. Let $\psi \in  C_c(X_1 * Y_1)$. We need to show that the function $(\xi,\eta) \mapsto \int_{X_1 * Y_1} \psi(x,y) d(\gamma_X * \gamma_Y)^{(\xi,\eta)}(x,y)$ is continuous on$X_2 * Y_2$. As argued in the proof of Proposition \ref{prop:lifting CSM}, by Tietze's Extension Theorem, there exists a function $F \in C_c(X_1 \times Y_1)$ such that $F|_{X_1 * Y_1} = \psi$.

By Lemma \ref{lem:fibred cartesian continuity}, the map $G:(\xi,\eta) \mapsto \int_{X_1 \times Y_1} F(x,y) d\gamma_X^{\xi}(x) d\gamma_Y^{\eta}(y)$ is in $C_c(X_2 \times Y_2)$. In fact, $G|_{X_2 * Y_2} \in C_c(X_2 * Y_2)$, since $X_2 * Y_2$ is closed in $X_2 \times Y_2$.

Note that the measure $\gamma_X^{\xi}$ is concentrated on $f^{-1}(\xi)$ and the measure $\gamma_Y^{\eta}$ is concentrated on $g^{-1}(\eta)$. Hence their product is concentrated on the set of $(x,y)$ satisfying $f(x) = \xi, g(y) = \eta$. For $(\xi,\eta) \in X_2 * Y_2$ we have $p_2(\xi) = q_2(\eta)$, so $p_2(f(x)) = q_2(g(y))$. Recalling that $p_1=p_2\circ f$ and $q_1=q_2\circ g$, we get $p_1(x) = p_2(f(x)) = q_2(g(y)) = q_1(y)$, i.e. $(x,y) \in X_1 * Y_1$.
We conclude that the continuous map $G|_{X_2 * Y_2}$ satisfies
\begin{eqnarray*}
G|_{X_2 * Y_2}(\xi,\eta) &=& \int_{X_1 \times Y_1} F(x,y) d\gamma_X^{\xi} (x) d\gamma_Y^{\eta} (y) \ = \ \int_{X_1 * Y_1} F(x,y) d\gamma_X^{\xi} (x)d\gamma_Y^{\eta}(y) \\ &=& \int_{X_1 * Y_1} \psi(x,y) d\gamma_X^{\xi}(x) d\gamma_Y^{\eta}(y) \ = \ \int_{X_1 * Y_1} \psi(x,y) d(\gamma_X * \gamma_Y)^{(\xi,\eta)}(x,y).
\end{eqnarray*}
This completes the proof.
\end{proof}

\begin{prop}\label{prop:fibred stability}
The properties of being positive on open sets and locally bounded are preserved under fibred products.
\end{prop}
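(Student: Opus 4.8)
The plan is to mirror the proof of Proposition \ref{prop:lifting stability}, exploiting the product formula (\ref{calc:fibred product}), which evaluates the fibred product on an elementary open set $E=(A\times B)\cap(X_1*Y_1)$ as $(\gamma_X*\gamma_Y)^{(x_2,y_2)}(E)=\gamma_X^{x_2}(A)\cdot\gamma_Y^{y_2}(B)$. Both assertions reduce to this one computation, so no genuinely new idea is required beyond keeping track of the fibred-product fibers $(f*g)^{-1}(x_2,y_2)=f^{-1}(x_2)\times g^{-1}(y_2)$ and of the fact that elementary open sets (analogous to those in Remark \ref{rem:elementary open sets}) form a basis for the topology of $X_1*Y_1$.

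For positivity on open sets, I would first reduce to elementary open sets. Given $(x_2,y_2)\in X_2*Y_2$ and an arbitrary open $E\subseteq X_1*Y_1$ with $E\cap(f*g)^{-1}(x_2,y_2)\neq\emptyset$, I pick a point of this intersection and a basic (elementary open) neighborhood $E_0\subseteq E$ of it; then $E_0$ still meets the fiber, and monotonicity gives $(\gamma_X*\gamma_Y)^{(x_2,y_2)}(E)\geq(\gamma_X*\gamma_Y)^{(x_2,y_2)}(E_0)$, so it suffices to treat $E=(A\times B)\cap(X_1*Y_1)$. The condition $E\cap(f*g)^{-1}(x_2,y_2)\neq\emptyset$ then unpacks exactly to $A\cap f^{-1}(x_2)\neq\emptyset$ and $B\cap g^{-1}(y_2)\neq\emptyset$. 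Positivity on open sets of $\gamma_X^{\bullet}$ and $\gamma_Y^{\bullet}$ yields $\gamma_X^{x_2}(A)>0$ and $\gamma_Y^{y_2}(B)>0$, whence (\ref{calc:fibred product}) gives $(\gamma_X*\gamma_Y)^{(x_2,y_2)}(E)>0$.

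For local boundedness, I would take $(x_1,y_1)\in X_1*Y_1$ and invoke local boundedness of the two factor systems to obtain neighborhoods $U_{x_1}\ni x_1$, $U_{y_1}\ni y_1$ and constants $C_1,C_2>0$ with $\gamma_X^{x_2}(U_{x_1})<C_1$ for all $x_2\in X_2$ and $\gamma_Y^{y_2}(U_{y_1})<C_2$ for all $y_2\in Y_2$. Then $(U_{x_1}\times U_{y_1})\cap(X_1*Y_1)$ is a neighborhood of $(x_1,y_1)$, and by (\ref{calc:fibred product}) it satisfies the uniform bound $(\gamma_X*\gamma_Y)^{(x_2,y_2)}\big((U_{x_1}\times U_{y_1})\cap(X_1*Y_1)\big)=\gamma_X^{x_2}(U_{x_1})\cdot\gamma_Y^{y_2}(U_{y_1})<C_1C_2$ for every $(x_2,y_2)\in X_2*Y_2$, exactly as in the local-finiteness part of Proposition \ref{prop:fibred product BSM}.

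The only mild subtlety---hardly an obstacle---is the reduction to elementary open sets in the positivity argument: one must check that a basic set sitting inside $E$ can be chosen to still meet the fiber, which is immediate from choosing a basic neighborhood of a fiber point of $E$. Everything else is the same bookkeeping already used for the lifting, so I expect the proof to be short and routine.
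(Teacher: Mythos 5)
Your proposal is correct and follows essentially the same route as the paper: reduce positivity on open sets to elementary open sets and apply the product formula (\ref{calc:fibred product}), and for local boundedness repeat the local-finiteness computation from Proposition \ref{prop:fibred product BSM} with uniform constants $C_1 C_2$. Your explicit justification of the reduction step (choosing a basic elementary open neighborhood of a fiber point inside $E$ and using monotonicity) is a detail the paper leaves implicit, but it is the intended argument.
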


\begin{proof}
The proof is very similar to its counterpart for lifting in Proposition \ref{prop:lifting stability}. Assume that $\gamma_X^{\bullet}$ and $\gamma_Y^{\bullet}$ are positive on open sets. To prove that $(\gamma_X*\gamma_Y)^{\bullet}$ is positive on open sets is suffices to consider elementary open sets. Fix $(x,y)\in X_2*Y_2$ and let $E=(A\times B)\cap (X_1*Y_1)$ be an elementary open set such that $E\cap(f*g)^{-1}(x,y)\neq \emptyset$. This implies that $A\cap f^{-1}(x)\neq \emptyset$ and $B\cap g^{-1}(y)\neq \emptyset$, hence $\gamma_X^x(A)>0$ and $\gamma_Y^y(B)>0$. Using calculation (\ref{calc:fibred product}) from Proposition \ref{prop:fibred product BSM} we obtain $(\gamma_X * \gamma_Y)^{(x,y)}(E)=\gamma_X^x(A)\cdot\gamma_Y^y(B)>0$.

Proving that the lifted system is locally bounded is similar to the proof that it is locally finite in Proposition \ref{prop:composition BSM}.
\end{proof}

Assume that we now have for $i=$1,2,3 the following three pull-back diagrams
$$\xy\xymatrix{X_i*Y_i\ar[r]\ar[d]&X_i\ar[d]_{p_i}\\
               Y_i\ar[r]^{q_i}&Z}\endxy$$
where the maps $p_i$ and $q_i$ are all continuous. Furthermore, assume that we have continuous connecting maps $\xymatrix{X_1\ar [rr]^{f_1}_{\gamma_1^{\bullet}}&&X_2}$, $\xymatrix{Y_1\ar [rr]^{g_1}_{\xi_1^{\bullet}}&&Y_2}$,  $\xymatrix{X_2\ar [rr]^{f_2}_{\gamma_2^{\bullet}}&&X_3}$ and $\xymatrix{Y_2\ar [rr]^{g_2}_{\xi_2^{\bullet}}&&Y_3}$, all endowed with locally finite systems of measures, satisfying that $p_1=p_2\circ f_1$, $q_1=q_2\circ g_1$, $p_2=p_3\circ f_2$ and $q_2=q_3\circ g_2$. Finally, assume that $\gamma_1^{\bullet}$ and $\xi_1^{\bullet}$ are locally bounded. This data allows us to implement the fibred product construction above, giving rise to the following diagram, which is commutative as a diagram of topological spaces and continuous maps:
$$
\xy \xymatrix@R=.2in@C=.2in{ & X_1*Y_1\ar[rr]^{f_1 *g_1}_{(\gamma_1 *\xi_1)^{\bullet}}\ar@{-}[dd]\ar[ddl]&&X_2*Y_2\ar[rr]^{f_2*g_2}_{(\gamma_2*\xi_2)^{\bullet}}\ar@{-}[dd]\ar[ddl]&&X_3*Y_3\ar[dddd]\ar[ddl]\\%
                          \hspace{0in}&&\hspace{0in}&&&\hspace{0in}\\%
              X_1\ar[rr]^(.65){f_1}_(.65){\gamma_1^{\bullet}}\ar[dddd]^(.3){p_1}   &{\ }^{\ }\ar[dd]&X_2\ar[rr]^(.65){f_2}_(.65){\gamma_2^{\bullet}}\ar[dddd]^(.3){p_2}&{\ }^{\ } \ar[dd]&X_3\ar[dddd]^(.3){p_3}&\vspace{-2in}\\%
              &&&&&\\%
                          &Y_1\ar@{-}[r]^(.65){g_1}_(.65){\xi_1^{\bullet}}  \ar[ddl]^{q_1} &\hspace{.06in}\ar[r]&Y_2\ar@{-}[r]^(.65){g_2}_(.65){\xi_2^{\bullet}}\ar[ddl]^{q_2}&\hspace{.06in}\ar[r]&Y_3\ar[ddl]^{q_3}\\%
                          &&&&&\\%
             Z\ar[rr]^{\scriptstyle\rm id}&&Z\ar[rr]^{\scriptstyle\rm id}&&Z&} \endxy%
$$
Loosely speaking, the following proposition states that fibred products and compositions of systems of measures, commute.

\begin{prop}\label{prop:fibred products commute with compositions}
In the above setting,
$$[(\gamma_2 *\xi_2)\circ (\gamma_1*\xi_1)]^{\bullet}=[(\gamma_2\circ\gamma_1)*(\xi_2\circ\xi_1)]^{\bullet}.$$
\end{prop}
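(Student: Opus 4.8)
The plan is to fix a base point $(x_3,y_3)\in X_3*Y_3$ and prove that the two measures
$$\mu=[(\gamma_2 *\xi_2)\circ (\gamma_1*\xi_1)]^{(x_3,y_3)}\quad\text{and}\quad \nu=[(\gamma_2\circ\gamma_1)*(\xi_2\circ\xi_1)]^{(x_3,y_3)}$$
on $X_1*Y_1$ coincide; since $(f_2*g_2)\circ(f_1*g_1)=(f_2\circ f_1)*(g_2\circ g_1)$, both are systems on the same map. Before comparing them I would record that both are \emph{locally finite}: by Proposition \ref{prop:fibred stability} the fibred product $(\gamma_1*\xi_1)^{\bullet}$ inherits local boundedness from $\gamma_1^{\bullet},\xi_1^{\bullet}$, so composing it with the locally finite system $(\gamma_2*\xi_2)^{\bullet}$ yields a locally finite system by Lemma \ref{lem:condition for composition locally finite}; on the other side, $(\gamma_2\circ\gamma_1)^{\bullet}$ and $(\xi_2\circ\xi_1)^{\bullet}$ are locally finite by the same Lemma, so their fibred product is locally finite by Proposition \ref{prop:fibred product BSM}. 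This is precisely what will let me invoke Lemma \ref{lem:mu=nu for every E Borel}: it suffices to check $\mu(E)=\nu(E)$ on elementary open sets $E=(A\times B)\cap(X_1*Y_1)$, since these form a countable basis as in Remark \ref{rem:elementary open sets} and finite intersections of elementary open sets are again elementary open.

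Next I would evaluate both measures on such an $E$. The right-hand side is immediate: applying calculation (\ref{calc:fibred product}) to the fibred product of $(\gamma_2\circ\gamma_1)^{\bullet}$ and $(\xi_2\circ\xi_1)^{\bullet}$ gives $\nu(E)=(\gamma_2\circ\gamma_1)^{x_3}(A)\cdot(\xi_2\circ\xi_1)^{y_3}(B)$. For the left-hand side I would unfold Definition \ref{def:composition BSM} and then apply (\ref{calc:fibred product}) to the \emph{inner} fibred product, obtaining
$$\mu(E)=\int_{X_2*Y_2}(\gamma_1*\xi_1)^{(x_2,y_2)}(E)\,d(\gamma_2*\xi_2)^{(x_3,y_3)}(x_2,y_2)=\int_{X_2*Y_2}\gamma_1^{x_2}(A)\,\xi_1^{y_2}(B)\,d(\gamma_2*\xi_2)^{(x_3,y_3)}(x_2,y_2).$$

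The crux is then to factor this integral. The measure $(\gamma_2*\xi_2)^{(x_3,y_3)}$ is the restriction of $\gamma_2^{x_3}\times\xi_2^{y_3}$ to the fiber $(f_2*g_2)^{-1}(x_3,y_3)=f_2^{-1}(x_3)\times g_2^{-1}(y_3)$; since $\gamma_2^{x_3}$ is concentrated on $f_2^{-1}(x_3)$ and $\xi_2^{y_3}$ on $g_2^{-1}(y_3)$, the product $\gamma_2^{x_3}\times\xi_2^{y_3}$ is already concentrated on that fiber, so the integral above equals $\int_{X_2\times Y_2}\gamma_1^{x_2}(A)\,\xi_1^{y_2}(B)\,d(\gamma_2^{x_3}\times\xi_2^{y_3})$. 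The integrand is a product of a nonnegative Borel function of $x_2$ (as $\gamma_1^{\bullet}$ is a BSM) and a nonnegative Borel function of $y_2$ (as $\xi_1^{\bullet}$ is a BSM), hence jointly Borel on our second countable spaces, and $\gamma_2^{x_3},\xi_2^{y_3}$ are $\sigma$-finite, being locally finite. Tonelli's Theorem then factors the integral, and by Definition \ref{def:composition BSM} each factor is a composition:
$$\mu(E)=\left(\int_{X_2}\gamma_1^{x_2}(A)\,d\gamma_2^{x_3}(x_2)\right)\left(\int_{Y_2}\xi_1^{y_2}(B)\,d\xi_2^{y_3}(y_2)\right)=(\gamma_2\circ\gamma_1)^{x_3}(A)\cdot(\xi_2\circ\xi_1)^{y_3}(B)=\nu(E).$$
I expect the main obstacle to be exactly this factorization step: one must carefully justify passing from the fiber in $X_2*Y_2$ to the full product $X_2\times Y_2$ via concentration, and then verify the $\sigma$-finiteness and joint-measurability hypotheses of Tonelli. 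Once equality is established on all elementary open sets, Lemma \ref{lem:mu=nu for every E Borel} yields $\mu=\nu$ on all Borel sets, and as $(x_3,y_3)$ was arbitrary the two systems coincide.
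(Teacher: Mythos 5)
Your proposal is correct and follows essentially the same route as the paper's proof: fix $(x_3,y_3)$, establish local finiteness of both systems via Proposition \ref{prop:fibred stability}, Lemma \ref{lem:condition for composition locally finite} and Proposition \ref{prop:fibred product BSM}, verify equality on elementary open sets by factoring the integral (the paper cites Fubini where you invoke Tonelli, and your concentration argument makes explicit the step the paper performs implicitly when it replaces integration over $X_2*Y_2$ by the iterated integral against $\gamma_2^{x_3}$ and $\xi_2^{y_3}$), and conclude with Lemma \ref{lem:mu=nu for every E Borel}.
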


\begin{proof}
Both $[(\gamma_2 *\xi_2)\circ (\gamma_1*\xi_1)]^{\bullet}$ and $[(\gamma_2\circ\gamma_1)*(\xi_2\circ\xi_1)]^{\bullet}$ are systems of measures on the map from $X_1*Y_1$ to $X_3*Y_3$, defined by $(x_1,y_1)\mapsto (f_2(f_1(x_1)), g_2(g_1(y_1)))$. By Proposition \ref{prop:fibred product BSM}, $(\gamma_1*\xi_1)^{\bullet}$ and $(\gamma_2*\xi_2)^{\bullet}$ are locally finite, the former being also locally bounded by Proposition \ref{prop:fibred stability}. Thus, by Lemma \ref{lem:condition for composition locally finite}, $[(\gamma_2 *\xi_2)\circ (\gamma_1*\xi_1)]^{\bullet}$ is a locally finite system of measures. Moreover, by Lemma \ref{lem:condition for composition locally finite}, $(\gamma_2\circ\gamma_1)^{\bullet}$ and $(\xi_2\circ\xi_1)^{\bullet}$ are locally finite, implying in turn that $[(\gamma_2\circ\gamma_1)*(\xi_2\circ\xi_1)]^{\bullet}$ is locally finite by Proposition \ref{prop:fibred product BSM}.

Fix $(x_3,y_3)\in X_3*Y_3$. For any Borel set $E \subseteq X_1*Y_1$, define
$$\mu (E) = [(\gamma_2 *\xi_2)\circ (\gamma_1*\xi_1)]^{(x_3,y_3)} (E) \quad \text{and} \quad \nu(E)=[(\gamma_2\circ\gamma_1)*(\xi_2\circ\xi_1)]^{(x_3,y_3)}(E).$$ Being extracted from locally finite systems of measures, $\mu$ and $\nu$ are locally finite measures on $X_1*Y_1$.

Next, let $E=(A\times B)\cap (X_1*Y_1)$ be an elementary open set. Using the definitions of fibred products and compositions, along with Fubini's theorem, we get
\begin{eqnarray}
\mu (E) &=& [(\gamma_2 *\xi_2)\circ (\gamma_1*\xi_1)]^{(x_3,y_3)}(E)\nonumber\\
&=&\int_{X_2*Y_2}(\gamma_1*\xi_1)^{(x_2,y_2)}(E)\ d(\gamma_2*\xi_2)^{(x_3,y_3)}(x_2,y_2)\nonumber\\
&=&\int_{Y_2}\int_{X_2}(\gamma_1*\xi_1)^{(x_2,y_2)}(E)\ d\gamma_2^{x_3}(x_2)d\xi_2^{y_3}(y_2)\nonumber\\
&=&\int_{Y_2}\int_{X_2} \gamma_1^{x_2}(A)\xi_1^{y_2}(B) d\gamma_2^{x_3}(x_2)d\xi_2^{y_3}(y_2)\nonumber\\
&=&\left(\int_{X_2}\gamma_1^{x_2}(A)d\gamma_2^{x_3}(x_2)\right)\cdot \left(\int_{Y_2} \xi_1^{y_2}(B) d\xi_2^{y_3}(y_2)\right)\nonumber\\
&=&(\gamma_2\circ \gamma_1)^{x_3}(A)\cdot(\xi_2\circ\xi_1)^{y_3}(B)\nonumber\\&=&[(\gamma_2\circ \gamma_1)*(\xi_2\circ\xi_1)]^{(x_3,y_3)}(E)\nonumber\\
&=& \nu(E).\nonumber
\end{eqnarray}

Finally, let $\{A_n\}_{n=1}^{\infty}$ and $\{B_m\}_{m=1}^{\infty}$ be bases for the topology of $X_1$ and $Y_1$ respectively. The collection ${\mathcal B}=\{(A_n\times B_m)\cap(X_1 *Y_1)\}_{n,m}$ is a countable basis for the topology of $X_1 *Y_1$ consisting of elementary open sets. Moreover, we have seen that $\mu$ and $\nu$ agree on finite intersections of sets in ${\mathcal B}$, since these are also of the form $E=(A\times B)\cap (X_1*Y_1)$. We can now apply lemma \ref{lem:mu=nu for every E Borel} to the basis $\mathcal{B}$ and the locally finite measures $\mu$ and $\nu$, and conclude that $\mu(E)=\nu(E)$ for any Borel set $E\subseteq X_1*Y_1$. Since $(x_3,y_3)$ was arbitrary, this completes the proof.
\end{proof}

\section{Disintegration}\label{sec:disintegration}

Disintegration of measures (sometimes called decomposition) has received vast attention in the literature. The purpose of presenting it here is limited to providing versions and derivatives of the fundamental result (Theorem \ref{thm:disintegration}, Corollary \ref{cor:disintegration locally finite} and Proposition \ref{prop:locally bounded disintegration}) which are consistent with our approach and terminology and suitable for our needs. This is why we chose to quote Fabec \cite{fabec-book}, rather than probably the most original source (von Neumann \cite{von-Neumann}) or alternatively more generalized versions. We do refer the reader interested in tracing the theorem historically to Ramsay (\cite{ramsay71}, page 264), which in turn cites Mackey, Halmos, and ultimately von Neumann. Throughout this section we shall assume all spaces to be second countable, locally compact and Hausdorff.

\begin{mydef}\label{def:measure-class-preserving}
Let $(X,\mu)$ and $(Y,\nu)$ be measure spaces. We will say that a Borel map $f:X \rightarrow Y$ is \textbf{measure-preserving} if $f_*\mu = \nu$. We will say that $f$ is \textbf{measure-class-preserving} if $f_*\mu \sim \nu$.
\end{mydef}

In the above definition $f_*$ is the push-forward, defined for any Borel set $F \subset Y$ by $f_* \mu(F) = \mu (f^{-1}(F))$, and $\sim$ denotes equivalence of measures in the sense of being mutually absolutely continuous.

\begin{mydef}\label{def:disintegration}
Let $(X,\mu)$ and $(Y,\nu)$ be measure spaces, and let $f:X \rightarrow Y$ be a Borel map. A system of measures $\gamma^{\bullet}$ on $f$ will be called a \textbf{disintegration} of $\mu$ with respect to $\nu$ if $\displaystyle \mu (E) = \int_Y \gamma^y (E) d\nu (y)$ for every Borel set $E \subseteq X$.
\end{mydef}

\begin{lem}\label{lem:probability disintegration implies measure preserving}
If $\gamma^{\bullet}$ is a system of probability measures on $f$ which is a disintegration of $\mu$ with respect to $\nu$, then $f$ is measure preserving.
\end{lem}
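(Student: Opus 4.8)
The plan is to show that $f$ is measure-preserving, i.e. that $f_*\mu = \nu$, by unwinding the definition of the push-forward and invoking the disintegration hypothesis. Recall that for any Borel set $F \subseteq Y$, the push-forward is defined by $f_*\mu(F) = \mu(f^{-1}(F))$, so it suffices to verify that $\mu(f^{-1}(F)) = \nu(F)$ for every Borel set $F \subseteq Y$.

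First I would apply the disintegration identity to the particular Borel set $E = f^{-1}(F) \subseteq X$, which gives
\[
\mu(f^{-1}(F)) = \int_Y \gamma^y(f^{-1}(F)) \, d\nu(y).
\]
The key step is then to evaluate the integrand $\gamma^y(f^{-1}(F))$. Since $\gamma^{\bullet}$ is a system of measures on $f$, each $\gamma^y$ is concentrated on the fiber $f^{-1}(y)$. I would argue that for a fixed $y$, the fiber $f^{-1}(y)$ is entirely contained in $f^{-1}(F)$ when $y \in F$, and is disjoint from $f^{-1}(F)$ when $y \notin F$. Consequently, $\gamma^y(f^{-1}(F)) = \gamma^y(X) \cdot \chi_{_F}(y)$ up to a null set: concretely, if $y \in F$ then $f^{-1}(y) \subseteq f^{-1}(F)$, so $\gamma^y(f^{-1}(F)) = \gamma^y(X) = 1$ using that $\gamma^{\bullet}$ is a system of \emph{probability} measures; and if $y \notin F$ then $f^{-1}(y) \cap f^{-1}(F) = \emptyset$, so by concentration $\gamma^y(f^{-1}(F)) \leq \gamma^y(X \setminus f^{-1}(y)) = 0$. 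In both cases $\gamma^y(f^{-1}(F)) = \chi_{_F}(y)$.

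Substituting this back yields
\[
\mu(f^{-1}(F)) = \int_Y \chi_{_F}(y) \, d\nu(y) = \nu(F),
\]
which is exactly $f_*\mu(F) = \nu(F)$. Since $F$ was an arbitrary Borel subset of $Y$, this shows $f_*\mu = \nu$, so $f$ is measure-preserving.

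I do not anticipate a serious obstacle here; the argument is a short computation. The only point requiring a modicum of care is the evaluation of $\gamma^y(f^{-1}(F))$, where I must correctly use both the concentration of $\gamma^y$ on $f^{-1}(y)$ (to handle $y \notin F$) and the normalization $\gamma^y(X) = 1$ (to handle $y \in F$); it is precisely the probability hypothesis that makes the indicator $\chi_{_F}$ appear with the correct constant, so that the integral collapses to $\nu(F)$.
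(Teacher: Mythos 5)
Your proof is correct and follows essentially the same route as the paper's: both reduce to the pointwise identity $\gamma^y(f^{-1}(F)) = \chi_{_F}(y)$, obtained from concentration of $\gamma^y$ on the fiber $f^{-1}(y)$ together with the normalization $\gamma^y(X)=1$, and then integrate against $\nu$. The only cosmetic remark is that your phrase ``up to a null set'' is unnecessary, since, as your own case analysis shows, the identity holds for every $y \in Y$.
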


\begin{proof}
Since $\gamma^{\bullet}$ is a system of probability measures, $\gamma^y$ is concentrated on $f^{-1}(y)$ and $\gamma^y (f^{-1}(y)) = 1$ for any $y \in Y$. Therefore, $\gamma^y (f^{-1}(F)) = \chi_{_F}(y)$ for any Borel set $F \subseteq Y$. Thus, for any Borel set $F \subseteq Y$ we have
\[
f_* \mu(F) \ = \ \mu (f^{-1}(F)) \ = \ \int_Y \gamma^y (f^{-1}(F)) d\nu (y) \ = \ \int_Y \chi_{_F}(y) d\nu (y) \ = \ \nu (F),
\]
so $f$ is measure preserving.
\end{proof}

\begin{lem}\label{lem:disintegration implies measure class preserving}
Let $\gamma^{\bullet}$ be a system of measures on $f$ which is positive on open sets. If $\gamma^{\bullet}$ is a disintegration of $\mu$ with respect to $\nu$, then $f$ is measure-class-preserving.
\end{lem}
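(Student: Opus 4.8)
The plan is to identify the $\nu$-density of $f_*\mu$ explicitly and then read mutual absolute continuity off it. First I would apply the disintegration identity to preimages $f^{-1}(F)$, $F\subseteq Y$ Borel. Since $\gamma^y$ is concentrated on $f^{-1}(y)$, for any Borel $S\subseteq X$ one has $\gamma^y(S)=\gamma^y(S\cap f^{-1}(y))$; applying this with $S=f^{-1}(F)$ and noting that $f^{-1}(F)\cap f^{-1}(y)=f^{-1}(F\cap\{y\})$ equals $f^{-1}(y)$ if $y\in F$ and $\emptyset$ otherwise, we obtain $\gamma^y(f^{-1}(F))=h(y)\,\chi_{_F}(y)$, where $h(y):=\gamma^y(f^{-1}(y))=\gamma^y(X)$. (Here $h$ is a Borel function, since the disintegration integral presupposes that $\gamma^{\bullet}$ is a BSM.) The disintegration formula then yields
\[
f_*\mu(F)=\mu(f^{-1}(F))=\int_Y \gamma^y(f^{-1}(F))\,d\nu(y)=\int_F h(y)\,d\nu(y)
\]
for every Borel $F\subseteq Y$; that is, $f_*\mu$ has $\nu$-density $h$. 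From this representation, $f_*\mu\ll\nu$ is immediate: $\nu(F)=0$ forces $\int_F h\,d\nu=0$, hence $f_*\mu(F)=0$.

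The reverse inclusion $\nu\ll f_*\mu$ is the main obstacle, and it is exactly here that positivity on open sets enters. Because $f_*\mu(F)=\int_F h\,d\nu$, the relation $\nu\ll f_*\mu$ amounts to $h>0$ holding $\nu$-almost everywhere: if $f_*\mu(F)=0$ then $h=0$ $\nu$-a.e.\ on $F$, so $\nu(F\cap\{h>0\})=0$. The crux is therefore to show that $h(y)>0$ on every nonempty fiber, which I would argue pointwise. Fix $y$ with $f^{-1}(y)\neq\emptyset$ and pick $x\in f^{-1}(y)$; every open neighborhood $A$ of $x$ satisfies $A\cap f^{-1}(y)\neq\emptyset$, so positivity on open sets gives $\gamma^y(A)>0$, whence $\gamma^y(X)\geq\gamma^y(A)>0$ and $h(y)=\gamma^y(f^{-1}(y))=\gamma^y(X)>0$. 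Equivalently, $x\in supp(\gamma^y)$, so $\gamma^y$ is not the zero measure.

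Thus $h$ can vanish only at points $y$ with $f^{-1}(y)=\emptyset$, i.e.\ outside the range of $f$, and the argument is completed by observing that $\nu$ assigns no mass to $Y\setminus f(X)$ (as holds, for instance, for surjective $f$, and more generally when $\nu$ is concentrated on the range), so that $\{h=0\}$ is $\nu$-null and $\nu(F)=\nu(F\cap\{h>0\})=0$. Combining the two inclusions gives $f_*\mu\sim\nu$, i.e.\ $f$ is measure-class-preserving. The delicate point to pin down carefully is this last reduction of $\nu\ll f_*\mu$ to the strict positivity of the fiber masses $h(y)$, together with the observation that positivity on open sets is precisely the hypothesis guaranteeing $\gamma^y\neq 0$ on each nonempty fiber.
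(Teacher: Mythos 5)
Your proof follows essentially the same route as the paper's: use concentration of $\gamma^y$ on $f^{-1}(y)$ to get $\gamma^y(f^{-1}(F))=\chi_{_F}(y)\,\gamma^y(X)$, hence $f_*\mu(F)=\int_F \gamma^y(X)\,d\nu(y)$, read off $f_*\mu\ll\nu$, and then invoke positivity on open sets to make the density $h(y)=\gamma^y(X)$ strictly positive. The only divergence is at the last step, and your hesitation there is exactly the right instinct: the paper simply asserts that positivity on open sets makes $\gamma^{\bullet}(X)$ a positive function on all of $Y$, which is true only at points $y$ with $f^{-1}(y)\neq\emptyset$; for $y$ outside the range of $f$, concentration on $f^{-1}(y)=\emptyset$ forces $\gamma^y=0$, while positivity on open sets holds vacuously. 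You correctly isolate this and note that one then needs $\nu(Y\setminus f(X))=0$ to get $\nu\ll f_*\mu$. However, that fact is not derivable from the lemma's hypotheses, so your closing ``observation'' is really an unstated extra assumption --- and without it the statement is actually false. For instance, take $X=\{0\}$, $Y=\{0,1\}$ discrete, $f(0)=0$, $\mu=\delta_0$, $\nu=\delta_0+\delta_1$, $\gamma^0=\delta_0$ and $\gamma^1=0$: this is a disintegration of $\mu$ with respect to $\nu$, positive on open sets (vacuously at $y=1$), yet $f_*\mu=\delta_0$ is not equivalent to $\nu$. So your proof and the paper's have the same structure and the same gap at the same spot; the difference is that you flagged it explicitly, whereas the paper's phrase ``$\gamma^{\bullet}(X)$ is a positive function on $Y$'' silently assumes every fiber is nonempty, i.e.\ that $f$ is surjective (or at least that $\nu$ is concentrated on $f(X)$).
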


\begin{proof}
Let $F \subseteq Y$ be a Borel set. For any $y\in Y$, we have $\gamma^y (f^{-1}(F))=\chi_{_F}(y) \cdot \gamma^y(f^{-1}(y)) = \chi_{_F}(y) \cdot \gamma^y(X)$. Therefore $$ f_* \mu(F) \ = \ \mu (f^{-1}(F)) \ = \ \int_Y \gamma^y (f^{-1}(F)) d\nu (y) \ = \ \int_Y \chi_{_F}(y)\gamma^{y}(X) d\nu (y) \ = \ \int_F \gamma^{y}(X) d\nu (y).$$
This shows that $f_* \mu$ is absolutely continuous with respect to $\nu$. Moreover, since $\gamma^{\bullet}$ is positive on open sets, $\gamma^{\bullet}(X)$ is a positive function on $Y$, and thus $f_* \mu$ is equivalent to $\nu$. We conclude that $f$ is measure-class-preserving.
\end{proof}

The converse to the previous lemmas is less trivial. The following theorem is a restatement of Theroem I.27 in \cite{fabec-book}. The original theroem requires $X$ to be a standard Borel space, which is a Polish space (i.e. a second countable topological space admitting a complete metric that generates the topology), together with its Borel $\sigma$-algebra. However, recall that our spaces are assumed to be locally compact, Hausdorff and second countable, hence they are standard Borel spaces. We refer the reader to a paper by Ramsay \cite{ramsay} for a discussion of these facts.

\begin{theorem}[\cite{fabec-book}, Theorem I.27]\label{thm:disintegration}
Let $(X,\mu)$ and $(Y,\nu)$ be spaces equipped with $\sigma$-finite measures, and let $f:X \rightarrow Y$ be a measure-class-preserving Borel map. Then there exists a BSM $\gamma^{\bullet}$ on $f$ which is a disintegration of $\mu$ with respect to $\nu$. Moreover, if $\gamma_1^{\bullet},\gamma_2^{\bullet}$ are two disintegrations, then $\gamma_1^{y}=\gamma_2^{y}$ for $\nu$-almost every $y\in Y$.
\end{theorem}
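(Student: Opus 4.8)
The plan for existence is to reduce the statement to the cited theorem of Fabec, whose construction is carried out for a standard Borel space $X$. First I would record that the hypotheses match: our standing assumptions (second countable, locally compact, Hausdorff) guarantee, via the facts collected by Ramsay, that $X$ and $Y$ carry standard Borel structures, so Fabec's theorem applies once the remaining discrepancies between its formulation and ours are bridged. Two features require attention: the measures are only $\sigma$-finite rather than finite, and $f$ is only measure-class-preserving rather than measure-preserving.

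The measure-class-preserving hypothesis is handled by a Radon--Nikodym adjustment. Since $f_*\mu \sim \nu$, there is a Borel density $h = \frac{d(f_*\mu)}{d\nu}$ which is positive and finite $\nu$-almost everywhere. As a map into $(Y, f_*\mu)$ the map $f$ is measure-preserving, so one first obtains a disintegration $\tilde\gamma^{\bullet}$ of $\mu$ over $f_*\mu$ and then sets $\gamma^y = h(y)\,\tilde\gamma^y$. A direct computation gives $\int_Y \gamma^y(E)\,d\nu(y) = \int_Y \tilde\gamma^y(E)\,h(y)\,d\nu(y) = \int_Y \tilde\gamma^y(E)\,d(f_*\mu)(y) = \mu(E)$, so $\gamma^{\bullet}$ disintegrates $\mu$ over $\nu$, and the BSM property survives because multiplication by the fixed Borel function $h$ carries the Borel functions $y \mapsto \tilde\gamma^y(E)$ to Borel functions. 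The $\sigma$-finiteness is absorbed by decomposing $X$ and $Y$ into countably many pieces of finite measure and reassembling the fibrewise pieces. The genuinely hard kernel --- which is exactly the content of Fabec's theorem, and the reason we quote it rather than reprove it --- is the passage, for $\nu$-almost every $y$, from the finitely additive set function produced by Radon--Nikodym derivatives on a countable generating algebra to an honest countably additive Borel measure concentrated on the fibre $f^{-1}(y)$. This step is where the standard Borel (hence compact-metrizable and tight) structure of $X$ is indispensable, and it is the main obstacle in any self-contained treatment.

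For uniqueness, suppose $\gamma_1^{\bullet}$ and $\gamma_2^{\bullet}$ are two disintegrations. Since each $\gamma_i^y$ is concentrated on $f^{-1}(y)$, for every Borel $E \subseteq X$ and Borel $F \subseteq Y$ one has $\gamma_i^y(E \cap f^{-1}(F)) = \chi_{_F}(y)\,\gamma_i^y(E)$, whence
$$\int_F \gamma_1^y(E)\,d\nu(y) = \mu\big(E \cap f^{-1}(F)\big) = \int_F \gamma_2^y(E)\,d\nu(y).$$
As this holds for every Borel $F$ and $\nu$ is $\sigma$-finite, the functions $y \mapsto \gamma_1^y(E)$ and $y \mapsto \gamma_2^y(E)$ agree $\nu$-almost everywhere, for each fixed $E$. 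Fixing a countable algebra that generates the Borel $\sigma$-algebra of $X$ and contains a countable basis closed under finite intersections, and taking the union over this countable family of the corresponding $\nu$-null exceptional sets, we obtain a single $\nu$-null set off which $\gamma_1^y$ and $\gamma_2^y$ agree on the entire generating algebra. For each such $y$, agreement of the two fibre measures on this generating $\pi$-system upgrades to agreement on all Borel sets by Lemma \ref{lem:mu=nu for every E Borel} (working on the finite pieces of the $\sigma$-finite decomposition, where the fibre measures are finite). Hence $\gamma_1^y = \gamma_2^y$ for $\nu$-almost every $y$, as claimed.
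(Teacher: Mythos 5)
The paper itself supplies no proof of this theorem: the statement is quoted from Fabec's Theorem I.27, and the only argument given is the remark preceding it that the standing assumptions (second countable, locally compact, Hausdorff) make $X$ and $Y$ standard Borel spaces, citing Ramsay. Your core strategy --- deferring the hard existence step to Fabec and invoking the standard-Borel reduction --- therefore matches the paper exactly. Your self-contained uniqueness argument is the standard one and is essentially sound, though your appeal to Lemma \ref{lem:mu=nu for every E Borel} needs bookkeeping: the fibre measures $\gamma_i^y$ are not known to be locally finite (the paper only obtains that later, in Corollary \ref{cor:disintegration locally finite}, under the extra hypothesis that $\mu$ is locally finite), so you must instead fix a countable Borel decomposition of $X$ into sets of finite $\mu$-measure, discard the $\nu$-null set of $y$ for which some $\gamma_i^y$ is infinite on some piece, include the pieces in your countable $\pi$-system, and conclude by a Dynkin-type argument rather than by that lemma as stated.

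The genuine gap is in your Radon--Nikodym bridge for the measure-class-preserving hypothesis. You assert that $h = d(f_*\mu)/d\nu$ is finite $\nu$-a.e.\ and that $\mu$ admits a disintegration $\tilde\gamma^{\bullet}$ over $f_*\mu$; both claims fail in general, because $f_*\mu$ need not be $\sigma$-finite even when $\mu$ is. Concretely, let $X=[0,1]\times\mathbb{N}$ with $\mu = \lambda \times c$ ($\lambda$ Lebesgue, $c$ counting measure), $Y=[0,1]$ with $\nu=\lambda$, and $f$ the projection. Then $f_*\mu(F)=0$ when $\nu(F)=0$ and $f_*\mu(F)=\infty$ otherwise, so $f_*\mu\sim\nu$ and all hypotheses of the theorem hold; but $f_*\mu$ is not $\sigma$-finite, any density $h$ equals $+\infty$ $\nu$-a.e., and no disintegration of $\mu$ over $f_*\mu$ can exist, since $\int_Y \tilde\gamma^y(E)\, d(f_*\mu)(y)$ takes only the values $0$ and $\infty$ while $\mu([0,1]\times\{0\})=1$. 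The conclusion of the theorem nevertheless holds here, with $\gamma^y=\delta_y\times c$. The repair is to do your two reductions in the opposite order, working on the $X$ side first: replace $\mu$ by an equivalent \emph{finite} measure $\mu'$ (possible since $\mu$ is $\sigma$-finite), so that $f_*\mu'$ is finite and your Radon--Nikodym adjustment is legitimate; then recover a disintegration of $\mu$ from one of $\mu'$ by $\gamma^y(E)=\int_E \frac{d\mu}{d\mu'}\, d\gamma'^y$, whose Borel dependence on $y$ follows from Lemma \ref{lem:equiv_BSM}. (Alternatively, note that Fabec's theorem as cited already treats $\sigma$-finite measures and measure-class-preserving maps, so no bridge is needed at all --- which is the paper's tacit position.)
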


\begin{cor}\label{cor:disintegration locally finite} Let $(X,\mu)$ and $(Y,\nu)$ be spaces equipped with $\sigma$-finite measures, and let $f:X \rightarrow Y$ be a measure-class-preserving Borel map. If $\mu$ is a \emph{locally finite} measure, then there exists a \emph{locally finite} BSM $\alpha^{\bullet}$ on $f$ which is a disintegration of $\mu$ with respect to $\nu$. Moreover, if $\alpha_1^{\bullet},\alpha_2^{\bullet}$ are two disintegrations, then $\alpha_1^{y}=\alpha_2^{y}$ for $\nu$-almost every $y\in Y$.
\end{cor}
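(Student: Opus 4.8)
The plan is to start from the disintegration $\gamma^{\bullet}$ furnished by Theorem \ref{thm:disintegration} and to modify it on a single $\nu$-null set so as to upgrade it to a \emph{locally finite} BSM, without disturbing the disintegration identity. The hypotheses of Theorem \ref{thm:disintegration} are met verbatim, so there is a BSM $\gamma^{\bullet}$ on $f$ satisfying $\mu(E) = \int_Y \gamma^y(E)\, d\nu(y)$ for every Borel $E \subseteq X$. The only obstruction to $\gamma^{\bullet}$ being locally finite in our sense is that our definition requires $\gamma^y(U_x) < \infty$ for \emph{every} $y$, whereas the disintegration identity will only control $\gamma^y$ for $\nu$-almost every $y$.

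First I would exploit the standing assumptions (second countable, locally compact, Hausdorff) to fix a countable basis $\{U_n\}_{n=1}^{\infty}$ of relatively compact open sets. Since $\mu$ is locally finite, every compact set has finite $\mu$-measure, so $\mu(U_n) \le \mu(\overline{U_n}) < \infty$ for each $n$. The disintegration identity then gives $\int_Y \gamma^y(U_n)\, d\nu(y) = \mu(U_n) < \infty$, whence $\gamma^y(U_n) < \infty$ for $\nu$-almost every $y$. Writing $N_n = \{y \in Y : \gamma^y(U_n) = \infty\}$ — a Borel set, being the preimage of $\{\infty\}$ under the Borel function $\gamma^{\bullet}(U_n)$ — we have $\nu(N_n) = 0$, so the countable union $N = \bigcup_n N_n$ is a Borel $\nu$-null set, and for every $y \notin N$ one has $\gamma^y(U_n) < \infty$ for all $n$.

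Next I would define the modified system $\alpha^{\bullet}$ by $\alpha^y = \gamma^y$ for $y \notin N$ and $\alpha^y = 0$ for $y \in N$. Each $\alpha^y$ is a Borel measure concentrated on $f^{-1}(y)$, so $\alpha^{\bullet}$ is a system of measures on $f$. For any Borel $E$ we have $\alpha^{\bullet}(E) = \gamma^{\bullet}(E)\cdot \chi_{_{Y\setminus N}}$, which is Borel, so $\alpha^{\bullet}$ is a BSM. Because $\nu(N) = 0$, the integral is unaffected: $\int_Y \alpha^y(E)\, d\nu(y) = \int_{Y\setminus N} \gamma^y(E)\, d\nu(y) = \mu(E)$, so $\alpha^{\bullet}$ is again a disintegration. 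Finally, given $x \in X$, pick $U_n \ni x$ from the basis; then $\alpha^y(U_n) < \infty$ for every $y$ (either $\gamma^y(U_n)<\infty$ when $y\notin N$, or $\alpha^y = 0$ when $y \in N$), which is exactly local finiteness of $\alpha^{\bullet}$.

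For the uniqueness clause, I would simply observe that any locally finite disintegration is in particular a disintegration in the sense of Definition \ref{def:disintegration}, so the essential uniqueness already asserted in Theorem \ref{thm:disintegration} applies directly: two such disintegrations $\alpha_1^{\bullet}, \alpha_2^{\bullet}$ satisfy $\alpha_1^y = \alpha_2^y$ for $\nu$-almost every $y$. The only genuinely delicate point in the whole argument is the passage from ``$\nu$-almost every $y$'' to ``every $y$'': the disintegration identity is blind to null sets, so the local-finiteness-everywhere requirement of our definition can only be secured by actively redefining the system on $N$. The role of local finiteness of $\mu$ together with the relatively compact basis is precisely to collapse the exceptional behavior into a single $\nu$-null set $N$ that is simultaneously good for all basic neighborhoods.
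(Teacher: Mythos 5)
Your proposal is correct and follows essentially the same route as the paper's own proof: invoke Theorem \ref{thm:disintegration}, use a countable basis of sets of finite $\mu$-measure to collect the exceptional fibers into a single Borel $\nu$-null set, redefine the system to be zero there, and note that the disintegration identity and the a.e.\ uniqueness are unaffected. The only cosmetic difference is that you obtain the finite-measure basis via relatively compact basic open sets, whereas the paper directly passes to the sub-collection of basis elements with finite $\mu$-measure; both are valid under the standing assumptions.
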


\begin{proof}
By Theorem \ref{thm:disintegration}, there exists a BSM $\gamma^{\bullet}$ on $f$ which is a disintegration of $\mu$ with respect to $\nu$, and it is unique $\nu$-almost everywhere in $Y$. Let ${\mathcal B}=\{B_n\}_{n=1}^{\infty}$ be a countable basis for the topology of $X$. Since $\mu$ is locally finite, it is straightforward to verify that the sub-collection $\{B \in \mathcal{B} ~|~ \mu(B)<\infty \}$ is itself a basis for $X$. Therefore, we can assume that all $B_n \in \mathcal{B}$ satisfy $\mu(B_n)=\int_Y\gamma^y(B_n)d\nu(y)<\infty$. It follows that $\gamma^y(B_n)<\infty$ for $\nu$-almost all $y\in Y$.

Consider the Borel sets $Y_n= \{ y\in Y ~|~ \gamma^y(B_n)= \infty \}$. By our previous argument, the sets $Y_n$ all have $\nu$-measure zero, hence so does $\bigcup_{n=1}^{\infty}Y_n$. We denote $Y' = Y\setminus \bigcup_{n=1}^{\infty}Y_n$ and define a new BSM $\alpha^{\bullet}$ on $f$ by $$\alpha^y(E)= \begin{cases} \gamma^y(E)& y \in Y' \\ 0 & y \notin Y'  \end{cases}$$ for any Borel set $E \subseteq X$. It is easy to verify that $\alpha^{\bullet}$ is  indeed a BSM on $f$. Moreover, since $\alpha^{y}(E)=\gamma^y(E)$ for $\nu$-almost all $y\in Y$, it follows that $\alpha^{\bullet}$ is also a disintegration of $\mu$ with respect to $\nu$, and the uniqueness $\nu$-almost everywhere in $Y$ holds for $\alpha^{\bullet}$.

It remains to show that $\alpha^{\bullet}$ is locally finite. For any $x\in X$, let $B_n \in {\mathcal B}$ be a neighborhood of $x$. Since $$\alpha^y(B_n)= \begin{cases} \gamma^y(B_n)& y \in Y' \\ 0 & y \notin Y'  \end{cases}$$ it follows that $\alpha^y(B_n)<\infty$ for all $y\in Y$. Thus $\alpha^{\bullet}$ is locally finite, and the proof is complete.
\end{proof}

The next lemma, which is rather elementary, is required for the proof of Proposition \ref{prop:locally bounded disintegration} below. Lacking a formal reference, we include the proof, which is adapted from lecture notes found on the homepage of Gabriel Nagy.

\begin{lem}\label{lem:easy Radon-Nikodym}
Let $\mu,\nu$ be finite measures on a measurable space $(Y, \Sigma)$. Then the Radon-Nikodym derivative $h=d\mu / d\nu$ exists and belongs to $L^{\infty}(Y,\nu)$ if and only if there is a constant $C\geq 0$ such that $\mu(E)\leq C\cdot \nu(E)$ for all $E \in \Sigma$.
\end{lem}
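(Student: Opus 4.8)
The plan is to prove both implications directly from the Radon-Nikodym theorem, with the only genuine content being the extraction of the essential-supremum bound in the reverse direction.

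For the forward implication, I would assume that $h = d\mu/d\nu$ exists and lies in $L^{\infty}(Y,\nu)$. Since $\mu$ is a positive measure, $h \geq 0$ holds $\nu$-almost everywhere, and setting $C = \|h\|_{\infty}$ we have $0 \leq h \leq C$ $\nu$-a.e. Then for every $E \in \Sigma$ I would simply estimate $\mu(E) = \int_E h\, d\nu \leq C \int_E d\nu = C\,\nu(E)$, which is the desired inequality. This direction requires nothing beyond monotonicity of the integral.

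For the reverse implication, suppose $\mu(E) \leq C \cdot \nu(E)$ for all $E \in \Sigma$. First I would observe that this forces $\mu \ll \nu$: if $\nu(E) = 0$, then $\mu(E) \leq C \cdot 0 = 0$. Since $\mu$ and $\nu$ are both finite, the Radon-Nikodym theorem then guarantees that $h = d\mu/d\nu$ exists as a nonnegative element of $L^{1}(Y,\nu)$. It remains only to show that $h \in L^{\infty}(Y,\nu)$, with essential bound $C$.

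The key step, and the only place where any work is needed, is to show that the set where $h$ exceeds $C$ is $\nu$-null. For each $n \geq 1$ I would consider the measurable set $E_n = \{y \in Y : h(y) > C + 1/n\}$. On $E_n$ we have $h \geq C + 1/n$, so $\mu(E_n) = \int_{E_n} h\, d\nu \geq (C + 1/n)\,\nu(E_n)$. Combining this with the hypothesis $\mu(E_n) \leq C\,\nu(E_n)$ yields $\tfrac{1}{n}\,\nu(E_n) \leq 0$, whence $\nu(E_n) = 0$. Taking the union over $n$ gives $\nu(\{h > C\}) = 0$, so $h \leq C$ holds $\nu$-almost everywhere and therefore $h \in L^{\infty}(Y,\nu)$ with $\|h\|_{\infty} \leq C$. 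I do not anticipate any real obstacle here: the finiteness of $\mu$ and $\nu$ ensures that every integral in play is finite, so the level-set argument proceeds without integrability caveats, and the two implications together also recover the clean identification $\|h\|_{\infty} = \inf\{C \geq 0 : \mu \leq C\nu\}$.
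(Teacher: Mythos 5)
Your proof is correct, but it takes a genuinely different route from the paper's in the existence step. You obtain $h = d\mu/d\nu$ by observing that $\mu(E) \le C\cdot\nu(E)$ forces $\mu \ll \nu$ and then invoking the Radon--Nikodym theorem for finite measures. The paper instead constructs $h$ from scratch: from the hypothesis it deduces $\int_Y f\,d\mu \le C\int_Y f\,d\nu$ for every nonnegative measurable $f$, so that the composition $L^2(Y,\nu) \hookrightarrow L^1(Y,\nu) \rightarrow L^1(Y,\mu) \rightarrow \mathbb{R}$, $f \mapsto \int_Y f\,d\mu$, is a bounded linear functional on $L^2(Y,\nu)$; the Riesz representation theorem on the Hilbert space $L^2(Y,\nu)$ then produces $h$ with $\mu(E) = \int_E h\,d\nu$. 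This is von Neumann's proof of Radon--Nikodym specialized to the situation at hand, and it keeps the lemma self-contained---presumably the point, since the authors include the proof precisely because they ``lack a formal reference.'' Your version buys brevity by citing the larger theorem as a black box; the paper's buys independence from it. The concluding level-set argument is essentially shared, but your streamlined form is in fact slightly cleaner: combining $\left(C + \tfrac{1}{n}\right)\nu(E_n) \le \mu(E_n) \le C\,\nu(E_n)$ directly avoids the paper's division by $C$ (its factor $1 + \tfrac{1}{nC}$ is undefined when $C = 0$, a trivial but real edge case that your argument handles uniformly), and you correctly note that finiteness of $\nu$ is what licenses cancelling $\nu(E_n)$ in the inequality.
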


\begin{proof}
Suppose that the Radon-Nikodym derivative $h\!=\!d\mu / d\nu$ exists and is in $L^{\infty}(Y,\nu)$. Then for all $E \in \Sigma$ we have $\mu(E)=\int_E h d\nu \leq \|h\|_{\infty}\cdot \nu(E).$

Conversely, assume that there is a constant $C$ such that $\mu(E)\leq C\cdot \nu(E)$ for all $E \in \Sigma$. A standard argument using simple functions and the Monotone Convergence Theorem, yields $\int_Y f d\mu \leq C\cdot \int_Y f d\nu$, for any measurable function $f:Y\rightarrow [0,\infty]$. It follows that the identity map $f \mapsto f$ is a continuous function $L^1(Y,\nu)\rightarrow L^1(Y,\mu).$ Moreover, we have a composition of continuous linear functions
$L^2(Y,\nu) \hookrightarrow L^1(Y,\nu) \rightarrow L^1(Y,\mu) \rightarrow \mathbb{R}$ given by $f \mapsto f \mapsto f \mapsto \int_Y f\ d\mu$, which gives rise to a continuous linear functional on $L^2(Y,\nu)$. Since $L^2(Y,\nu)$ is a Hilbert space, there exists a function $h \in L^2(Y,\nu)$ such that
$\int_Y fh d\nu = \int_Y f d\mu$ for any $f \in L^2(Y,\nu)$. Setting $f=\chi_{_E}$ we get $\mu(E) = \int_E h d\nu$ for any $E\in \Sigma$, hence $h = d\mu / d\nu$. Denote $A_n=\left\{y\in Y: h(y)\geq C+\frac{1}{n}\right\}$. Then $\mu(A_n)= \int_{A_n} h d\nu \geq \left(C+\frac{1}{n}\right)\nu(A_n) \geq \left(1+\frac{1}{nC}\right)\mu(A_n)$, from which it follows that $0 \geq \frac{1}{nC}\mu(A_n)$, so $\mu(A_n)=0$. By the above inequality, this implies that $\nu(A_n)=0$. Therefore $A=\bigcup_{n=1}^{\infty}A_n=\left\{y\in Y: h(y)> C\right\}$ also satisfies $\nu(A)=0$. Thus $\|h\|_{\infty}\leq C$ and in particular $h \in L^{\infty}(Y,\nu)$.
\end{proof}

The following proposition provides a useful criterion for the existence of a disintegration which is locally bounded. Note that it requires the map $f$ to be continuous.

\begin{prop}\label{prop:locally bounded disintegration}
Let $(X,\mu)$ and $(Y,\nu)$ be spaces equipped with locally finite measures and let $f:X\rightarrow Y$ be a measure class preserving continuous map. The map $f$ admits a disintegration $\alpha^{\bullet}$ which is locally bounded if and only if for any compact set $K\subseteq X$ there exists a constant $C_{_K}$ such that for all Borel sets $E\subseteq Y$, $$\mu(K\cap f^{-1}(E))\leq C_{_K}\cdot \nu(E).$$
\end{prop}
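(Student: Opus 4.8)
The proof rests on a single computation valid for any disintegration. Fix a disintegration $\alpha^{\bullet}$ of $\mu$ with respect to $\nu$ and a compact set $K\subseteq X$. Since each $\alpha^y$ is concentrated on $f^{-1}(y)$, for every Borel set $E\subseteq Y$ we have $\alpha^y(K\cap f^{-1}(E))=\alpha^y\left(K\cap f^{-1}(E)\cap f^{-1}(y)\right)=\chi_{_E}(y)\,\alpha^y(K)$, because $f^{-1}(E)\cap f^{-1}(y)$ equals $f^{-1}(y)$ when $y\in E$ and is empty otherwise. Integrating against $\nu$ and using the disintegration identity yields the key formula
\[
\mu(K\cap f^{-1}(E)) \ = \ \int_E \alpha^y(K)\, d\nu(y),
\]
which I will use in both directions.

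For the forward implication, suppose $f$ admits a locally bounded disintegration $\alpha^{\bullet}$. Since the spaces are locally compact, local boundedness coincides with being bounded on compact sets, so for each compact $K$ there is a constant $C_{_K}$ with $\alpha^y(K)\leq C_{_K}$ for every $y\in Y$. Substituting into the key formula gives $\mu(K\cap f^{-1}(E))=\int_E \alpha^y(K)\,d\nu(y)\leq C_{_K}\,\nu(E)$ for all Borel $E$, which is exactly the asserted inequality.

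For the converse, assume the domination inequality holds. By Corollary \ref{cor:disintegration locally finite} there is a locally finite BSM $\alpha^{\bullet}$ disintegrating $\mu$ with respect to $\nu$; I will adjust it on a $\nu$-null set to make it locally bounded. Fix a countable basis $\{B_n\}$ for $X$ whose members have compact closures (available by local compactness). For each $n$, the key formula identifies $y\mapsto \alpha^y(\overline{B_n})$ as the Radon--Nikodym derivative, with respect to $\nu$, of the measure $E\mapsto \mu(\overline{B_n}\cap f^{-1}(E))$. Both measures become finite once restricted to the compact set $f(\overline{B_n})$ --- on which the first is concentrated, since $\alpha^y(\overline{B_n})=0$ for $y\notin f(\overline{B_n})$, and where $\nu$ is finite by local finiteness --- and the hypothesis dominates the former by $C_n:=C_{\overline{B_n}}$ times the latter. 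Lemma \ref{lem:easy Radon-Nikodym} therefore gives $\alpha^y(\overline{B_n})\leq C_n$ for $\nu$-almost every $y$.

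The remaining, and only delicate, point is to pass from these $\nu$-almost-everywhere bounds to bounds holding for \emph{every} $y$, as local boundedness demands. Let $Y_n=\{y:\alpha^y(\overline{B_n})>C_n\}$, a Borel $\nu$-null set, and set $Y'=Y\setminus \bigcup_n Y_n$, which is co-null. Exactly as in the proof of Corollary \ref{cor:disintegration locally finite}, redefining $\alpha^y$ to be the zero measure for $y\notin Y'$ produces a system that still agrees with the original one $\nu$-almost everywhere, hence remains a BSM and a disintegration of $\mu$. For the modified system we now have $\alpha^y(\overline{B_n})\leq C_n$ for all $y\in Y$ and all $n$. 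Given any $x\in X$, choosing a basic neighborhood $B_n\ni x$ gives $\alpha^y(B_n)\leq \alpha^y(\overline{B_n})\leq C_n$ for every $y$, so $B_n$ witnesses local boundedness at $x$. The main obstacle is precisely this almost-everywhere-to-everywhere surgery, together with the reduction to finite measures needed to invoke Lemma \ref{lem:easy Radon-Nikodym}; both are dispatched by using a countable basis of relatively compact sets and the compactness of the images $f(\overline{B_n})$.
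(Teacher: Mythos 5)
Your proof is correct and takes essentially the same route as the paper's: both rest on the identity $\mu(K\cap f^{-1}(E))=\int_E\alpha^y(K)\,d\nu(y)$, the reduction to finite measures by cutting down to the compact image of $K$ under $f$, and Lemma \ref{lem:easy Radon-Nikodym}. If anything, your converse direction is more complete than the paper's, which compresses the passage from $\nu$-essential boundedness of $\alpha^{\bullet}(K)$ to genuine local boundedness into the unproved remark that $\alpha^{\bullet}$ ``can be taken to be'' locally bounded, whereas you carry out the required null-set modification explicitly over a countable basis of relatively compact sets, in the style of the proof of Corollary \ref{cor:disintegration locally finite}.
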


\begin{proof}
Recall that our spaces are always assumed to be locally compact, Hausdorff and second countable, and as such, every locally finite measure is $\sigma$-finite. By Corollary \ref{cor:disintegration locally finite}, $f$ admits a disintegration $\alpha^{\bullet}$ of $\mu$ with respect to $\nu$, which is unique $\nu$-almost everywhere in $Y$. Note that the system $\alpha^{\bullet}$ can be taken to be locally bounded, or equivalently bounded on compact sets, if and only if for any compact $K \subseteq X$, $\alpha^{\bullet}(K)$ is in $L^{\infty}(Y,\nu)$, i.e. essentially bounded.

For every compact set $K\subseteq X$, consider the measure $\mu_{_K}$ on $Y$ defined by $\mu_{_K}(E):=\mu(K\cap f^{-1}(E)),$
for all Borel sets $E\subseteq Y$. The measure $\mu_{_K}$ is finite since $\mu$ is locally finite, and moreover, since $f$ is measure class preserving, $\mu_{_K}$ is absolutely continuous with respect to $\nu$. Let $h_{_K} = d\mu_{_K}/d\nu$ denote the Radon-Nikodym derivative. Thus, for any Borel subset $E\subseteq Y$, we have $$\mu(K\cap f^{-1}(E))=\mu_{_K}(E)=\int_E h_{_K}(y)\ d\nu(y).$$
On the other hand, $$\mu(K\cap f^{-1}(E))=\int_Y \alpha^y(K\cap f^{-1}(E))\ d\nu(y)=\int_E \alpha^y(K)\ d\nu(y).$$
Therefore, $\int_E h_{_K}\ d\nu=\int_E \alpha^{\bullet}(K)\ d\nu$ for any $E$, hence $h_{_K}=\alpha^{\bullet}(K)$, $\nu$-almost everywhere in $Y$.

Let $\nu_{_K}$ be another measure on $Y$, defined by $\nu_{_K}(E)=\nu(E\cap f(K)).$ The measure $\nu_{_K}$ is finite, since $K$ is compact, $f$ is continuous and $\nu$ is locally finite. Moreover, $\mu_{_K}$ is absolutely continuous with respect to $\nu_{_K}$:
\begin{eqnarray*}
\nu_{_K}(E) \!=\!0 & \Rightarrow & \nu(E\cap f(K)) \!=\!0 \ \ \Rightarrow \ \ f_*\mu(E\cap f(K)) \!=\!0 \ \  \Rightarrow \ \  \mu(f^{-1}(E\cap f(K))) \!=\!0 \\
& \Rightarrow & \mu(f^{-1}(E) \cap f^{-1}(f(K))) \!=\!0 \ \  \Rightarrow \ \   \mu(f^{-1}(E) \cap K) \!=\!0 \ \  \Rightarrow \ \  \mu_{_K}(E) \!=\!0.
\end{eqnarray*}
The Radon-Nikodym derivative $d\mu_{_K} / d\nu_{_K}$ is equal $\nu_{_K}$-almost everywhere to $h_{_K}$, since
\begin{eqnarray*}
\mu_{_K}(E) &=& \mu(K\cap f^{-1}(E)) \ = \ \mu(K\cap f^{-1}(E)\cap f^{-1}(f(K))) \ = \ \mu(K\cap f^{-1}(E\cap f(K))) \\
&=& \int_{E\cap f(K)} h_{_K}(y)\ d\nu(y) \ = \ \int_E h_{_K}(y)\ d\nu_{_K}(y).
\end{eqnarray*}
In particular, $h_{_K} \!=\!0$ $\nu$-almost everywhere outside $f(K)$, since $\int_{E\cap f(K)} h_{_K}(y)\ d\nu(y) = \mu_{_K}(E) = \int_E h_{_K}(y)\ d\nu(y)$ for any Borel subset $E\subseteq Y$. It follows that $h_{_K} \in L^{\infty}(Y,\nu) \Leftrightarrow h_{_K} \in L^{\infty}(Y,\nu_{_K}).$

We now apply Lemma \ref{lem:easy Radon-Nikodym} to the finite measures $\mu_{_K}$ and $\nu_{_K}$: The Radon-Nikodym derivative $d\mu_{_K} / d\nu_{_K}  \in L^{\infty}(Y,\nu_{_K})$ if and only if there is a constant $C\!=\!C_{_K}$ such that $\mu_{_K}(E) \leq C_{_K} \cdot \nu_{_K}(E)$ for all $E\subseteq Y$ Borel. Equivalently: $h_{_K} \in L^{\infty}(Y,\nu)$ if and only if there is a constant $C_{_K}$ such that $\mu(K\cap f^{-1}(E))\leq C_K\cdot \nu(E\cap f(K))$.
Observe that the condition $\mu(K\cap f^{-1}(E))\leq C_K\cdot \nu(E\cap f(K)), \forall E$ is equivalent to the condition $\mu(K\cap f^{-1}(E))\leq C_K\cdot \nu(E), \forall E$. Indeed, the latter implies the former by taking $E \cap f(K)$. Recalling that $h_{_K}=\alpha^{\bullet}(K)$ $\nu$-almost everywhere in $Y$, we conclude that $\alpha^{\bullet}(K) \in L^{\infty}(Y,\nu)$ if and only if there is a constant $C_{_K}$ such that $\mu(K\cap f^{-1}(E))\leq C_K\cdot \nu(E), \forall E$. This completes the proof.
\end{proof}

\section{Systems of measures for groupoids}\label{sec:groupoids}

Terminology in the groupoid literature is often a source for confusion. In this section we give a definition of Haar systems using the terminology we have adopted above, and show that it coincides with the standard definitions.

\begin{mydef}\label{def:system of measures on G}
Let $G$ be a topological groupoid. A system of measures $\lambda^{\bullet}$ on the range map $r:G \rightarrow G^{(0)}$ is said to be a \textbf{system of measures on $G$}.
\end{mydef}

\begin{mydef}\label{def:left_invariant}
A system of measures $\lambda^{\bullet}$ on $G$ is called \textbf{left invariant} if for every $x \in G$ and for every Borel subset $E \subseteq G$, $$\lambda^{d(x)} (E) = \lambda^{r(x)}\left(x \cdot (E \cap G^{d(x)})\right).$$
\end{mydef}

\begin{lem}\label{lem:equivalent left invariance}
A system of measures $\lambda^{\bullet}$ on $G$ is left invariant if and only if for any $x \in G$ and every non-negative Borel function $f$ on $G$, $$\int f(xy)d\lambda^{d(x)}(y) = \int f(y)d\lambda^{r(x)}(y).$$
\end{lem}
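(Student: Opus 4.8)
The plan is to prove the equivalence by the standard ``sets to functions'' bootstrap, exactly as in the proof of Lemma \ref{lem:equiv_BSM}. The structural fact I will use throughout is that for a fixed $x \in G$, left translation $L_x : y \mapsto xy$ is a homeomorphism (in particular a Borel bijection) from the fiber $G^{d(x)} = r^{-1}(d(x))$ onto the fiber $G^{r(x)} = r^{-1}(r(x))$, with inverse $L_{x^{-1}}$. Since $\lambda^{d(x)}$ is concentrated on $G^{d(x)}$ and $\lambda^{r(x)}$ on $G^{r(x)}$, the function $y \mapsto f(xy)$ --- a priori defined only where the product $xy$ makes sense, namely on $G^{d(x)}$ --- is Borel on that fiber as the composition $f \circ L_x$, and extending it by zero off the fiber changes neither its Borel character nor its $\lambda^{d(x)}$-integral. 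This is what makes both sides of the asserted identity well defined.

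For the forward implication I would specialize the functional identity to indicator functions. Fixing $x$ and a Borel set $E \subseteq G$, I set $F = x \cdot (E \cap G^{d(x)})$, a Borel subset of $G^{r(x)}$, and take $f = \chi_{_F}$. The right-hand side evaluates to $\lambda^{r(x)}(F)$, while on $G^{d(x)}$ one has $\chi_{_F}(xy) = \chi_{_{L_x^{-1}(F)}}(y)$ with $L_x^{-1}(F) = E \cap G^{d(x)}$ (using that $L_x$ is a bijection on $G^{d(x)}$), so the left-hand side evaluates to $\lambda^{d(x)}(E \cap G^{d(x)}) = \lambda^{d(x)}(E)$ by concentration. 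This is precisely Definition \ref{def:left_invariant}.

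For the converse I run the usual three-step approximation. First, for $f = \chi_{_F}$ with $F \subseteq G$ Borel, I put $E = \{\, y \in G^{d(x)} : xy \in F \,\} = L_x^{-1}(F \cap G^{r(x)})$; then $E \cap G^{d(x)} = E$ and $x \cdot E = F \cap G^{r(x)}$ because $L_x$ maps $G^{d(x)}$ bijectively onto $G^{r(x)}$. Left invariance now gives $\int \chi_{_F}(xy)\, d\lambda^{d(x)}(y) = \lambda^{d(x)}(E) = \lambda^{r(x)}(x \cdot E) = \lambda^{r(x)}(F) = \int \chi_{_F}(y)\, d\lambda^{r(x)}(y)$, where the concentration of $\lambda^{r(x)}$ on $G^{r(x)}$ lets me drop the intersection with the fiber. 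By linearity the identity extends to nonnegative simple functions, and then, choosing an increasing sequence of nonnegative simple functions $s_n \nearrow f$, the Monotone Convergence Theorem applied to the increasing sequences $y \mapsto s_n(xy)$ and $y \mapsto s_n(y)$, whose integrals agree at each stage, yields the identity for every nonnegative Borel $f$.

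The genuinely delicate point is not the measure theory, which is entirely routine, but the bookkeeping with fibers: one must track where the product $xy$ is defined and use repeatedly that $L_x$ is a bijection between $G^{d(x)}$ and $G^{r(x)}$, together with the concentration of each $\lambda^{u}$ on its fiber, so that intersections with the relevant fibers can be inserted or removed at no cost. Once this is set up cleanly, both directions are immediate.
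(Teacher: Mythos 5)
Your proof is correct and follows essentially the same route as the paper's: both directions reduce to characteristic functions via the bijection $L_x : G^{d(x)} \to G^{r(x)}$ and concentration of each $\lambda^u$ on its fiber, followed by the standard simple-function/Monotone Convergence bootstrap. The only cosmetic difference is that the paper derives the identity with $x^{-1}$ and then substitutes $x \mapsto x^{-1}$, whereas you choose the set $E = L_x^{-1}(F \cap G^{r(x)})$ directly; your explicit fiber bookkeeping is, if anything, slightly more careful than the paper's ``the converse is obtained by reversing the arguments.''
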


\begin{proof}
Assume $\lambda^{\bullet}$ is left invariant. Fix $x \in G$, and note that $ y \in x \cdot (E \cap G^{d(x)})
\Leftrightarrow x^{-1}y \in E \cap G^{d(x)}$. Therefore, for any Borel set $E$,
\begin{eqnarray*}
\int_G \chi_{_E}(y) d\lambda^{d(x)}(y) &=& \lambda^{d(x)}(E) \ = \ \lambda^{r(x)}\left(x \cdot (E \cap G^{d(x)})\right) \ = \ \int_G \chi_{_{x \cdot (E \cap G^{d(x)})}} (y) d\lambda^{r(x)}(y) \\ &=& \int_G \chi_{_{E \cap G^{d(x)}}} (x^{-1}y) d\lambda^{r(x)}(y) \ = \ \int_G \chi_{_E} (x^{-1}y) d\lambda^{r(x)}(y).
\end{eqnarray*}
Replacing $x$ with $x^{-1}$ we get $\int_G \chi_{_E}(y) d\lambda^{r(x)}(y) \ = \ \int_G \chi_{_E} (xy) d\lambda^{d(x)}(y)$. Passing, as usual, from characteristic functions to any non-negative Borel function, we obtain that for any $x \in G$ and for every non-negative Borel function $f$, $\int_G f(y) d\lambda^{r(x)}(y) \ = \ \int_G f (xy) d\lambda^{d(x)}(y)$ as claimed.

The converse is obtained by reversing the arguments.
\end{proof}

\begin{lem}\label{lem:CSM is left invariant for any f in Cc}
A CSM $\lambda^{\bullet}$ on $G$ is left invariant if and only if for any $x \in G$ and every function $f\in C_c(G)$, $$\int f(xy)d\lambda^{d(x)}(y) = \int f(y)d\lambda^{r(x)}(y).$$
\end{lem}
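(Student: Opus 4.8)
The plan is to leverage Lemma \ref{lem:equivalent left invariance}, which already characterizes left invariance through the integral identity for \emph{all} non-negative Borel functions; what remains is to bridge between non-negative Borel functions and the smaller class $C_c(G)$. The forward implication is the easy one. If $\lambda^{\bullet}$ is left invariant, then by Lemma \ref{lem:equivalent left invariance} the identity $\int f(xy)\,d\lambda^{d(x)}(y) = \int f(y)\,d\lambda^{r(x)}(y)$ holds for every non-negative Borel $f$, in particular for non-negative $f \in C_c(G)$. Since $\lambda^{\bullet}$ is a CSM, hence bounded on compact sets by Lemma \ref{lem:CSM always locally bounded}, every $f \in C_c(G)$ is $\lambda^y$-integrable for all $y$; thus I may split an arbitrary $f \in C_c(G)$ into real and imaginary, then positive and negative parts, and extend the identity by linearity.

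For the converse, fix $x \in G$ and consider the left translation $L_x : G^{d(x)} \to G^{r(x)}$, $y \mapsto xy$, which is a homeomorphism with inverse $L_{x^{-1}}$ by continuity of the groupoid operations. I introduce two Borel measures on $G$: the push-forward $\mu = (L_x)_*\lambda^{d(x)}$ and $\nu = \lambda^{r(x)}$. By the change-of-variables formula for push-forwards, $\int f\,d\mu = \int f(xy)\,d\lambda^{d(x)}(y)$ and $\int f\,d\nu = \int f(y)\,d\lambda^{r(x)}(y)$, so the hypothesis reads precisely $\int f\,d\mu = \int f\,d\nu$ for all $f \in C_c(G)$. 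The goal is to upgrade this to $\mu = \nu$ on all Borel sets, which by the same change of variables yields the integral identity for every non-negative Borel function, and hence left invariance via Lemma \ref{lem:equivalent left invariance}.

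The main obstacle is verifying that $\mu$ and $\nu$ are locally finite, so that Corollary \ref{cor:mu=nu for every A open} applies. For $\nu = \lambda^{r(x)}$ this is immediate, since a CSM is bounded on compact sets. For $\mu$ the point is that $L_x^{-1}(K) = L_{x^{-1}}(K \cap G^{r(x)})$ is the continuous image of the compact set $K \cap G^{r(x)}$ (compact, being a closed subset of $K$), hence compact, so that $\mu(K) = \lambda^{d(x)}(L_x^{-1}(K)) < \infty$ for every compact $K \subseteq G$; this is where continuity of multiplication and inversion, together with local compactness, are used. Once both measures are known to be locally finite, I approximate: for any open set $A \subseteq G$ I choose, exactly as in the proof of Proposition \ref{prop:CSM is BSM}, a non-decreasing sequence $\psi_n \in C_c(G)$ with $0 \le \psi_n \le 1$ and $\psi_n \nearrow \chi_{_A}$ pointwise; the Monotone Convergence Theorem then gives $\mu(A) = \lim_n \int \psi_n\,d\mu = \lim_n \int \psi_n\,d\nu = \nu(A)$. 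Thus $\mu$ and $\nu$ agree on open sets, and Corollary \ref{cor:mu=nu for every A open} forces $\mu = \nu$ on all Borel sets, completing the proof.
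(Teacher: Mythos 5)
Your proof is correct and takes essentially the same route as the paper's: the forward direction via Lemma \ref{lem:equivalent left invariance} together with the decomposition into real/imaginary and positive/negative parts, and the converse via bump-function approximation of indicators of open sets plus the Monotone Convergence Theorem, followed by Corollary \ref{cor:mu=nu for every A open} and a final appeal to Lemma \ref{lem:equivalent left invariance}. The only difference is cosmetic: you compare the push-forward $(L_x)_*\lambda^{d(x)}$ with $\lambda^{r(x)}$ on the $G^{r(x)}$ side (and, to your credit, verify its local finiteness explicitly, a point the paper leaves implicit), whereas the paper compares $\mu_x(E)=\lambda^{d(x)}(E)$ with $\nu_x(E)=\lambda^{r(x)}\left(x \cdot (E \cap G^{d(x)})\right)$ on the $G^{d(x)}$ side --- the same pair of measures transported by the homeomorphism $L_x$.
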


\begin{proof}
Assume first that $\lambda^{\bullet}$ is a left invariant CSM on $G$. By Proposition \ref{prop:CSM is BSM}, $\lambda^{\bullet}$ is a BSM, and by Lemma \ref{lem:equivalent left invariance} we have that for any $x \in G$ and every non-negative Borel function $f$ on $G$, $\int f(xy)d\lambda^{d(x)}(y) = \int f(y)d\lambda^{r(x)}(y).$ In particular this holds for any non-negative $f\in C_c(G)$. The usual decomposition of a general complex-valued $f\in C_c(G)$ as $f=f_1+if_2$, and further as $f_k = (f_k)_+ -(f_k)_-$, yields the property for any $f\in C_c(G)$.

Conversely, if a CSM satisfies $\int f(xy)d\lambda^{d(x)}(y) = \int f(y)d\lambda^{r(x)}(y)$ for any $f\in C_c(G)$, then in particular the property holds for non-negative $f\in C_c(G)$. By approximating characteristic functions of open sets by continuous ``bump" functions and using a standard Monotone Convergence Theorem argument, we obtain that $\int \chi_{_A}(xy)d\lambda^{d(x)}(y) = \int \chi_{_A}(y)d\lambda^{r(x)}(y)$ for any open subset $A \subseteq X$ and any $x \in G$. By means of a calculation similar to that of Lemma \ref{lem:equivalent left invariance}, we deduce that $\lambda^{d(x)} (A) = \lambda^{r(x)}\left(x \cdot (A \cap G^{d(x)})\right)$ for any open subset $A \subseteq X$ and any $x \in G$. Finally, we denote $\mu_x(A) = \lambda^{d(x)} (A)$ and $\nu_x(A) = \lambda^{r(x)}\left(x \cdot (A \cap G^{d(x)})\right)$, and apply Corollary \ref{cor:mu=nu for every A open} to the locally finite measures $\mu_x$ and $\nu_x$. We conclude that $\mu_x(E) = \nu_x(E)$ for any Borel subset $E \subseteq X$. This holds for any $x \in G$, which implies that $\lambda^{\bullet}$ is left invariant.
\end{proof}

\begin{mydef}\label{def:Haar system}
A continuous left \textbf{Haar system} for $G$ is a system of measures $\lambda^{\bullet}$ on $G$ which is continuous, left invariant and positive on open sets.
\end{mydef}

We should point out that in the groupoid literature, the definition of a continuous left Haar system for $G$ appears different than ours at first glance. Modulo minor discrepancies between various sources (see for example standard references such as \cite{muhly-book-unpublished}, \cite{paterson-book}, \cite{renault-book} and \cite{renault-anantharaman-delaroche}), it is usually defined to be a family $\lambda= \{\lambda^u : u \in G^{(0)} \}$ of positive (Radon) measures on $G$ satisfying the following properties:
\begin{enumerate}
\item
$supp(\lambda^u) = G^u$ for every $u \in G^{(0)}$;
\item
(continuity) for any $f \in C_c(G)$, the function $u \mapsto \int f d\lambda^u$ on $G^{(0)}$ is in $C_c(G^{(0)})$;
\item
(left-invariance) for any $x \in G$ and $f \in C_c(G)$, $$\int f(xy)d\lambda^{d(x)}(y) = \int f(y)d\lambda^{r(x)}(y).$$
\end{enumerate}
However, by Lemma \ref{lem:positive iff full support}, Corollary \ref{cor:CSM has compact support} and Lemma \ref{lem:CSM is left invariant for any f in Cc}, the above definition is equivalent to our Definition \ref{def:Haar system}.

\section*{Acknowledgments}

We wish to thank John Baez, Jean Renault, Baruch Solel, Christopher Walker and most of all Paul Muhly, for enlightening conversations and useful suggestions.

\end{document}